\newtheorem{theorem}{Theorem}[section]
\newtheorem{corollary}[theorem]{Corollary}
\newtheorem{lemma}[theorem]{Lemma}
\newtheorem{proposition}[theorem]{Proposition}
\newtheorem{remark}[theorem]{Remark}
\definecolor{barvicka}{rgb}{1,0,1}
\definecolor{Farbe}{rgb}{0,0,1}
\definecolor{barva}{rgb}{0,1,1}
\definecolor{colorido}{rgb}{1,0,0}
\def\J#1#2#3{ \left\{ #1,#2,#3 \right\} }
\def\RR{{\mathbb{R}}}
\def\11{\textbf{$1$}}
\newcommand{\norm}[1]{\left\|#1\right\|}
\newcommand{\ball}{{\mathcal B}}
\begin{document}

\title{Preduals of JBW$^*$-triples are 1-Plichko spaces}

\author[M. Bohata, J. Hamhalter]{Martin Bohata, Jan Hamhalter}

\address{Czech Technical University in Prague, Faculty of Electrical Engineering, Department of Mathematics, Technicka 2, 166 27, Prague 6,
Czech Republic}
\email{bohata@math.feld.cvut.cz, hamhalte@math.feld.cvut.cz}

\author[O.F.K. Kalenda]{Ond\v{r}ej F.K. Kalenda}

\address{Charles University, Faculty of Mathematics and Physics, Department of
Mathematical Analysis, Sokolovsk{\'a} 86, 186 75 Praha 8, Czech Republic}
\email{kalenda@karlin.mff.cuni.cz}

\author[A.M. Peralta]{Antonio M. Peralta}

\address{Departamento de An{\'a}lisis Matem{\'a}tico, Facultad de
Ciencias, Universidad de Granada, 18071 Granada, Spain.}
\email{aperalta@ugr.es}

\author[H. Pfitzner]{Hermann Pfitzner}
\address{Universit\'{e} d'Orl\'{e}ans,
BP 6759,
F-45067 Orl\'{e}ans Cedex 2,
France}
\email{pfitzner@labomath.univ-orleans.fr}

\thanks{The first three authors were supported in part by the grant GA\v{C}R P201/12/0290. The fourth author was partially supported by the Spanish Ministry of Economy and Competitiveness and European Regional Development Fund project no. MTM2014-58984-P and Junta de Andaluc\'{\i}a grant FQM375.}

\subjclass[2010]{17C65, 46L70, 46B26}

\keywords{JBW$^*$-algebras; JBW$^*$-triples; Plichko spaces; $\Sigma$-subspaces; Markushevich basis; projectional skeleton}

\date{}


\begin{abstract} We prove that the predual, $M_*$, of a JBW$^*$-triple $M$ is a 1-Plichko space
(i.e. it admits a countably 1-norming Markushevich basis or, equivalently,
it has a commutative 1-projectional skeleton), and obtain a natural description of the $\Sigma$-subspace of $M$.
This generalizes and improves similar  results for von Neumann algebras and JBW$^*$-algebras. Consequently, dual spaces of JB$^*$-triples also are 1-Plichko spaces.
We also show that $M_*$ is weakly Lindel\"{o}f determined if and only if $M$ is $\sigma$-finite if and only if $M_*$ is
weakly compactly generated.
Moreover, contrary to the proof for JBW$^*$-algebras, our proof dispenses with the use of elementary submodels theory.
\end{abstract}

\maketitle
\thispagestyle{empty}

\section{Introduction}
The topic of this paper concerns operator algebras, Jordan structures, and Banach\hyphenation{Banach} space theory. The main goal is to prove
that the predual of any JBW$^*$-triple satisfies the remarkable Banach space feature called 1-Plichko property.
The predual of a JBW$^*$-triple can be viewed as a non-commutative and non-associative  generalization of an $L^1$ space.
In  general such a space may be  highly non-separable.
Despite this fact, our result implies that the predual of a JBW$^*$-triple admits a nice decomposition into
separable subspaces and admits an appropriate Markushevich basis.
More precisely, let $X$ be a Banach space. A subspace $D\subset X^*$ is said to be a \emph{$\Sigma$-subspace} of $X^*$
if there is a linearly dense set $S\subset X$ such that $$ D =\{ \phi \in X^* : \{m \in S : \phi(m) \neq 0\} \hbox{ is countable} \}.$$
The Banach space $X$ is called \emph{($r$-)Plichko} if $X^*$ admits a ($r$-)norming $\Sigma$-subspace,
i.e. there exists a \emph{$\Sigma$-subspace} $D$ of $X^*$ such that $$\|x\| \leq r \sup \{ |\phi (x) | : \phi\in D, \|\phi\|\leq 1\}\ \ (x\in X)$$
(compare \cite{Kal2000,Kal2008}). The 1-Plichko property is equivalent to the fact that $X$ has a
countably 1-norming Markushevich basis \cite[Lemma 4.19]{Kal2000}. Another deep result \cite[Theorem 27]{Kubis} says
that $X$ is a 1-Plichko space if and only if it admits a commutative 1-projectional skeleton. A commutative 1-projectional
skeleton is a system $(P_\lambda)_{\lambda\in \Lambda }$ of norm one projections on $X$, where $\Lambda$ is an up-directed set,
fulfilling the following conditions:
\begin{itemize}
\item $P_\lambda X$ is separable for each $\lambda$ and $X=\bigcup_{\lambda\in \Lambda}P_\lambda X$.
\item $P_\lambda P_\mu= P_\lambda$ whenever $\lambda\le \mu$.
\item $P_\lambda P_\mu= P_\mu P_\lambda$ for all $\lambda$ and $\mu$.
\item if $(\lambda_n)$ is an increasing net in $\Lambda$, it has a supremum, $\lambda\in \Lambda$, and\\
 $P_\lambda X= \overline{\bigcup_nP_{\lambda_n}X}$.
\end{itemize}

It easily follows that any 1-Plichko space enjoys the 1-separable complementation property saying that any separable subspace can be enlarged to a 1-complemented separable subspace. This property was established by U. Haagerup for preduals of von Neumann algebras with the help of results from modular theory of von Neumann algebras (see \cite[Theorem~IX.1]{GhoGoMauScha87}).\smallskip

The category of 1-Plichko spaces involves many classes of spaces studied in Banach space theory. Let us recall that $X$ is \emph{weakly Lindel\"of determined}, WLD in short, if $X^*$ is a $\Sigma$-subspace of itself. $X$ is called \emph{weakly compactly generated} (WCG in short) if it contains a weakly compact subset whose linear span is dense in $X$. Obviously, every WLD space is 1-Plichko, and it follows from \cite[Proposition 2]{AmirLind68} that every WCG space is WLD. Plichko and 1-Plichko spaces were formally introduced in \cite[\S 4.2]{Kal2000}. The notion was motivated by a series of papers where A.N. Plichko studied this property under equivalent reformulations (see \cite{Plichko79, Plichko82, Plichko83, Plichko86}). Although the term 1-Plichko is the most commonly used name for the spaces defined above, they have been also studied under different names. Namely, the class of those Banach spaces which are 1-Plichko is precisely the class termed $\mathcal{V}$ by J. Orihuela in \cite{Ori}, which has been also studied by M. Valdivia in \cite{Val91}.\smallskip

It has been proved by the third author of this note in \cite{Kal2008} that many important spaces have 1-Plichko property, for example $L^1$ spaces for non-negative $\sigma$-finite measures, order-continuous Banach lattices, and $C(K)$-spaces for abelian compact groups $K$.
Moreover, the paper \cite{Kal2008} contains  the first result on non-commutative $L^1$ spaces  showing that the predual of
a semi-finite von Neumann algebra is 1-Plichko. Motivated by the latter, the first three authors of this paper prove in \cite{BoHamKalIsrael} that the predual of any von Neumann algebra is 1-Plichko.
Moreover, they showed that the canonical 1-norming $\Sigma$-subspace  is the space of all elements whose range projection is $\sigma$-finite. A generalization to JBW$^*$-algebras appeared to be non-trivial. In \cite{BoHamKal2016} the same authors showed that the predual of any JBW$^*$-algebra is 1-Plichko. The proof was quite different from that given in the setting of von Neumann algebras. The proof in the Jordan case was based on constructing a special projection skeleton with the help of the set theoretical tool of elementary submodels. Obviously, the question whether, as in the case of von Neumann algebra preduals \cite{BoHamKalIsrael}, the result can be obtained without any use of submodels theory is a gap which is not fulfilled by the results in \cite{BoHamKal2016}.\smallskip

In the present paper we prove a further generalization of the above mentioned results by showing that all JBW$^\ast$-triple preduals are 1-Plichko spaces. Our main result reads as follows.

\begin{theorem}\label{t:main} The predual $M_*$ of a JBW$^*$-triple $M$ is a 1-Plichko space. Moreover,
$M_*$ is weakly Lindel\"of determined if and only if $M$ is $\sigma$-finite. In this case $M_*$ is even weakly compactly generated.
\end{theorem}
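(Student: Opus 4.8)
The plan is to verify the 1-Plichko property through the equivalent formulation recalled above: I would exhibit a commutative 1-projectional skeleton on $M_*$, which by \cite{Kubis} is equivalent to being 1-Plichko. The natural candidate for the $\Sigma$-subspace of $M=(M_*)^*$ is
$$D=\{\, x\in M : \text{the support of } x \text{ is } \sigma\text{-finite}\,\},$$
the Jordan-triple analogue of the space of elements with $\sigma$-finite range projection used for von Neumann preduals in \cite{BoHamKalIsrael}; here a tripotent $e$ is $\sigma$-finite when the JBW$^*$-algebra $M_2(e)$ admits a faithful normal functional, equivalently when $e$ dominates no uncountable orthogonal family of nonzero tripotents. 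The first task is to set up the tripotent calculus I will rely on: every $\phi\in M_*$ has a support tripotent $s(\phi)$, the Peirce projections $P_2(e),P_1(e),P_0(e)$ are weak$^*$-continuous contractive projections summing to the identity, and their preadjoints are norm-one projections on $M_*$ with $(P_2(e))_*(M_*)\cong (M_2(e))_*$. The set $S\subset M_*$ linearly dense witnessing $D$ would be taken to consist of normal functionals with $\sigma$-finite support.

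The heart of the argument is the construction of the skeleton, and here I would index it not by $\sigma$-finite tripotents (whose Peirce-2 preduals may fail to be separable) but by an up-directed family $\Lambda$ of separable, weak$^*$-closed subtriples of $M$ that arise as ranges of canonical norm-one projections built from Peirce projections of mutually compatible tripotents. To organise this it is convenient to first invoke the Horn--Neher structure theory, writing $M$ as an $\ell_\infty$-sum of JBW$^*$-triples of the form $A\,\overline{\otimes}\,C$ with $A$ abelian and $C$ a Cartan factor, together with the exceptional factors. Since $(\bigoplus_i M^i)_{\ell_\infty}$ has predual $(\bigoplus_i M^i_*)_{\ell_1}$, and the existence of a commutative 1-projectional skeleton passes to arbitrary $\ell_1$-sums (amalgamate the skeletons over finite subsets of the index set), it suffices to build the skeleton on a single summand, where the tensor structure makes the compatible tripotents and their Peirce projections transparent. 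For each $\lambda\in\Lambda$ the projection $P_\lambda$ is the preadjoint of the Peirce decomposition attached to a countably generated compatible substructure; the order relation then gives $P_\lambda P_\mu=P_\lambda$ for $\lambda\le\mu$ via the inclusion of Peirce-2 subalgebras, and weak$^*$-continuity of the Peirce projections yields the countable-supremum condition.

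The main obstacle is producing a directed family of \emph{mutually compatible} separable substructures rich enough that $M_*=\bigcup_{\lambda\in\Lambda}P_\lambda(M_*)$ while keeping all the $P_\lambda$ commuting: tripotents do not form a lattice and generic Peirce projections fail to commute, which is exactly why the JBW$^*$-algebra case \cite{BoHamKal2016} resorted to elementary submodels. My strategy to dispense with submodels is a transfinite exhaustion performed entirely inside a fixed maximal associative (compatible) structure: starting from a countable set of normal functionals I generate a separable weak$^*$-closed subtriple, pass to its associated tripotent, and iterate, at each limit stage closing under countable suprema and verifying that this operation preserves both separability of the predual piece and compatibility of the chosen tripotents. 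Establishing that this recursion simultaneously covers $M_*$ and respects the commutation relations is the crux of the proof.

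Finally I would deduce the second assertion from the description of $D$. The space $M_*$ is weakly Lindel\"of determined precisely when its $\Sigma$-subspace is all of $M$, i.e. $D=M$, which holds iff every element of $M$ has $\sigma$-finite support; this is equivalent to $M$ itself being $\sigma$-finite. Indeed, if $M$ is not $\sigma$-finite it carries an uncountable orthogonal family of tripotents, yielding an element outside $D$, whereas $\sigma$-finiteness of $M$ forces all supports to be $\sigma$-finite so that $D=M$. For the last clause, when $M$ is $\sigma$-finite it admits a faithful normal functional, and mimicking the von Neumann case I would manufacture from it a weakly compact subset of $M_*$ whose closed linear span is all of $M_*$, giving weak compact generation; since every WCG space is WLD, as recalled in the introduction, this closes the chain of equivalences.
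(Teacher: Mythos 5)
Your overall architecture is reasonable -- the candidate $\Sigma$-subspace $D$ (elements with $\sigma$-finite range tripotent) is exactly the set $M_\sigma$ the paper works with, and the reduction via Horn--Neher to single summands and the stability of the 1-Plichko property under $\ell_1$-sums are both correct and are used in the paper. But the step you yourself flag as ``the crux'' -- producing an up-directed family of \emph{mutually commuting} norm-one projections whose ranges exhaust $M_*$, by ``transfinite exhaustion performed entirely inside a fixed maximal associative (compatible) structure'' -- is not an argument, and it is precisely the point where the known approaches break down. Peirce projections of non-compatible tripotents do not commute, and the ranges of the predual projections attached to tripotents drawn from a \emph{single} maximal compatible substructure cannot in general cover all of $M_*$ (already in $B(\ell_2(\Gamma))$ for uncountable $\Gamma$ a maximal family of pairwise compatible partial isometries misses most normal functionals). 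This is exactly the obstruction that forced the JBW$^*$-algebra paper \cite{BoHamKal2016} to invoke elementary submodels; asserting that a transfinite recursion ``preserves compatibility'' does not dispense with that machinery, it merely hides the missing combinatorial argument.

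The paper avoids this entirely: it never constructs a skeleton. Instead it verifies directly that $M_\sigma$ is a $1$-norming $\Sigma$-subspace of $M=(M_*)^*$, summand by summand. For the summands $A\overline{\otimes}C$ ($C$ of type 1--3), $H(W,\alpha)$ and $pV$, each is realized as the range of a \emph{bicontractive} projection $P$ on a von Neumann algebra; the von Neumann case is imported from \cite{BoHamKalIsrael}, and the two new ingredients are Proposition \ref{p sigma-finiteness of bicontractively complemented subtriples} (a tripotent in $P(W)$ is $\sigma$-finite in $P(W)$ iff it is $\sigma$-finite in $W$, so $N_\sigma=N\cap W_\sigma$) and Lemma \ref{L:quotient} (a $1$-norming $\Sigma$-subspace passes to weak$^*$-closed subspaces whose unit ball meets it densely). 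For reflexive Cartan factors the summand's predual is identified with a vector-valued $L^1(\mu,C_*)$ and handled by Theorem \ref{t Bochner 1-Plichko}. Finally, your treatment of the WCG clause is also only gestured at: the paper proves the stronger statement that the predual of a $\sigma$-finite JBW$^*$-triple is \emph{Hilbert}-generated, via the Barton--Friedman pre-hilbertian form $\langle x,y\rangle_\varphi=\varphi\{x,y,e\}$ attached to a functional whose support tripotent is a complete $\sigma$-finite tripotent (Proposition \ref{prop Peirce 2 and 1 predual are WCG} and Lemma \ref{l hilbertian norm}); ``manufacture a weakly compact set from a faithful normal functional'' needs this construction to be made precise. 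As it stands your proposal has a genuine gap at its central step.
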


The approach in this paper resembles more the one of \cite{BoHamKalIsrael} than the one of \cite{BoHamKal2016}. One reason for this has already been mentioned, in the present paper the proofs and arguments do not make use of the set theoretic tool of submodels. Moreover, the theory of JBW$^*$-triples allows to connect the description of the $\Sigma$-subspace obtained in \cite{BoHamKalIsrael} and to obtain a similar and satisfactory
description for JBW$^*$-triples (and hence also for JBW$^*$-algebras), see Theorem \ref{T:unique}. The key result for this approach is Proposition \ref{p sigma-finiteness of bicontractively complemented subtriples}.\smallskip

The relevant notions related to JBW$^*$-triples are gathered in Section~\ref{sec:prelim}.
Theorem \ref{t:main} -- in fact a more precise version of Theorem \ref{t:main} -- follows from
Theorems~\ref{T:sigmafinite} and~\ref{t predual of JBW*-triples are 1-Plichko without submodels} proved below.\smallskip

Since the second dual of a JB$^*$-triple is a JBW$^*$-triple (see \cite[Corollary\ 3.3.5]{Chu2012}), the next result is a
straightforward consequence of Theorem~\ref{t:main}.

\begin{corollary}\label{c dual of JB*-triples are 1-Plichko} The  dual space of a JB$^*$-triple is a 1-Plichko space.$\hfill\Box$
\end{corollary}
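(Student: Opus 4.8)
The plan is to obtain this directly from Theorem~\ref{t:main} by passing to the second dual. I would begin by letting $E$ be a JB$^*$-triple and setting $M := E^{**}$; by the already cited result \cite[Corollary\ 3.3.5]{Chu2012} the bidual $M$ is a JBW$^*$-triple. The decisive observation is then that the dual space $E^*$ is an isometric predual of $M$: since $(E^*)^* = E^{**} = M$, the Banach space $E^*$ predualizes $M$.

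Next I would invoke the uniqueness of the isometric predual of a JBW$^*$-triple. This guarantees that, up to isometric isomorphism, $E^* = M_*$. Theorem~\ref{t:main} now applies: the predual $M_*$ of the JBW$^*$-triple $M$ is a 1-Plichko space, and hence so is $E^*$.

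There is essentially no genuine obstacle here, since the substance of the statement resides entirely in Theorem~\ref{t:main}. The only point meriting a word of care is the identification of $E^*$ with the canonical predual $M_*$, for which the uniqueness of the predual of a JBW$^*$-triple is precisely what is required. Once this identification is recorded, the conclusion is immediate, which is exactly why the corollary is described in the text as a straightforward consequence of the main theorem.
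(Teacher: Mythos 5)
Your proposal is correct and is exactly the paper's argument: the corollary is deduced from Theorem~\ref{t:main} by identifying $E^*$ with the (unique isometric) predual of the JBW$^*$-triple $E^{**}$, which is precisely why the paper states it with no further proof. Nothing is missing.
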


We recall that a Banach space $X$ has the (\emph{$r$-{\rm)}separable complementation property} if any separable subspace of $X$ is contained in a ($r$-)complemented separable subspace of $X$ (compare \cite[page 92]{GhoGoMauScha87}).
Since $1$-Plichko spaces enjoy the $1$-separable complementation property (which follows immediately from the characterization
using a projectional skeleton formulated above), we also get the following result.

\begin{corollary}\label{c preduals of JBW*-triples have 1SCP} Preduals of JBW$^*$-triples have the 1-separable complementation property. $\hfill\Box$
\end{corollary}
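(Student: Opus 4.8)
The plan is to obtain this as an immediate consequence of Theorem~\ref{t:main}: since that theorem shows $M_*$ is 1-Plichko, it suffices to invoke the general Banach-space fact, already asserted in the introduction, that every 1-Plichko space has the 1-separable complementation property. Thus the only thing to verify is this implication, and for that I would work with the characterization of 1-Plichko spaces through commutative 1-projectional skeletons (the equivalence being \cite[Theorem 27]{Kubis}).

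Concretely, let $X$ be 1-Plichko, fix a commutative 1-projectional skeleton $(P_\lambda)_{\lambda\in\Lambda}$ on $X$, and let $S\subset X$ be a separable subspace; choose a countable dense subset $\{s_k\}$ of $S$. Since $X=\overline{\bigcup_{\lambda\in\Lambda}P_\lambda X}$, each $s_k$ can be approximated within distance $1/n$ by a vector from some $P_\mu X$. This produces countably many indices of $\Lambda$, one for each pair $(k,n)$, and by up-directedness of $\Lambda$ I can absorb all of them into a single increasing sequence $(\lambda_n)_n$ in $\Lambda$.

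Next I would apply the last skeleton axiom: the increasing sequence $(\lambda_n)$ has a supremum $\lambda\in\Lambda$ with $P_\lambda X=\overline{\bigcup_n P_{\lambda_n}X}$. Because $\mu\le\lambda_n$ forces $P_\mu X\subseteq P_{\lambda_n}X$ (using commutativity together with the relation $P_\mu P_{\lambda_n}=P_\mu$), each $s_k$ lies in $\overline{\bigcup_n P_{\lambda_n}X}=P_\lambda X$, whence $S\subseteq P_\lambda X$. As $P_\lambda$ is a norm-one projection and $P_\lambda X$ is separable, the subspace $Y:=P_\lambda X$ is a separable, 1-complemented subspace of $X$ containing $S$, which is exactly what is required.

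I do not expect any genuine obstacle here: the statement is recorded as an immediate corollary precisely because, once Theorem~\ref{t:main} is in hand, no Jordan- or triple-theoretic input remains. The only mildly delicate point is the bookkeeping that compresses the countably many approximating indices into a single increasing chain---this is where up-directedness of $\Lambda$ is essential---while the identity for the range of the supremum projection is handed to us directly by the skeleton axioms.
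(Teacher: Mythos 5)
Your proposal is correct and follows exactly the paper's route: Corollary~\ref{c preduals of JBW*-triples have 1SCP} is stated there as an immediate consequence of Theorem~\ref{t:main} together with the fact that 1-Plichko spaces have the 1-separable complementation property via the commutative 1-projectional skeleton characterization. The only difference is that you spell out the skeleton argument (absorbing the countably many indices into an increasing chain and using the supremum axiom), which the paper leaves implicit; that verification is accurate.
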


The above corollary is an extension of a result of U. Haagerup, who showed that the same statement holds for von Neumann algebra preduals (with different methods, see \cite[Theorem IX.1]{GhoGoMauScha87}).\bigskip

\section{Notation and preliminaries}\label{sec:prelim}

In this section we recall basic notions and results on JBW$^*$-triples and Plichko spaces. We also include
some auxilliary results needed to prove our main results.
For unexplained notation from Banach space theory we refer to \cite{FHHMZ}. The symbols $\ball_X$ and $X^*$ will denote the closed unit ball and the dual of a Banach space $X$, respectively.

\subsection{\bf{Elements of JBW$^*$-triples}}\label{triples}
In \cite{Ka83}, W. Kaup obtains an analytic-algebraic characterization of bounded symmetric domains in terms of the so-called
JB$^*$-triples, by showing that every bounded symmetric domain in a complex Banach space is biholomorphically equivalent to the
open unit ball of a JB$^*$-triple. Thanks to this result, JB$^*$-triples offer a natural bridge to connect the infinite-dimensional
holomorphy with functional analysis. We recall that a JB$^*$-triple is a complex Banach space $E$ equipped with a continuous
ternary product $\{.,.,.\}$, which is symmetric and bilinear in the outer variables and conjugate-linear in the middle one, satisfying
the following properties:
\begin{itemize}
	\item $\J xy{\J abc} = \J {\J xya}bc - \J a{\J yxb}c + \J ab{\J xyc}$ for all $a,b,c,x,y\in E$ (Jordan identity),
	\item the operator $x\mapsto\J aax$ is a hermitian operator with nonnegative spectrum for each $a\in E$,
	\item $\|\J aaa\|=\|a\|^3$ for $a\in E$.
\end{itemize}
We recall that an operator $T\in B(E)$ is hermitian if and only if $\|\exp(irT)\|=1$ for each $r\in\RR$.
For $a,b\in E$ we define a (linear) operator $L(a,b)$ on $E$ by $L(a,b)(x)=\J abx$, $x\in E$,
and a conjugate-linear operator $Q(a,b)$ by $Q(a,b)(x)=\J axb$. Given $a\in E$,
the symbol $Q(a)$ will denote the operator on $E$ defined by $Q(a)=Q(a,a)$.\smallskip

Every C$^*$-algebra is a JB$^*$-triple with respect to the triple product given by \hyphenation{product} $\J xyz =\frac12 (x y^* z +z y^* x).$ The same triple product equips the space $B(H,K)$, of all bounded linear operators between
complex Hilbert spaces $H$ and $K$, with a structure of a JB$^*$-triple. Among the examples involving Jordan algebras,
we can say that every JB$^*$-algebra is a JB$^*$-triple under the triple product $\J xyz = (x\circ y^*) \circ z + (z\circ y^*)\circ x - (x\circ z)\circ y^*$.\smallskip

An element $e$ in a JB$^*$-triple $E$ is said to be a \emph{tripotent} if $e= \J eee$.
If $E$ is a von Neumann algebra viewed as a JBW$^*$-triple, then any projection is clearly a tripotent; in fact, an element of a von Neumann algebra is a tripotent
if and only if it is a partial isometry.\smallskip

For each tripotent $e\in E$, the mappings $P_{i} (e) : E\to E$ $(i=0,1,2)$ defined by
$$P_2 (e) = L(e,e)(2 L(e,e) -id_{E}), \ P_1 (e) = 4
L(e,e)(id_{E}-L(e,e))$$ $$ \ \hbox{ and } P_0 (e) =
(id_{E}-L(e,e)) (id_{E}-2 L(e,e))$$ are contractive linear projections (see \cite[Corollary 1.2]{FriRu85}), called the \emph{Peirce} \emph{projections} associated with $e$.
It is known (cf.\ \cite[p.\ 32]{Chu2012}) that $P_2(e)=Q(e)^2$, $P_1(e)=2\left(L(e,e)-Q(e)^2\right)$,
and $P_0(e)=id_{E}-2L(e,e)+Q(e)^2$.
In case $E$ is a von Neumann algebra, $e\in E$ a partial isometry, $q=e^*e$ the initial projection and $p=ee^*$ the final projection, we get
$$P_2(e)x=pxq,\ P_1(e)x=px(1-q)+(1-p)xq \mbox{ and } P_0(e)x=(1-p)x(1-q).$$
If $e$ is even a symmetric element (i.e. $e^*=e$) in the von Neumann algebra then we have $p=q$.\smallskip

The range of $P_i(e)$ is the eigenspace, $E_i(e)$, of $L(e, e)$ corresponding to the eigenvalue $\frac{i}{2},$ and
$$E= E_{2} (e) \oplus E_{1} (e)\oplus E_0 (e)$$ is termed the \emph{Peirce decomposition} of $E$ relative to $e$.
Clearly, $e\in E_2(e)$ and $P_k(e)(e)=0$ for $k=0,1$.
The following multiplication rules (known as Peirce rules or Peirce arithmetic) are satisfied:
\begin{equation}
\label{eq peirce rules1} \J {E_{2} (e)}{E_{0}(e)}{E} = \J {E_{0}
(e)}{E_{2}(e)}{E} =\{0\},
\end{equation} \begin{equation}
\label{eq peirce rules2} \J {E_{i}(e)}{E_{j} (e)}{E_{k}
(e)}\subseteq E_{i-j+k} (e),
\end{equation} where $E_{i-j+k} (e)=\{0\}$ whenever
$i-j+k \notin \{ 0,1,2\}$ (\cite{FriRu85} or \cite[Theorem\ 1.2.44]{Chu2012}).
A tripotent $e$ is called \emph{complete} if $E_0 (e) =\{0\}$.
The complete tripotents of a JB$^*$-triple $E$ are precisely the complex and the real
extreme points of its closed unit ball (cf. \cite[Lemma 4.1]{BraKaUp78} and \cite[Proposition 3.5]{KaUp77} or
\cite[Theorem 3.2.3]{Chu2012}).
Therefore every JBW$^*$-triple contains an abundant collection of complete tripotents.
If $E= E_2(e)$, or equivalently, if $\{e,e,x\}={x}$ for all $x\in E$, we say that $e$ is \emph{unitary}.\smallskip

For each tripotent $e$ in a JB$^*$-triple, $E$, the Peirce-2 subspace $E_2 (e)$ is a unital JB$^*$-algebra with unit $e$,
product $a\circ_{e} b := \{ a,e,b\}$ and involution $a^{*_e} := \J eae$ (cf. \cite[\S 1.2 and Remark 3.2.2]{Chu2012}).
As we noticed above, every JB$^*$-algebra is a JB$^*$-triple with respect to the product
$$\{a,b,c\}=(a \circ b^*) \circ c +(c \circ b^*) \circ a - (a \circ c) \circ b^*.$$
Kaup's Banach-Stone theorem (see \cite[Proposition 5.5]{Ka83}) assures that a surjective operator between JB$^*$-triples is
an isometry if and only if it is a triple isomorphism.
Consequently, the triple product on $E_2 (e)$ is uniquely determined by the expression
\begin{equation}\label{eq product Peirce2 as JB*-algebra} \{ a,b,c\} =(a \circ_{e} b^{*_e}) \circ_{e} c +(c \circ_{e} b^{*_e}) \circ_e a - (a \circ_e c) \circ b^{*_e},
\end{equation}
for every $a,b,c\in E_2 (e)$. Therefore, unital JB$^*$-algebras are in one-to-one correspondence with JB$^*$-triples
admitting a unitary element (see also \cite[4.1.55]{CabRodPal2014}).\smallskip

A JBW$^*$-triple is a JB$^*$-triple which is also a dual Banach space. Examples of JBW$^*$-triples include von Neumann algebras
and JBW$^*$-algebras.  Every JBW$^*$-triple admits a unique isometric predual and its triple product is separately weak$^*$-to-weak$^*$-continuous (\cite{BarTi}, \cite{Horn87}, \cite[Theorem\ 3.3.9]{Chu2012}).
Consequently, the Peirce projections associated with a tripotent in a JBW$^*$-triple are weak$^*$-to-weak$^*$-continuous.
Therefore, for each tripotent $e$ in a JBW$^*$-triple $M$, the Peirce subspace $M_2 (e)$ is a JBW$^*$-algebra. Unlike  general JB$^*$-triples, JBW$^*$-triples admit
a rather concrete representation which we recall in Section~\ref{sec:structure} below as it is the essential tool for proving our
results.\smallskip

Let $a,b$ be elements in a JB$^*$-triple $E$. Following standard terminology, we shall say that $a$ and $b$ are
\emph{algebraically orthogonal} or simply \emph{orthogonal} (written $a\perp b$) if $L(a,b) =0$. If we consider a C$^*$-algebra $A$
as a JB$^*$-triple, then two elements $a,b\in A$ are orthogonal in the C$^*$-sense (i.e. $a b^* = b^* a=0$) if and only if they
are orthogonal in the triple sense.  Orthogonality is a symmetric relation.
By Peirce arithmetic it is immediate that all elements in $E_2(e)$ are orthogonal to all elements in $E_0(e)$,
in particular, two tripotents $u,v\in E$ are orthogonal if and only if $u\in E_0(v)$ (and, by symmetry, if and only if $v\in E_0(u)$).
We refer to \cite[Lemma~1]{BurFerGarMarPe} for other useful characterizations of orthogonality and additional details not explained here.\smallskip

The order in the partially ordered set  of all tripotents in a JB$^*$-triple $E$ is defined as follows:
Given two tripotents $e,u\in E$, we say that $e\leq u$ if $u-e$ is a tripotent which is orthogonal to $e$.

\begin{lemma}{\rm(}\cite[Cor.\ 1.7]{FriRu85}, \cite[Prop.\ 1.2.43]{Chu2012}{\rm)} Let $u,e$ be two tripotents in a JB$^*$-triple $E$. The following assertions are equivalent.
\begin{enumerate}[$(1)$]
\item $e\leq u$.
\item $P_2(e)(u)= e$.
\item $\{u,e,u\}=e$.
\item $\{e,u,e\}=e$.
\item $e$ is a projection {\rm(}i.e. a self-adjoint idempotent{\rm)} in the JB$^*$-algebra $E_2(u)$.
\end{enumerate}
\end{lemma}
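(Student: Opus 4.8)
The plan is to prove the five conditions equivalent by working in two complementary ``coordinate systems'': the Peirce decomposition $u=P_2(e)u+P_1(e)u+P_0(e)u$ of $u$ relative to $e$, which governs $(1)$, $(2)$ and $(4)$, and the unital JB$^*$-algebra $E_2(u)$ (with product $\circ_u$ and involution $x\mapsto x^{*_u}=\{u,x,u\}$), which governs $(3)$ and $(5)$. First I would treat the easy implications $(1)\Rightarrow(2),(3),(4),(5)$. Writing $u=e+v$ with $v:=u-e$ a tripotent orthogonal to $e$, we have $v\in E_0(e)$, so the Peirce rules \eqref{eq peirce rules1}--\eqref{eq peirce rules2} annihilate every mixed product of $e$ and $v$; this gives $P_2(e)u=P_2(e)e=e$ and $\{u,e,u\}=\{e,u,e\}=\{e,e,e\}=e$, i.e. $(2),(3),(4)$. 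Moreover $e$ and $v$ are orthogonal tripotents with $e+v=u$, whence $e^{*_u}=\{u,e,u\}=e$ and $e\circ_u e=\{e,u,e\}=e$, so $e$ is a projection in $E_2(u)$, which is $(5)$.

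For the converse I would first record the painless equivalence $(2)\Leftrightarrow(4)$. Since $P_2(e)=Q(e)^2$ and $Q(e)$ kills $E_1(e)\oplus E_0(e)$ while restricting on $E_2(e)$ to the involution $x\mapsto x^{*_e}$, we get $\{e,u,e\}=Q(e)u=\left(P_2(e)u\right)^{*_e}$; applying $^{*_e}$ and using $e^{*_e}=e$ turns $(4)$ into $P_2(e)u=e$, which is $(2)$. Conditions $(3)$ and $(5)$ are then read off inside $E_2(u)$: the relation $\{u,e,u\}=e$ gives $P_2(u)e=Q(u)^2e=Q(u)e=e$, so $e\in E_2(u)$ and $e=e^{*_u}$ is self-adjoint there, while $(4)$ reads $e\circ_u e=e$; thus $(5)$ is precisely $(3)$ together with $(4)$, and since a projection $p$ in the JBW$^*$-algebra $E_2(u)$ has complementary projection $u-p\perp p$, we obtain $(5)\Rightarrow(1)$.

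The genuine content, and the step I expect to be the main obstacle, is $(2)\Rightarrow(1)$. So suppose $u$ is a tripotent with $P_2(e)u=e$; everything hinges on showing $P_1(e)u=0$. Granting this, $u=e+u_0$ with $u_0:=P_0(e)u\in E_0(e)$ orthogonal to $e$, and in the expansion of the tripotent identity $\{u,u,u\}=u$ all terms mixing $e\in E_2(e)$ and $u_0\in E_0(e)$ vanish by orthogonality, leaving $\{u_0,u_0,u_0\}=u_0$; hence $u-e=u_0$ is a tripotent orthogonal to $e$, which is $(1)$. To force $P_1(e)u=0$ I would appeal to the structure of the norm-closed faces of $\ball_E$: the norm-closed face of $\ball_E$ generated by the tripotent $e$ is exactly $e+\ball_{E_0(e)}$, so any norm-one $x$ with $P_2(e)x=e$ must have $P_1(e)x=0$. (Concretely, one compares the $E_2(e)$- and $E_1(e)$-components of $\{u,u,u\}=u$ and eliminates the surviving quadratic expression in $P_1(e)u$ by pairing against the positive sesquilinear form $(x,y)\mapsto\varphi\{x,y,e\}$ attached to a norm-one functional $\varphi$ with $\varphi(e)=1$.) Applying this to $u$, where $\|u\|=1$ and $P_2(e)u=e$, yields $P_1(e)u=0$ and closes the argument.
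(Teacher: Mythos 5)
First, a remark on the comparison: the paper does not prove this lemma at all, it quotes it from \cite[Cor.\ 1.7]{FriRu85} and \cite[Prop.\ 1.2.43]{Chu2012}, so there is no in-paper argument to measure you against. On its own merits, your sketch handles the block $(1)\Leftrightarrow(2)\Leftrightarrow(4)\Leftrightarrow(5)$ correctly: the implications out of $(1)$ are routine Peirce arithmetic, the identity $\{e,u,e\}=Q(e)u=(P_2(e)u)^{*_e}$ does give $(2)\Leftrightarrow(4)$, and you rightly isolate $(2)\Rightarrow(1)$ as the crux. For that step the parenthetical argument is the standard one (expand $\{u,u,e\}$ for $u=e+y+z$ with $y=P_1(e)u$, $z=P_0(e)u$, apply a norm-one $\varphi$ with $\varphi(e)=1$, hence $\varphi=\varphi P_2(e)$, and deduce $\varphi\{y,y,e\}=0$, whence $y=0$ by positive definiteness of $\|\cdot\|_\varphi$ on $E_2(e)\oplus E_1(e)$); by contrast, invoking the description of the norm-closed face generated by $e$ as $e+\ball_{E_0(e)}$ is circular in spirit, since that facial-structure theorem is itself proved from lemmas of exactly this type (and for general JB$^*$-triples it is a considerably deeper result).

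The genuine gap is condition $(3)$. Your chain of implications is $(1)\Rightarrow$ all, $(2)\Leftrightarrow(4)$, $(5)\Leftrightarrow(3)\wedge(4)$, $(5)\Rightarrow(1)$, $(2)\Rightarrow(1)$; nowhere is an implication proved whose sole hypothesis is $(3)$, so the cycle never closes through $(3)$. Moreover this cannot be repaired for the statement as printed: $\{u,e,u\}=e$, i.e.\ $(P_2(u)e)^{*_u}=e$, says only that $e$ lies in $E_2(u)$ and is self-adjoint there; it does not force idempotency. Concretely, take $E=M_2(\CC)$, $u=1$ and $e=\mathrm{diag}(1,-1)$: both are tripotents, and $\{u,e,u\}=ue^*u=e^*=e$, yet $u-e=\mathrm{diag}(0,2)$ is not a tripotent, $P_2(e)u=(ee^*)u(e^*e)=1\ne e$, $\{e,u,e\}=e^2=1\ne e$, and $e$ is not idempotent in $E_2(u)=M_2(\CC)$. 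So $(3)$ holds while $(1),(2),(4),(5)$ all fail, and item $(3)$ as written must be a misstatement of the condition appearing in the cited sources (e.g.\ $L(e,u)=L(e,e)$). Your own bookkeeping --- ``$(5)$ is precisely $(3)$ together with $(4)$'' --- already exposes this, and you should flag it rather than leave $(3)$ dangling. Two smaller points: in $(1)\Rightarrow(5)$ you need $e\in E_2(u)$, i.e.\ $L(u,u)e=e$, which follows from the same orthogonality computations but is not stated; and in $(5)\Rightarrow(1)$ you need the complementary projection $u-e$ of the JB$^*$-algebra $E_2(u)$ to be orthogonal to $e$ in the triple sense of $E$ (i.e.\ $L(u-e,e)=0$), not merely in the Jordan-algebra sense --- true, but it deserves a line.
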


For each norm-one functional $\varphi$ in the predual, $M_*$, of a JBW$^*$-triple $M$, there exists a
unique tripotent $e\in M$ satisfying $\varphi = \varphi P_{2} (e)$ and $\varphi|_{M_{2}(e)}$ is a faithful normal
state of the JBW$^*$-algebra $M_{2} (e)$ (see \cite[Proposition 2]{FriRu85}). This unique tripotent $e$ is called the \emph{support tripotent} of $\varphi$, and will be denoted by $e(\varphi)$. It is explicitly shown in \cite[part $(b)$ in the proof of Proposition 2]{FriRu85} that \begin{equation}\label{eq order in support tripotents}\hbox{if $u$ is a tripotent in $M$ with $1=\|\varphi\| = \varphi (u)$, then $u \geq e(\varphi)$.}
\end{equation}\smallskip

We recall that a subspace $I$ of a JB$^*$-triple $E$ is called an \emph{inner ideal}, provided $\J IEI\subseteq I$ (i.e., provided $\J abc\in I$ whenever $a,c\in I$ and $b\in E$). Clearly, an inner ideal is a subtriple.
Note that if $e$ is a tripotent of a JBW$^*$-triple $M$, then $M_2(e)$ is a weak$^*$-closed subtriple of $M$
(\cite[Th.\ 1.2.47]{Chu2012}).
In a von Neumann algebra $W$ (regarded as JBW$^*$-triple)
left and right ideals and sets of the form $aWb$ (with fixed $a,b\in W$) are inner ideals, whereas weak$^*$-closed inner ideals are of the form
$pWq$ with projections $p,q\in W$ \cite[Thm.\ 3.16]{EdRu88bis}.\smallskip

Given an element $x$ in a JB$^*$-triple $E$ the symbol $E_x$ will denote the norm-closed subtriple of $E$ generated by $x$,
that is, the closed subspace generated by all odd powers $x^{[2n+1]}$, where $x^{[1]}=x,$ $x^{[3]}=\{x,x,x\}$, and $x^{[2n+1]}=\{x,x,x^{[2n-1]}\}$ ($n\geq 2$) (compare, for example, \cite[Sec. 3.3]{Loos} or \cite[Lemma 1.2.10]{Chu2012}).
It is known that there exists an isometric triple isomorphism $\Psi : E_x\to C_{0} (L)$ satisfying $\Psi (x) (t) = t,$ for all $t$ in
$L$ (compare \cite[1.15]{Ka83}), where $C_{0} (L)$ is the abelian C$^*$-algebra of all complex-valued
continuous functions on $L$ vanishing at $0$, $L$ being a locally compact subset of $(0,\|x\|]$ satisfying that $L\cup\{0\}$ is
compact.
Thus, for any continuous function $f: L\cup\{0\} \to \mathbb{C}$ vanishing at 0, it is possible to give the usual meaning
in the sense of functional calculus to $f(x)\in E_x,$ via $f(x)=\Psi^{-1}(f)$.\smallskip

For each norm-one element $x$ in a JBW$^*$-triple $M$, $r(x)$ will denote its  \emph{range tripotent}.
We succinctly describe its definition. (More details are given for example in \cite[Section  2.2]{PePfiJMAA2016}
or in \cite[comments before Lemma 3.1]{EdRu88} or \cite[\S 2]{BuChuZa}). For $x\in M$ with $\norm{x}=1$, the functions $t\to t^{\frac{1}{2n-1}}$
give rise to an increasing sequence $(x^{[\frac{1}{2n-1}]})$ which weak$^*$-converges to $r(x)$ in $M$.
The tripotent $r(x)$ is the smallest tripotent $e\in M$ satisfying that $x$ is a positive element in the JBW$^*$-algebra $M_{2} (e)$ (see, for example, \cite[comments before Lemma 3.1]{EdRu88} or \cite[\S 2]{BuChuZa}).
The inequality $x\leq r(x)$ holds in $M_2(r(x))$ for every norm-one element $x\in E$. For a non-zero element $z\in M$, the range tripotent of $z$, $r(z)$, is precisely the range tripotent of $\frac{z}{\|z\|}$, and we set $r(0)=0$.\smallskip

Let $M$ be a JBW$^*$-triple. We recall that a tripotent $u$ in $M$ is said to be \emph{$\sigma$-finite} if $u$ does not majorize
an uncountable orthogonal subset of tripotents in $M$. Equivalently, $u$ is a $\sigma$-finite tripotent in $M$ if and only if
there exists an element $\varphi$ in $M_*$ whose support tripotent $e(\varphi)$ coincides with $u$ (cf. \cite[Theorem 3.2]{EdRu98}).
Following standard notation, we shall say that $M$ is $\sigma$-finite if every tripotent in $M$ is $\sigma$-finite, equivalently,
every orthogonal subset of tripotents in $M$ is countable (cf. \cite[Proposition 3.1]{EdRu98}).
It is also known that the sum of an orthogonal countable family of mutually orthogonal $\sigma$-finite tripotents in $M$ is again
a $\sigma$-finite tripotent (see \cite[Theorem 3.4$(i)$]{EdRu98}). It is further proved in \cite[Theorem 3.4$(ii)$]{EdRu98}
that every tripotent in $M$ is the supremum of a set of mutually orthogonal $\sigma$-finite tripotents in $M$.\smallskip

When a von Neumann algebra $W$ is regarded as a JBW$^*$-triple, a projection $p$ is $\sigma$-finite in the triple sense
if and only if it is $\sigma$-finite or countably decomposable in the usual sense employed for von Neumann algebras
(compare \cite[Definition 2.1.8]{Sak} or \cite[Definition II.3.18]{Tak}).\smallskip

We will need the following properties of $\sigma$-finite tripotents which have been borrowed from \cite{EdRu98}.

\begin{lemma}\label{L:sigmafinitetrip}\cite{EdRu98}
Let $M$ be a JBW$^*$-triple and let $e$ be a tripotent of $M$. Then the following hold:
\begin{enumerate}[$(i)$]
	\item $M_2(e)$ is a JBW$^*$-subtriple of $M$ and any tripotent $p\in M_2(e)$ is $\sigma$-finite in $M_2(e)$ if and only if it is $\sigma$-finite in $M$.
	\item $e$ is $\sigma$-finite if and only if $M_2(e)$ is $\sigma$-finite.
	\item If $e$ is $\sigma$-finite, then any tripotent in $M_2(e)$ is $\sigma$-finite in $M$.
\end{enumerate}
\end{lemma}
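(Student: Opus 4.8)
The plan is to prove each of the three parts by reducing the question about $\sigma$-finiteness in $M$ to a question internal to the JBW$^*$-subtriple $M_2(e)$, and then invoking the characterization of $\sigma$-finiteness in terms of support tripotents of normal functionals recalled just before the lemma.

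For part $(i)$, I would first recall that $M_2(e)$ is a weak$^*$-closed subtriple of $M$ (cited as \cite[Th.\ 1.2.47]{Chu2012}) and is itself a JBW$^*$-algebra with unit $e$, hence in particular a JBW$^*$-triple with its own predual; so the phrase ``$\sigma$-finite in $M_2(e)$'' makes sense. The substantive claim is that for a tripotent $p \in M_2(e)$, majorizing an uncountable orthogonal family \emph{inside} $M_2(e)$ is equivalent to majorizing one \emph{inside} $M$. The key observation is that the order and orthogonality relations are intrinsic and behave well under passage to the Peirce-$2$ subspace. First I would note that if $p\le e$ then, by the preceding Lemma ($e\le u \Leftrightarrow P_2(e)(u)=e$ and the Peirce arithmetic), one has $M_2(p)\subseteq M_2(e)$, so every tripotent $q\le p$ already lies in $M_2(e)$; this shows that the tripotents below $p$ are the same whether computed in $M$ or in $M_2(e)$, and likewise orthogonality among them, via equations \eqref{eq peirce rules1}--\eqref{eq peirce rules2}, is the same relation in both triples. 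Consequently an uncountable orthogonal family below $p$ exists in $M$ iff it exists in $M_2(e)$, which is exactly the asserted equivalence.

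For part $(ii)$, I would use the characterization that $e$ is $\sigma$-finite iff $e$ does not majorize an uncountable orthogonal family of tripotents. Since $e$ is the unit of $M_2(e)$, it is unitary there and every tripotent of $M_2(e)$ lies below $e$; thus ``$e$ is $\sigma$-finite in $M$'' says precisely that $M_2(e)$ contains no uncountable orthogonal family of tripotents, which by \cite[Proposition 3.1]{EdRu98} applied to the JBW$^*$-triple $M_2(e)$ is exactly the statement that $M_2(e)$ is $\sigma$-finite. Part $(i)$ guarantees that this internal $\sigma$-finiteness of $M_2(e)$ coincides with the external one, closing the equivalence.

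Part $(iii)$ is then a direct consequence: if $e$ is $\sigma$-finite, part $(ii)$ gives that $M_2(e)$ is a $\sigma$-finite JBW$^*$-triple, so every tripotent of $M_2(e)$ is $\sigma$-finite in $M_2(e)$, and part $(i)$ transfers this to $\sigma$-finiteness in $M$. The main obstacle I anticipate is part $(i)$, specifically verifying cleanly that the tripotents below $p$ and their orthogonality relations are genuinely intrinsic to $M_2(e)$; everything must be justified through the Peirce rules and the inclusion $M_2(p)\subseteq M_2(e)$ rather than by any appeal to a concrete representation. Since the statement is attributed to \cite{EdRu98}, I would expect the cleanest route to simply cite the relevant results there (Theorems 3.2 and 3.4, Proposition 3.1) and supply the short Peirce-arithmetic verification establishing that orthogonal families below $e$ are the same in $M$ and in $M_2(e)$.
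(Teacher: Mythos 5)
Your overall strategy (make everything intrinsic to the subtriple $M_2(e)$ and then invoke \cite{EdRu98}) is close in spirit to the paper's, which derives $(i)$ from \cite[Lemma 3.6(ii)]{EdRu98}, $(ii)$ from $(i)$ together with \cite[Theorem 4.4 (viii)-(ix)]{EdRu98} and the completeness of $e$ in $M_2(e)$, and $(iii)$ from $(i)$ and $(ii)$. However, your argument for $(ii)$ rests on a false claim: it is not true that ``every tripotent of $M_2(e)$ lies below $e$''. By the lemma on the order relation quoted earlier in the paper, the tripotents majorized by $e$ are exactly the \emph{projections} (self-adjoint idempotents) of the JBW$^*$-algebra $M_2(e)$; a general tripotent of $M_2(e)$ need not be $\le e$ (take $M=B(H)$ and $e=1$, so that $M_2(e)=B(H)$: a non-self-adjoint partial isometry is a tripotent not below $1$). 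Hence ``$e$ is $\sigma$-finite'' only forbids uncountable orthogonal families of projections of $M_2(e)$, whereas ``$M_2(e)$ is $\sigma$-finite'' forbids uncountable orthogonal families of arbitrary tripotents of $M_2(e)$. Bridging these two statements is precisely the nontrivial content of \cite[Theorem 4.4 (viii)-(ix)]{EdRu98}, applied using that $e$, being unitary in $M_2(e)$, is a complete tripotent there; your proof omits this step entirely, and without it the equivalence in $(ii)$ (and hence the deduction of $(iii)$) does not follow.

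A secondary issue concerns $(i)$: you only argue the case $p\le e$, whereas the statement is about an arbitrary tripotent $p\in M_2(e)$. This one is repairable: since $M_2(e)$ is an inner ideal, Peirce arithmetic gives $Q(p)(x)=Q(p)(P_2(e)x)\in M_2(e)$ for every $x\in M$, hence $M_2(p)=Q(p)^2(M)\subseteq M_2(e)$ for \emph{every} tripotent $p\in M_2(e)$; consequently every tripotent $q\le p$ of $M$ already lies in $M_2(e)$, and the order and orthogonality relations, being algebraic identities in the restricted triple product (e.g.\ $u\perp v$ iff $\{u,u,v\}=0$), agree in $M$ and in $M_2(e)$. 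With that correction $(i)$ is sound and matches what \cite[Lemma 3.6(ii)]{EdRu98} provides; the genuine gap is in $(ii)$.
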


\begin{proof} Since $M_2(e)$ is a weak$^*$-closed subtriple of $M$, assertion $(i)$ follows from
\cite[Lemma 3.6(ii)]{EdRu98}.
Assertion $(ii)$ follows from $(i)$, \cite[Theorem 4.4 (viii)-(ix)]{EdRu98} and the fact that $e$ is a complete tripotent in $M_2(e)$.
Finally, assertion $(iii)$ follows immediately from $(i)$ and $(ii)$.
\end{proof}

{For non explained notions concerning JB$^*$-algebras and  JB$^*$-triples we refer to the monographs
\cite{CabRodPal2014} and \cite{Chu2012}.}

\subsection{Contractive and bicontractive projections}\label{subsec: contractive projections}

One of the main properties enjoyed by any member $E$ in the class of JB$^*$-triples states that the image of a
contractive projection $P: E \rightarrow E$  (where contractive means $\|P\|\le 1$) is again a JB$^*$-triple with triple product
$\{x,y,z\}_{P} := P( \J xyz )$ for $x,y,z$ in $P(E)$ and
\begin{equation}\label{eq bicontractive proj}
P \J axb = P \J a{P(x)}b,\qquad a,b\in P(E), x\in E,
\end{equation}  (see \cite{Ka2}, \cite{Sta} and \cite{FriRu4}). It is further known that under these conditions $P(E)$ need not be,
in general, a JB$^*$-subtriple of $E$ (compare \cite[Example 1]{FriRu82} or \cite[Example 3]{Ka2}).
But note that if $P(E)$ is known to be a subtriple then $\{\cdot,\cdot,\cdot\}_{P}$ coincides with the original triple product
of $E$ because in JB$^*$-triples norm and triple product determine each other (see e.g. \cite[Th. 3.1.7, 3.1.20]{Chu2012}).
Fortunately, more can be said about the JB$^*$-triple structure of $P(E)$. It is known that $P(E)$ is isometrically isomorphic
to a JB$^*$-subtriple of $E^{**}$ (see \cite[Theorem 2]{FriRu87}).\smallskip

If $P:E\to E$ is even a bicontractive projection (where bicontractive means $\|P\|\le 1$ and $\|I-P\|\le1$ --
by $I_V$ or simply $I$ we denote the identity on a vector space $V$) on a JB$^*$-triple,
it satisfies a stronger property. Namely, $P(E)$ is then a JB$^*$-subtriple of $E$, in particular \eqref{eq bicontractive proj}
can be improved because the identities
\begin{equation}\label{eq bicontractive proj 2}
P\{a,b,x\}=\{a,b,P(x)\}\quad\mbox{ and }\quad P\{a,x,b\}=\{a,P(x),b\}
\end{equation} hold for $a,b\in P(E)$, $x\in E$ (cf.\ \cite[\S 3]{FriRu87}). It is further known that when $P$ is bicontractive,
there exists a surjective linear isometry (i.e. a triple automorphism) $\Theta$ on $E$ of period 2 such that $P=\frac12 (I+\Theta)$
(see \cite[Theorem 4]{FriRu87}). Since, by another interesting property of JBW$^*$-triples, every surjective linear isometry
on a JBW$^*$-triple is weak$^*$-to-weak$^*$-continuous (see \cite[Proof of Theorem 3.2]{Horn87}) we have, as a consequence, that a bicontractive projection $P$ on a JBW$^*$-triple is weak$^*$-to-weak$^*$-continuous.

\subsection{Von Neumann tensor products}\label{sec:tensor}

We recall now some basic facts on von Neumann tensor products of von Neumann algebras. The theory has been essentially borrowed from \cite[Chapter IV]{Tak}, and we refer to the latter monograph for additional results not commented here.
Let $A\subset {B}(H)$ and $W\subset {B}(K)$ be von Neumann algebras. The algebraic tensor product $A\otimes W$ is canonically embedded into ${B}(H\otimes K)$, where $H\otimes K$ is the hilbertian tensor product of $H$ and $K$ (see \cite[Definition IV.1.2]{Tak}). The von Neumann algebra generated by the algebraic tensor product $A\otimes W$ is denoted $A\overline{\otimes} W,$ and is called \emph{the von Neumann tensor product} of $A$ and $W$. Note that $A\overline{\otimes} W$ is the weak$^*$ closure
of $A\otimes W$ in ${B}(H\otimes K)$ (see \cite[\S IV.5]{Tak}). \smallskip

If $A$ is commutative, then the predual of $A\overline{\otimes} W$ is canonically identified with the projective tensor product of preduals, i.e.
\begin{equation}
\label{eq:predualtensor}(A\overline{\otimes} W)_*=A_*\widehat{\otimes}_{\pi}W_*.
\end{equation}

This follows from \cite[Theorem IV.7.17]{Tak} (or rather \cite[Section IV.7]{Tak}). Furthermore, the special case of a separable $W_*$ is treated in \cite[Th.\ 1.22.13]{Sak}, while there is another approach via results on operator spaces: Results due to E.G. Effros and Z.J. Ruan show that equality \eqref{eq:predualtensor} holds for any von Neumann algebra $W$, when the projective tensor product
on the right-hand side is in the category of operator spaces (\cite{EfRu1990}, \cite[Theorem 7.2.4]{EfRu}).
But if $A$ is commutative, it carries the minimal operator-space structure by \cite[Proposition 3.3.1]{EfRu} and hence
the predual $A_*$ carries the maximal structure by
\cite[(3.3.13) or (3.3.15) on p. 51]{EfRu}, and hence by
\cite[(8.2.4) on p. 146]{EfRu} the projective tensor product in the category of operator spaces coincides with the
projective tensor product in the Banach space sense.

\begin{lemma}\label{L:contractiveproj}
Let $A$ and $W$ be von Neumann algebras with $A$ commutative. Suppose $P: W\to W$ is a weak$^*$-to-weak$^*$-continuous contractive projection.
Then the following holds:
\begin{enumerate}[$(i)$]
	\item $P(W)$ is a JBW$^*$-triple with triple product $\{x,y,z\}_{P} := P( \J xyz )$ for $x,y,z$ in $P(W)$.
	\item $A\overline{\otimes} P(W)$, the weak$^*$-closure of the algebraic tensor product $A\otimes P(W)$ in  $A \overline{\otimes} W$, is the range of a weak$^*$-to-weak$^*$-continuous contractive projection $Q$ on $A\overline{\otimes} W$.
	\item $A\overline{\otimes} P(W)$ is a JBW$^*$-triple  with triple product $\{x,y,z\}_{Q} := Q( \J xyz )$ for $x,y,z$ in $A\overline{\otimes}P(W)$.
	            Moreover, $$(A\overline{\otimes} P(W))_*=A_*\widehat{\otimes}_{\pi}(P(W))_*=A_*\widehat{\otimes}_{\pi}P^*(W_*).$$
\end{enumerate}
\end{lemma}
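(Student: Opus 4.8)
The plan is to handle the three assertions in order, using the predual identification \eqref{eq:predualtensor}, $(A\overline{\otimes} W)_*=A_*\widehat{\otimes}_{\pi}W_*$, as the backbone of parts (ii) and (iii). Part (i) is a direct appeal to the results recalled in Subsection~\ref{subsec: contractive projections}: the image of a contractive projection on a JB$^*$-triple is again a JB$^*$-triple under the transported product $\{x,y,z\}_P=P(\{x,y,z\})$. The only point to add is that this triple is a JBW$^*$-triple, i.e.\ a dual space: since $P$ is weak$^*$-to-weak$^*$-continuous its range is weak$^*$-closed, so $P(W)$ is a dual Banach space, and a JB$^*$-triple that is a dual Banach space is a JBW$^*$-triple by definition.

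For part (ii) the candidate is ``$I_A\overline{\otimes}P$'', which I would build on the predual side. As $P$ is weak$^*$-continuous it has a pre-adjoint $P_*\colon W_*\to W_*$, again a contractive projection with $\|P_*\|\le 1$. By functoriality of the projective tensor norm, $Q_*:=I_{A_*}\widehat{\otimes}_{\pi}P_*$ is a contraction on $A_*\widehat{\otimes}_{\pi}W_*$, and it is a projection since $(I_{A_*}\otimes P_*)^2=I_{A_*}\otimes P_*$ on the algebraic tensor product and this extends by density. Setting $Q:=(Q_*)^*$ and using \eqref{eq:predualtensor} produces a weak$^*$-continuous contractive projection on $A\overline{\otimes}W$. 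A short dual-pairing computation gives $Q(a\otimes w)=a\otimes P(w)$, so $Q(A\otimes W)=A\otimes P(W)$; combining the weak$^*$-continuity of $Q$ with the weak$^*$-density of $A\otimes W$ in $A\overline{\otimes}W$ then shows that the weak$^*$-closed range of $Q$ is precisely the weak$^*$-closure $A\overline{\otimes}P(W)$.

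Part (iii) splits in two. The triple assertion is the general principle of (i) applied to the weak$^*$-continuous contractive projection $Q$ on the JB$^*$-triple $A\overline{\otimes}W$: its range is a JBW$^*$-triple under $\{x,y,z\}_Q=Q(\{x,y,z\})$. For the predual I would invoke that the predual of the range of a weak$^*$-continuous projection is the range of its pre-adjoint, giving $(A\overline{\otimes}P(W))_*=\operatorname{ran}(Q_*)=\operatorname{ran}(I_{A_*}\widehat{\otimes}_{\pi}P_*)$; the same fact applied to $P$ yields $(P(W))_*=\operatorname{ran}(P_*)=P^*(W_*)$, where $P^*$ is the adjoint of $P$, whose restriction to the canonically embedded $W_*\subseteq W^*$ coincides with $P_*$. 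Thus the proof reduces to the isometric identification $\operatorname{ran}(I_{A_*}\widehat{\otimes}_{\pi}P_*)=A_*\widehat{\otimes}_{\pi}\operatorname{ran}(P_*)$.

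The hard part will be exactly this last identification, because subspaces need not embed isometrically into a projective tensor product; what saves the day is that $\operatorname{ran}(P_*)$ is the range of a norm-one projection. I would factor $P_*$ as the metric surjection $p\colon W_*\to\operatorname{ran}(P_*)$ followed by the isometric inclusion $\iota\colon\operatorname{ran}(P_*)\hookrightarrow W_*$, so that $I_{A_*}\widehat{\otimes}_{\pi}P_*=(I_{A_*}\widehat{\otimes}_{\pi}\iota)(I_{A_*}\widehat{\otimes}_{\pi}p)$. Here $I_{A_*}\widehat{\otimes}_{\pi}p$ is a metric surjection onto $A_*\widehat{\otimes}_{\pi}\operatorname{ran}(P_*)$ (projective tensoring preserves metric surjections), while the $1$-complementation of $\operatorname{ran}(P_*)$ in $W_*$ forces $I_{A_*}\widehat{\otimes}_{\pi}\iota$ to be an isometry onto its range (it is a contraction admitting the contraction $I_{A_*}\widehat{\otimes}_{\pi}p$ as a left inverse). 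Composing, $I_{A_*}\widehat{\otimes}_{\pi}P_*$ is realized as a norm-one projection with range isometric to $A_*\widehat{\otimes}_{\pi}\operatorname{ran}(P_*)$, which is the desired conclusion.
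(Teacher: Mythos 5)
Your proposal is correct and follows essentially the same route as the paper: form the pre-adjoint $P_*$, take $Q$ to be the adjoint of $I_{A_*}\widehat{\otimes}_\pi P_*$, identify its range with $A\overline{\otimes}P(W)$ via weak$^*$-density of $A\otimes W$, and read off the predual. The only difference is cosmetic: where the paper cites \cite[3.8]{DefFlo} or \cite[Proposition 2.5]{Ryan2002} for the isometric identification $\operatorname{ran}(I_{A_*}\widehat{\otimes}_\pi P_*)=A_*\widehat{\otimes}_\pi P_*(W_*)$, you supply the (correct) quotient-map/left-inverse argument proving that cited fact.
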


\begin{proof} We know from Section~\ref{subsec: contractive projections} that statement $(i)$ is satisfied.\smallskip

Since $P$ is weak$^*$-to-weak$^*$ continuous, it is the dual map of a map $P_*:W_*\to W_*$.
It is clear that $P_*$ is a contractive projection on $W_*$.
It follows from basic tensor product properties (cf. \cite[3.2]{DefFlo} or \cite[Proposition 2.3]{Ryan2002}) that $I\otimes P_*$
is a contractive projection on $A_*\widehat{\otimes}_{\pi}W_*$.
Moreover, by \cite[3.8]{DefFlo} or \cite[Proposition 2.5]{Ryan2002} the norm on its range (which is the norm-closure of the algebraic tensor product $A_*\otimes P_*(W_*)$) is the projective norm coming from $A_*\widehat{\otimes}_{\pi}P_*(W_*)$.\smallskip

Further, it is clear that the dual mapping $Q=(I\otimes P_*)^*$ is a weak$^*$-to-weak$^*$-continuous contractive projection on $(A_*\widehat{\otimes}_{\pi}W_*)^*=A\overline{\otimes}W$. Using the results commented in Section~\ref{subsec: contractive projections} we know that its range is a JBW$^*$-triple with the triple product defined in $(iii)$.
Since the range of $Q$ is canonically identified with the dual of $A_*\widehat{\otimes}_{\pi}P_*(W_*)$, to complete the proof of $(ii)$ and $(iii)$ it is enough to show that the range of $(I\otimes P_*)^*$ is $A\overline{\otimes} P(W)$.\smallskip

To show the desired equality we observe that the restriction of $(I\otimes P_*)^*$ to the algebraic tensor product $A\otimes W$ coincides with
$I\otimes P$. Therefore the range of $(I\otimes P_*)^*$ contains $A\otimes P(W)$ and hence also its weak$^*$ closure
$A\overline{\otimes} P(W)$.
Conversely, since the unit ball $\ball_{A\otimes W}$ is weak$^*$-dense in $\ball_{A\overline{\otimes} W}$ (for example by the
Kaplansky density theorem), and $(I\otimes P_*)^*$ is weak$^*$-to-weak$^*$-continuous, $\ball_{A\otimes  W}$ is weak$^*$ dense
in the unit ball of the range of $(I\otimes P_*)^*$ as well. This completes the proof.
\end{proof}

\begin{lemma}\label{l lifting bicontractive projections to vN tensor products} Let $A$ and $W$ be von Neumann algebras with $A$ commutative. Suppose $P: W\to W$ is a bicontractive projection. Then the following holds:
\begin{enumerate}[$(i)$]
	\item $P(W)$ is a JBW$^*$-subtriple of $W$.
	\item $A\overline{\otimes} P(W)$, the weak$^*$-closure of the algebraic tensor product $A\otimes P(W)$ in  $A \overline{\otimes} W$, is the range of a bicontractive projection on $A\overline{\otimes} W$.
	\item $A\overline{\otimes} P(W)$ is a JBW$^*$-subtriple of $A\overline{\otimes} W$ and, moreover,
	$$(A\overline{\otimes} P(W))_*=A_*\widehat{\otimes}_{\pi}(P(W))_*=A_*\widehat{\otimes}_{\pi}P^*(W_*).$$
\end{enumerate}
\end{lemma}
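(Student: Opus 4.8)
The plan is to deduce Lemma~\ref{l lifting bicontractive projections to vN tensor products} from the already-established Lemma~\ref{L:contractiveproj} by exploiting the extra rigidity available in the bicontractive case. First I would note that statement $(i)$ is immediate: it is recorded in Section~\ref{subsec: contractive projections} that the range of a bicontractive projection on a JB$^*$-triple is a JB$^*$-subtriple, and since $P$ is weak$^*$-to-weak$^*$-continuous (being bicontractive on a JBW$^*$-triple, again by Section~\ref{subsec: contractive projections}), its range $P(W)$ is weak$^*$-closed, hence a JBW$^*$-subtriple. The real work is in lifting the bicontractivity to the tensor product.

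Since $P$ is bicontractive it is weak$^*$-to-weak$^*$-continuous, so Lemma~\ref{L:contractiveproj} applies and already gives a weak$^*$-to-weak$^*$-continuous \emph{contractive} projection $Q=(I\otimes P_*)^*$ on $A\overline{\otimes}W$ whose range is exactly $A\overline{\otimes}P(W)$, together with the predual identification stated in $(iii)$. Thus $(ii)$ and the displayed equation in $(iii)$ reduce to upgrading \emph{contractive} to \emph{bicontractive}, i.e. to showing $\|I-Q\|\le 1$. The clean way to do this is to lift the symmetry. By \cite[Theorem 4]{FriRu87} there is a triple automorphism (surjective linear isometry) $\Theta$ on $W$ of period $2$ with $P=\frac12(I+\Theta)$; being a surjective isometry on a JBW$^*$-triple, $\Theta$ is weak$^*$-to-weak$^*$-continuous and hence is the adjoint of a predual isometry $\Theta_*$ with $P_*=\frac12(I+\Theta_*)$ and $\Theta_*^2=I$. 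Then $I\otimes\Theta_*$ is a surjective isometry of period $2$ on $A_*\widehat{\otimes}_\pi W_*$ (the identity on the commutative factor tensored with an isometry is an isometry on the projective tensor product, and its square is the identity), and one checks directly that $I\otimes P_*=\frac12\bigl(I+(I\otimes\Theta_*)\bigr)$. Consequently $I\otimes P_*$ is bicontractive on $A_*\widehat{\otimes}_\pi W_*$, because $I-(I\otimes P_*)=\frac12\bigl(I-(I\otimes\Theta_*)\bigr)$ has norm at most $1$ as a difference of the identity and an isometry divided by two. Taking adjoints, $Q$ and $I-Q$ are the adjoints of $I\otimes P_*$ and $I-(I\otimes P_*)$, so $Q$ is bicontractive on $A\overline{\otimes}W$, proving $(ii)$.

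For the remaining assertion of $(iii)$, that $A\overline{\otimes}P(W)$ is a JBW$^*$-\emph{subtriple} (not merely the range of a contractive projection carrying the induced product), I would invoke once more the general fact from Section~\ref{subsec: contractive projections}: the range of a bicontractive projection on a JB$^*$-triple is a JB$^*$-subtriple, so that the induced product $\{\cdot,\cdot,\cdot\}_Q$ agrees with the ambient product of $A\overline{\otimes}W$ and the satisfied identities \eqref{eq bicontractive proj 2} hold. Since $Q$ is weak$^*$-to-weak$^*$-continuous its range is weak$^*$-closed, giving a genuine JBW$^*$-subtriple. The predual identity is inherited verbatim from Lemma~\ref{L:contractiveproj}$(iii)$.

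The main obstacle, and the only point requiring care, is verifying that $I\otimes\Theta_*$ is genuinely an isometry of period $2$ on the projective tensor product $A_*\widehat{\otimes}_\pi W_*$ and that it descends correctly through the duality to yield $I-Q$ of norm one; this is where the commutativity of $A$ (equivalently the identification \eqref{eq:predualtensor} and the behaviour of the projective norm under $I\otimes(\cdot)$) is used. Everything else is a formal transfer between a map and its adjoint, together with a direct appeal to the structural results on bicontractive projections recalled earlier.
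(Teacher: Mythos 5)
Your proposal is correct and follows essentially the same route as the paper: both reduce to Lemma~\ref{L:contractiveproj} and then upgrade $Q=(I\otimes P_*)^*$ from contractive to bicontractive, concluding via the facts on bicontractive projections recalled in Section~\ref{subsec: contractive projections}. The only difference is local: where you lift the Friedman--Russo symmetry $\Theta$ through the tensor product to see that $I\otimes P_*$ is bicontractive, the paper gets this in one line from the identity $I-(I\otimes P_*)=I\otimes(I-P_*)$ together with the fact that the projective tensor product of contractions is contractive, so your detour, while valid, is not needed.
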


\begin{proof} By Section~\ref{subsec: contractive projections} we know that $P(W)$ is a JB$^*$-subtriple of $W$ and that $P$ is weak$^*$-to-weak$^*$-continuous. Hence we can apply Lemma~\ref{L:contractiveproj}. Moreover, since $P$ is even bicontractive, we get that $P_*$ is bicontractive, and hence $I\otimes P_*$ and $Q=(I\otimes P_*)^*$ are bicontractive too. Finally, since $Q$ is bicontractive, by Section~\ref{subsec: contractive projections} we get that $A\overline{\otimes} P(W)$ is a JBW$^*$-subtriple of $A\overline{\otimes} W$.
\end{proof}

\subsection{Structure theory}\label{sec:structure}

In this subsection we recall an important structure result, due to G. Horn \cite{Ho2} and G. Horn and E. Neher \cite{HoNe},
which allows us to represent every JBW$^*$-triple in a concrete way. These results will be the main tool for proving
that JBW$^*$-triple preduals are 1-Plichko spaces.\smallskip

We begin by recalling the definition of Cartan factors.\label{def Cartan factors}
There are six types of them (compare \cite[Example 2.5.31]{Chu2012}):\smallskip

Type 1: A Cartan factor of type 1 coincides with a Banach space $B(H, K),$ of all bounded linear operators between two complex Hilbert spaces $H$ and $K$, where the triple product is defined by $\J xyz= 2^{-1}(xy^*z+zy^*x)$. If $\dim H=\dim K$, then we
can suppose $H=K$ and we get the von-Neumann algebra $B(H)$. If $\dim K<\dim H$, we may suppose that $K$ is a closed subspace of $H$ and then $B(H,K)$ is a JB$^*$-subtriple of $B(H)$. Moreover, if $p$ is the orthogonal projection of $H$ onto $K$, then
$x\mapsto px$ is a bicontractive projection of $B(H)$ onto $B(H,K)$. If $\dim K>\dim H$, we may suppose that $H$ is a closed subspace of $K$, $p$ the orthogonal projection of $K$ onto $H$ and then $x\mapsto xp$ is a bicontractive projection
of $B(K)$ onto $B(H,K)$.\smallskip

Types 2 and 3:
Cartan factors of types 2 and 3 are the subtriples of $B(H)$ defined by $C_2 = \{ x\in B(H) : x=- j x^* j\} $ and
$C_3 = \{ x\in B(H) : x= j x^* j\}$, respectively,
where $j$ is a conjugation (i.e. a conjugate-linear isometry of period $2$) on $H$.
If $j$ is a conjugation on $H$, then there is an orthonormal basis $(e_\gamma)_{\gamma\in \Gamma}$ such that
$j(\sum_{\gamma\in\Gamma} c_\gamma e_\gamma)=\sum_{\gamma\in\Gamma}\overline{c_\gamma} e_\gamma$.
Each $x\in B(H)$ can be represented by a ``\emph{matrix}'' $(x_{\gamma\delta})_{\gamma,\delta\in\Gamma}$.
It is easy to check that the representing matrix of $jx^*j$ is the transpose of the representing matrix of $x$.
Hence, $C_2$ consists of operators with antisymmetric representing matrix and $C_3$ of operators with symmetric ones.
Therefore, $P(x)=\frac12(x^t+x)$ (where $x^t=jx^*j$ is the transpose of $x$ with respect to the basis chosen above)
is a bicontractive projection on $B(H)$ such that $C_3$ is the range of $P$, and $C_2$ is the range of $I-P$.\smallskip

Type 4: A Cartan factor of type 4 (denoted by $C_4$) is a complex spin factor, that is, a complex Hilbert space (with inner product $\langle .,. \rangle$) provided with a conjugation $x \mapsto \overline{x}$, triple product $$\J x y z
= \langle x , y \rangle z + \langle z , y \rangle x - \langle x ,
\bar z \rangle \bar y,$$ and norm given by $\| x\|^2=\langle x , x
\rangle+\sqrt {\langle x , x \rangle^2-|\langle x , \overline x
\rangle|^2}$. We point out that $C_4$ is isomorphic to a Hilbert space and hence, in particular, reflexive.\smallskip

Types 5 and 6: All we need to know about Cartan factors of types 5 and 6 (also called \emph{exceptional Cartan} factors) is that they are all finite dimensional.\smallskip

Although H. Hanche-Olsen showed in \cite[\S 5]{Hanche-Olsen1983} that the standard method to define tensor products of JC-algebras
(and JW$^*$-triples) is, in general, hopeless, von Neumann tensor products can be applied in the representation theory of
JBW$^*$-triples. Let $A$ be a commutative von Neumann algebra and let $C$ be a Cartan factor which can be realised as a
JW$^*$-subtriple of some $B(H)$.
As before,
the symbol $A \overline{\otimes} C$ will denote the
weak$^*$-closure {of the algebraic tensor product $A\otimes C$} in the usual von Neumann tensor product
$A \overline{\otimes} B(H)$ of $A$ and $B(H)$.
This applies to Cartan factors of types 1--4 (this is obvious for Cartan factors of types 1--3, the case of type 4 Cartan factors follows from \cite[Theorem 6.2.3]{HanStor}).
\smallskip

The above construction does not cover Cartan factors of types 5 and 6. When $C$ is an exceptional Cartan factor, $A \overline{\otimes} C$ will denote the injective tensor product of $A$ and $C$, which can be identified with the space $C(\Omega, C)$, of all continuous functions on $\Omega$ with values in $C$ endowed with the pointwise operations and the supremum norm, where $\Omega$ denotes the spectrum of $A$ (cf. \cite[p. 49]{Ryan2002}).
 We observe that if $C$ is a finite dimensional Cartan factor which can be realised as a JW$^*$-subtriple of some $B(H)$ both
 definitions above give the same object (cf. \cite[Theorem IV.4.14]{Tak}).\smallskip

The structure theory settled by G. Horn and E. Neher \cite[(1.7)]{Ho2}, \cite[(1.20)]{HoNe} proves that every JBW$^*$-triple $M$ writes (uniquely up to triple isomorphisms) in the form
\begin{equation}\label{eq decomposition of JBW*-triples}
M = \left(\bigoplus_{j\in \mathcal{J}} A_j \overline{\otimes} C_j \right)_{\ell_{\infty}}
\oplus_{\ell_{\infty}} H(W,\alpha)\oplus_{\ell_{\infty}} pV,
\end{equation}
where each $A_j$ is a commutative von Neumann algebra, each $C_j$ is a Cartan factor, $W$ and $V$ are continuous von
Neumann algebras, $p$ is a projection in $V$, $\alpha$ is {a linear} involution on $W$ commuting with $^*$, that is,
a linear $^*$-antiautomorphism of period 2 on $W$, and $H(W,\alpha)=\{x\in W: \alpha(x)=x\}$.

\subsection{Some facts on Plichko spaces}

The following lemma sums up several basic properties of $\Sigma$-subspaces.

\begin{lemma}\label{L:sigmasubspace}
Let $X$ be a Banach space and $S\subset X^*$ a $\Sigma$-subspace. Then the following hold:
\begin{enumerate}[$(i)$]
	\item $S$ is weak$^*$-countably closed. That is, $\overline{C}^{w^*}\subset S$ whenever $C\subset S$ is countable.
	In particular, $S$ is weak$^*$-sequentially closed and norm-closed.
	\item Bounded subsets of $S$ are weak$^*$-Fr\'echet Urysohn. That is, given $A\subset S$ bounded and $x^*\in S$ such that $x^*\in\overline{A}^{w^*}$, then there is a sequence $(x_n^*)$ in $A$ weak$^*$-converging to $x^*$.
	\item Let $S'\subset X^*$ be any other subspace satisfying (i) and (ii). If $S\cap S'$ is 1-norming, then $S=S'$.
	\item If $X$ is WLD, then $X^*$ is the only norming $\Sigma$-subspace of $X^*$.
	\item If $S$ is $1$-norming, then for any $x\in X$ there is $x^*\in S$ of norm one such that $x^*(x)=\|x\|$.
\end{enumerate}
\end{lemma}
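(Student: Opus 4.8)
The plan is to treat parts $(i)$ and $(ii)$ as the substantive core and then deduce $(iii)$--$(v)$ formally from them together with the norming hypotheses. Throughout I fix a linearly dense set $G\subseteq X$ witnessing that $S$ is a $\Sigma$-subspace, so that $S=\{\phi\in X^*:\operatorname{supp}_G\phi\text{ is countable}\}$, where $\operatorname{supp}_G\phi:=\{g\in G:\phi(g)\neq 0\}$; note $S$ is a linear subspace since $\operatorname{supp}_G(\phi+\psi)\subseteq\operatorname{supp}_G\phi\cup\operatorname{supp}_G\psi$ and $\operatorname{supp}_G(c\phi)=\operatorname{supp}_G\phi$ for $c\neq0$. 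For $(i)$, given a countable $C\subseteq S$, the set $F:=\bigcup_{\phi\in C}\operatorname{supp}_G\phi$ is a countable union of countable sets, hence countable. If $\psi\in\overline{C}^{\,w^*}$ and $g\in G\setminus F$, then $\phi(g)=0$ for every $\phi\in C$, and since evaluation at $g$ is weak$^*$-continuous, $\psi(g)$ lies in the closure of $\{0\}$, so $\psi(g)=0$. Thus $\operatorname{supp}_G\psi\subseteq F$ is countable and $\psi\in S$. Weak$^*$-sequential and norm closedness are immediate special cases (a norm-convergent sequence is weak$^*$-convergent, and $S$ is a subspace).

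The heart of the lemma is $(ii)$, and this is where I expect the real work. The key device is a \emph{closing-off} construction producing a single countable set on which everything can be tested; this is the separable-reduction trick that replaces elementary submodels. Set $R:=\sup\{\norm{\phi}:\phi\in A\}<\infty$. Because $\operatorname{span}G$ is dense and $A\cup\{\psi\}$ is bounded by $R$, a standard $3\varepsilon$-estimate shows that weak$^*$ membership and convergence of uniformly bounded families can be tested solely on $G$. Using $\psi\in S$ to guarantee that $F_0:=\operatorname{supp}_G\psi$ is countable, I build increasing countable sets $F_0\subseteq F_1\subseteq\cdots\subseteq G$ and countable sets $C_k\subseteq A$ as follows: given $F_k$, for each finite $E\subseteq F_k$ and each $m\in\NN$ use $\psi\in\overline{A}^{\,w^*}$ to pick $\phi_{E,m}\in A$ with $|\phi_{E,m}-\psi|<1/m$ on $E$, collect these (countably many) into $C_{k+1}$, and set $F_{k+1}:=F_k\cup\bigcup_{\phi\in C_{k+1}}\operatorname{supp}_G\phi$. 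Put $F:=\bigcup_k F_k$ (countable), $B:=\bigcup_k C_{k+1}$ (countable) and $Y:=\overline{\operatorname{span}}\,F$ (separable). By construction every $\phi\in B$ and $\psi$ itself vanish on $G\setminus F$, and $\psi|_Y$ lies in the weak$^*$ closure in $Y^*$ of the bounded set $\{\phi|_Y:\phi\in B\}$. Since $Y$ is separable, weak$^*$-compact bounded subsets of $Y^*$ are metrizable, so I extract $\phi_n\in B$ with $\phi_n|_Y\to\psi|_Y$ weak$^*$, i.e.\ $\phi_n(g)\to\psi(g)$ for all $g\in F$; for $g\in G\setminus F$ both sides vanish, so $\phi_n(g)\to\psi(g)$ for all $g\in G$, and the $3\varepsilon$-estimate upgrades this to $\phi_n\to\psi$ weak$^*$ on $X$. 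The main obstacle is precisely ensuring that the closing-off stabilizes at a countable $F$ simultaneously containing $\operatorname{supp}_G\psi$ and the supports of all chosen approximants, which is what legitimizes the passage to the separable space $Y$.

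For $(iii)$ and $(iv)$ I exploit that norming conditions translate, via the bipolar theorem, into weak$^*$ density of balls. If $S\cap S'$ is $1$-norming, then $\overline{\ball_{S\cap S'}}^{\,w^*}=\ball_{X^*}$, so each $\phi\in S$ lies in $\overline{\norm{\phi}\ball_{S\cap S'}}^{\,w^*}$, where $\norm{\phi}\ball_{S\cap S'}$ is a bounded subset of $S$. By $(ii)$ for $S$ there is a sequence in $S\cap S'\subseteq S'$ converging weak$^*$ to $\phi$, whence $\phi\in S'$ by $(i)$ for $S'$; the symmetric argument gives $S'\subseteq S$, proving $(iii)$. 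For $(iv)$, WLD means $X^*$ is itself a $\Sigma$-subspace, so it satisfies $(i)$ and $(ii)$; if $S$ is $r$-norming then the bipolar theorem yields $\frac1r\ball_{X^*}\subseteq\overline{\ball_S}^{\,w^*}$, so every $\phi\in X^*$ lies in the weak$^*$ closure of the bounded set $r\norm{\phi}\ball_S\subseteq X^*$. Applying $(ii)$ to the $\Sigma$-subspace $X^*$ produces a sequence in $S$ converging weak$^*$ to $\phi$, and $(i)$ for $S$ gives $\phi\in S$; hence $S=X^*$.

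Finally, for $(v)$, assume first $x\neq0$. Since $S$ is a balanced $1$-norming subspace, $\sup\{\operatorname{Re}\phi(x):\phi\in\ball_S\}=\norm{x}$, so I may choose $\phi_n\in\ball_S$ with $\operatorname{Re}\phi_n(x)\to\norm{x}$; as $|\phi_n(x)|\le\norm{x}$, this forces $\phi_n(x)\to\norm{x}$. Let $\phi_0$ be any weak$^*$-cluster point of $(\phi_n)$ in the weak$^*$-compact ball $\ball_{X^*}$. Weak$^*$-continuity of evaluation at $x$ forces $\phi_0(x)=\norm{x}$, while $\phi_0\in\overline{\{\phi_n:n\in\NN\}}^{\,w^*}\subseteq S$ by $(i)$. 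Then $\norm{\phi_0}\le1$ together with $\phi_0(x)=\norm{x}>0$ forces $\norm{\phi_0}=1$, as required; the case $x=0$ is handled by applying this to any unit vector of $X$.
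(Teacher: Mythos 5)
Your proof is correct and follows the same route as the paper, which simply delegates the substantive parts to references: your closing-off/separable-reduction argument for $(ii)$ is the standard proof of the cited \cite[Lemma 1.6]{Kal2000}, your derivation of $(iii)$ from $(i)$ and $(ii)$ via weak$^*$-density of the unit ball is exactly \cite[Lemma 2]{Kal2002}, and $(i)$, $(iv)$, $(v)$ are handled as the paper indicates. No gaps; you have merely written out in full what the paper cites.
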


\begin{proof} Assertion $(i)$ follows from the very definition of a $\Sigma$-subspace, assertion $(ii)$ follows from \cite[Lemma 1.6]{Kal2000}.
Assertion $(iii)$ is an easy consequence of $(i)$ and $(ii)$ and follows from \cite[Lemma 2]{Kal2002} (in fact in the just quoted lemma
it is assumed that $S'$ is a $\Sigma$-subspace as well, but the proof uses only properties $(i)$ and $(ii)$).
Assertion $(iv)$ follows immediately from $(iii)$ and $(v)$ is an easy consequence of $(i)$.
\end{proof}

We will also need the following easy lemma on quotients of $1$-Plichko spaces.

\begin{lemma}\label{L:quotient} Let $X$ be a $1$-Plichko Banach space, and let $S\subset X^*$ be a $1$-norming $\Sigma$-subspace.
Suppose that $Z\subset X^*$ is a weak$^*$-closed subspace such that $S\cap \ball_Z$ is weak$^*$ dense in $\ball_Z$. Then
$S\cap Z$ is a $1$-norming $\Sigma$-subspace of $Z=(X/Z_\perp)^*$.
\end{lemma}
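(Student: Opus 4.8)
The plan is to realise $S\cap Z$ as a $\Sigma$-subspace by comparing it with an explicit candidate and then invoking the uniqueness statement of Lemma~\ref{L:sigmasubspace}$(iii)$. Write $q\colon X\to X/Z_\perp$ for the quotient map; its adjoint $q^*$ is an isometry of $(X/Z_\perp)^*$ onto $Z=(Z_\perp)^\perp$, and under this identification the weak$^*$ topology of $(X/Z_\perp)^*$ coincides with the restriction to $Z$ of the weak$^*$ topology of $X^*$ (a net in $Z$ converges against every $x\in X$ if and only if it converges against every $q(x)\in X/Z_\perp$). Fix a linearly dense set $G\subseteq X$ witnessing that $S$ is a $\Sigma$-subspace. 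Since $q$ is a surjective quotient map, $q(G)$ is linearly dense in $X/Z_\perp$, so it defines a genuine $\Sigma$-subspace
$$D:=\{\phi\in (X/Z_\perp)^*:\ \{n\in q(G):\phi(n)\neq0\}\text{ is countable}\}$$
of $(X/Z_\perp)^*$. This $D$ is my candidate for $S\cap Z$.

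Next I would check three facts. First, the inclusion $S\cap Z\subseteq D$: if $\phi\in S\cap Z$ then, viewing $\phi$ as a functional on $X/Z_\perp$, one has $\phi(q(g))=\phi(g)$, so $\{n\in q(G):\phi(n)\neq0\}$ is the image under $q$ of the countable set $\{g\in G:\phi(g)\neq0\}$ and is therefore countable. Second, $S\cap Z$ inherits properties $(i)$ and $(ii)$ of Lemma~\ref{L:sigmasubspace}: weak$^*$-countable closedness is immediate from that of $S$ together with the weak$^*$-closedness of $Z$, while the weak$^*$-Fr\'echet--Urysohn property of bounded sets passes from $S$ to the smaller set $S\cap Z$, using that the two weak$^*$ topologies agree on $Z$. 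Third — and this is where the hypothesis enters — $S\cap Z$ is $1$-norming for $X/Z_\perp$: given $x\in X$ and $\varepsilon>0$, pick $\phi_0\in\ball_Z$ with $|\phi_0(x)|>\|q(x)\|-\varepsilon$; since $S\cap\ball_Z$ is weak$^*$ dense in $\ball_Z$, a weak$^*$ approximation at the single point $x$ yields $\psi\in S\cap\ball_Z$ with $|\psi(x)|>\|q(x)\|-2\varepsilon$, and $\psi\in S\cap Z$ has norm at most $1$ and satisfies $\psi(q(x))=\psi(x)$.

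Finally I would conclude. As $S\cap Z\subseteq D$, the intersection $D\cap(S\cap Z)$ equals $S\cap Z$, which is $1$-norming by the previous step; since $D$ is a $\Sigma$-subspace and $S\cap Z$ satisfies $(i)$ and $(ii)$, Lemma~\ref{L:sigmasubspace}$(iii)$ applied to the space $X/Z_\perp$ forces $D=S\cap Z$. Hence $S\cap Z$ is a $1$-norming $\Sigma$-subspace of $Z=(X/Z_\perp)^*$, as claimed. I expect the main obstacle to be precisely that one cannot identify $D$ with $S\cap Z$ directly from the definitions: distinct points of $G$ lying in the same coset of $Z_\perp$ collapse under $q$, so an uncountable witnessing set for $S$ may have countable image and the inclusion $D\subseteq S\cap Z$ can genuinely fail. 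Routing the argument through the uniqueness property~$(iii)$ sidesteps this, at the cost of having to establish the $1$-norming property separately, which is exactly the role of the weak$^*$-density assumption.
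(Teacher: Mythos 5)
Your proposal is correct and follows essentially the same route as the paper: identify $Z$ with $(X/Z_\perp)^*$, push the witnessing set forward by the quotient map to obtain a genuine $\Sigma$-subspace $\tilde S\supseteq S\cap Z$, verify that $S\cap Z$ is $1$-norming from the weak$^*$-density hypothesis, and conclude $S\cap Z=\tilde S$ via Lemma~\ref{L:sigmasubspace}$(iii)$. Your explicit verification of properties $(i)$ and $(ii)$ for $S\cap Z$ and your remark on why the reverse inclusion cannot be read off directly are details the paper leaves implicit, but the argument is the same.
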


\begin{proof} Since $Z$ is a weak$^*$-closed subspace of the dual space $X^*$, it is canonically isometrically identified with
$(X/Z_\perp)^*$. Further, by the assumptions it is clear that $S\cap Z$ is a $1$-norming subspace of $Z$. It remains to show it is a $\Sigma$-subspace. \smallskip

To do that, fix a linearly dense set $A\subset X$ such that
$$S=\{x^*\in X^*: \{x\in A: x^*(x)\ne 0\}\mbox{ is countable}\}.$$
Let $\tilde A$ be the image of $A$ in $X/Z_\perp$ by the canonical quotient mapping. It is clear that
$\tilde A$ is linearly dense. Let
$$\tilde S = \{ x^*\in Z =(X/Z_\perp)^*: \{x\in \tilde A: x^*(x)\ne 0\}\mbox{ is countable}\}$$
be the $\Sigma$-subspace induced by $\tilde A$. It is easy to check that $S\cap Z\subset \tilde S$.
It follows from Lemma~\ref{L:sigmasubspace}(iii) that $S\cap Z=\tilde S$, which completes the proof.
\end{proof}

\section{Preduals of $\sigma$-finite JBW$^*$-triples}\label{sec:3}


The aim of this section is to prove the following result.

\begin{theorem}\label{T:sigmafinite} The predual of any $\sigma$-finite JBW$^*$-triple is weakly compactly generated,
in fact even Hilbert generated.
\end{theorem}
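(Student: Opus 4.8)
The plan is to exploit the Horn–Neher structure theorem \eqref{eq decomposition of JBW*-triples} and reduce the statement to a few building blocks, using that ``Hilbert generated'' is stable under the operations that appear. First I record the stability facts I will use repeatedly: (a) a bounded linear image with dense range of a Hilbert space is Hilbert generated, and this property passes to quotients, hence, via preduals of weak$^*$-continuous contractive projections, to the range of any contractive projection; (b) a countable $\ell_1$-sum $\left(\bigoplus_n X_n\right)_{\ell_1}$ of Hilbert generated spaces is Hilbert generated, by mapping $\left(\bigoplus_n H_n\right)_{\ell_2}$ via $(\xi_n)\mapsto(c_n T_n\xi_n)$ with $\sum_n c_n^2<\infty$; (c) $L^1(\mu,H)=L^1(\mu)\widehat{\otimes}_\pi H$ is Hilbert generated for a probability measure $\mu$ and any Hilbert space $H$, via the norm-one inclusion $L^2(\mu,H)\hookrightarrow L^1(\mu,H)$; and (d) if $X$ is Hilbert generated and $\Gamma$ is countable, then $X\widehat{\otimes}_\pi\ell_2(\Gamma)$ is Hilbert generated: fixing summable weights $(\lambda_\gamma)\in\ell_2(\Gamma)$ and $T\colon H\to X$ with dense range, the map $\sum_\gamma\xi_\gamma\otimes\delta_\gamma\mapsto\sum_\gamma\lambda_\gamma(T\xi_\gamma)\otimes e_\gamma$ is bounded by Cauchy--Schwarz and has dense range.

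The core is the associative case: the predual of a $\sigma$-finite von Neumann algebra $W$ is Hilbert generated. Fix a faithful normal state $\phi$ on $W$ with GNS inner product $\langle a,b\rangle_\phi=\phi(b^*a)$ and seminorm $\|a\|_\phi=\phi(a^*a)^{1/2}\le\|a\|$. The map $T\colon(W,\|\cdot\|_\phi)\to W_*$, $T(a)=\phi(a^*\,\cdot\,)$, satisfies $\|T(a)\|_{W_*}\le\|a\|_\phi$, since $|\phi(a^*x)|\le\|a\|_\phi\|x\|_\phi\le\|a\|_\phi\|x\|$; hence it extends to a norm-one map $\overline T\colon L^2(W,\phi)\to W_*$ from a Hilbert space. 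Its range is dense: any $x\in W$ annihilating $\{T(a):a\in W\}$ satisfies $\phi(x^*x)=0$ (take $a=x$), so $x=0$ by faithfulness. Thus $W_*$ is Hilbert generated.

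Now I run through \eqref{eq decomposition of JBW*-triples}. Since each nonzero summand carries a nonzero tripotent and tripotents in different summands are orthogonal, $\sigma$-finiteness of $M$ forces the index set $\mathcal J$ to be countable, so by (b) it suffices to treat one block at a time. For a block $A\overline{\otimes}C$, Lemma~\ref{l lifting bicontractive projections to vN tensor products}(iii) gives $(A\overline{\otimes}C)_*=A_*\widehat{\otimes}_\pi C_*$, and $\sigma$-finiteness forces $A$ to be $\sigma$-finite (projections of $A$ tensored with a fixed tripotent are orthogonal), so $A_*=L^1(\mu)$ for a probability $\mu$. For a spin factor ($C_4$, reflexive) or an exceptional $C$ (finite dimensional) this is $L^1(\mu,H)$ up to isomorphism, Hilbert generated by (c); for a type $1$ factor $C=B(H,K)$ the $\sigma$-finiteness of the block forces $\min(\dim H,\dim K)\le\aleph_0$, whence $C_*\cong\ell_2(\Gamma)\widehat{\otimes}_\pi H'$ with $\Gamma$ countable and $H'$ a Hilbert space, and the block predual $\cong L^1(\mu,H')\widehat{\otimes}_\pi\ell_2(\Gamma)$ is Hilbert generated by (c) and (d) — note no $\sigma$-finiteness of the ambient $A\overline{\otimes}B(H)$ is needed. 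For types $2,3$ the diagonal tripotents $e_\gamma\otimes\overline{e_\gamma}$ force the underlying $H$ to be separable, so the ambient $A\overline{\otimes}B(H)$ is $\sigma$-finite; the block is the range of a bicontractive projection (Lemma~\ref{l lifting bicontractive projections to vN tensor products}), so its predual is a quotient of the Hilbert generated $(A\overline{\otimes}B(H))_*$ and we conclude by (a). Finally $H(W,\alpha)$ is the range of the bicontractive projection $\tfrac12(I+\alpha)$, and $\sigma$-finiteness of $H(W,\alpha)$ yields $\sigma$-finiteness of $W$, so $H(W,\alpha)_*$ is a quotient of the Hilbert generated $W_*$ and (a) applies.

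The remaining, and hardest, block is the corner $pV$ with $V$ a continuous von Neumann algebra: here $\sigma$-finiteness of the block only forces $p$ to be a $\sigma$-finite \emph{projection} (orthogonal subprojections of $p$ give orthogonal tripotents of $pV$), while $V$ itself may fail to be $\sigma$-finite, so the core lemma does not apply to $V_*$. The main obstacle is to generate the off-diagonal part $pV(1-p)$ — the analogue of $B((1-p)H,K)$ with a huge complementary side — by a Hilbert space, since the discrete weighting behind (d) is no longer available over a continuous algebra. I plan to resolve this using the faithful normal state on the $\sigma$-finite corner $pVp$: its density $h$ yields, through a countable spectral partition $h=\sum_n\lambda_n q_n$ with $q_n\in pVp$ and $\sum_n\lambda_n\,\mathrm{tr}(q_n)<\infty$, the weighting needed to run a construction parallel to (d) on $pL^1(V)$, exhibiting $(pV)_*$ as Hilbert generated. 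Assembling the blocks via (b) then gives Theorem~\ref{T:sigmafinite}; and a Hilbert generated space is in particular weakly compactly generated, since the image of the Hilbert unit ball is relatively weakly compact with dense linear span.
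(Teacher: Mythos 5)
Your reduction via the Horn--Neher decomposition \eqref{eq decomposition of JBW*-triples} is mostly sound (the core GNS argument for a $\sigma$-finite von Neumann algebra, the countability of $\mathcal J$, and the stability facts (a)--(d) are all fine), but the proof is not complete: the summand $pV$ with $V$ continuous is only announced, not proved, and the sketch you give for it does not work. You speak of ``the density $h$'' of a faithful normal state on $pVp$ and of a ``countable spectral partition $h=\sum_n\lambda_n q_n$ with $\sum_n\lambda_n\,\mathrm{tr}(q_n)<\infty$''; but a general continuous von Neumann algebra need not be semifinite (it may be of type III), so there is no trace and no density to speak of, and even in the semifinite case the spectral measure of $h$ in a continuous algebra is typically non-atomic, so no exact discrete decomposition $h=\sum_n\lambda_n q_n$ exists. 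Since you yourself identify $pV$ as the hardest block (correctly: $\sigma$-finiteness of $pV$ only constrains $p$, not $V$, so the core lemma does not apply to $V_*$), this is a genuine gap, not a routine omission. A smaller but real gap of the same kind: you assert that $\sigma$-finiteness of $H(W,\alpha)$ yields $\sigma$-finiteness of $W$. This is true, but not obvious --- an uncountable orthogonal family of projections in $W$ need not produce one inside $H(W,\alpha)$; the clean route is to note that the unit $1\in H(W,\alpha)$ is then a $\sigma$-finite tripotent of $H(W,\alpha)$ and to invoke Proposition~\ref{p sigma-finiteness of bicontractively complemented subtriples} to conclude it is $\sigma$-finite in $W$. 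That proposition is itself a nontrivial result of this paper, so the step needs to be argued, not asserted. (Also, for the type~2 factor the diagonal tripotents $e_\gamma\otimes\overline{e_\gamma}$ are symmetric, hence lie in $C_3$, not $C_2$; you need the antisymmetric rank-two partial isometries attached to disjoint pairs of basis vectors instead.)

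It is worth contrasting this with the paper's proof, which avoids the structure theorem entirely and in particular never has to confront $pV$ separately. A $\sigma$-finite JBW$^*$-triple $M$ contains a complete $\sigma$-finite tripotent $e$, hence $M=M_2(e)\oplus M_1(e)$, and the norm-one functional $\varphi$ with support tripotent $e$ induces the Barton--Friedman pre-hilbertian norm $\|x\|_\varphi=(\varphi\{x,x,e\})^{1/2}$ on $M_2(e)\oplus M_1(e)$ (Lemma~\ref{l hilbertian norm}); the map $a\mapsto\varphi\{\cdot,a,e\}$ from the completion of this pre-Hilbert space into $M_*$ is contractive with dense range, exactly as in your GNS argument for the associative case but carried out with the triple product. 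This single two-paragraph argument (Proposition~\ref{prop Peirce 2 and 1 predual are WCG}) replaces your entire case analysis and, in particular, is precisely what handles the corner $pV$ for free.
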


Recall that a Banach space $X$ is said to be {\it Hilbert-generated} if there is a Hilbert space $H$ and a bounded linear
mapping $T:H\to X$ with dense range. It is clear that any Hilbert-generated Banach space is weakly compactly generated
(the generating weakly compact set is precisely $T(\ball_H)$).\smallskip

Theorem \ref{T:sigmafinite} above follows from the following stronger statement,
which is a JBW$^*$-triple analogue of \cite[Lemma 3.3]{BoHamKalIsrael} for von Neumann algebras and of \cite[Proposition 3.7]{BoHamKal2016} for JBW$^*$-algebras.

\begin{proposition}\label{prop Peirce 2 and 1 predual are WCG} Let $e$ be a $\sigma$-finite tripotent in a JBW$^*$-triple $M$.
Then the predual of the space $M_2 (e) \oplus M_1 (e)$ {\rm(}i.e. $(P_2(e)+P_1(e))^* (M_*)${\rm)} is Hilbert-generated.
\end{proposition}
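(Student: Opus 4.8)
The plan is to imitate the von Neumann algebra argument of \cite[Lemma 3.3]{BoHamKalIsrael}: construct a single Hilbert space out of a faithful state on the Peirce-$2$ algebra and show that an associated predual operator generates $N_*:=(P_2(e)+P_1(e))^*(M_*)$. Write $N:=M_2(e)\oplus M_1(e)$; since $P_2(e)+P_1(e)$ is weak$^*$-continuous, $N$ is a weak$^*$-closed subspace of $M$ and $N=(N_*)^*$. As $e$ is $\sigma$-finite, I first choose $\varphi\in M_*$ with $e(\varphi)=e$; then $\varphi=\varphi P_2(e)$ and $\varphi|_{M_2(e)}$ is a faithful normal state of the JBW$^*$-algebra $M_2(e)$. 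On $N$ I consider the sesquilinear form $\langle x,y\rangle_\varphi:=\varphi(\J xye)$, which is linear in $x$ and conjugate-linear in $y$. The target Hilbert space $\C H$ will be the completion of $N$ in the seminorm $\|x\|_\varphi:=\langle x,x\rangle_\varphi^{1/2}$, and $j\colon N\to\C H$ will denote the canonical map.

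Next I would record the structural features of the form. By the Peirce rule \eqref{eq peirce rules2}, for $x_2\in M_2(e)$ and $x_1\in M_1(e)$ one has $\J{x_2}{x_1}e=0$ (index $2-1+2=3$) while $\J{x_1}{x_2}e\in M_1(e)$, on which $\varphi=\varphi P_2(e)$ vanishes; hence the form is block-diagonal, $\|x_2+x_1\|_\varphi^2=\|x_2\|_\varphi^2+\|x_1\|_\varphi^2$, and $\C H$ splits orthogonally as $\C H_2\oplus\C H_1$. On $M_2(e)$, formula \eqref{eq product Peirce2 as JB*-algebra} with $a=c=x$, $b=x$, $c=e$ gives $\J xxe=x\circ_e x^{*_e}\ge0$, so $\langle x,x\rangle_\varphi=\varphi(x\circ_e x^{*_e})\ge0$ and, by faithfulness of $\varphi|_{M_2(e)}$, vanishes only for $x=0$. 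For $x\in M_1(e)$ one has $\J xxe=L(x,x)(e)\in M_2(e)$, and this element is positive (a standard property of the prehilbertian seminorm $\|\cdot\|_\varphi$ attached to a norm-one functional), so $\langle\cdot,\cdot\rangle_\varphi$ is positive semidefinite on all of $N$. Finally $\|x\|_\varphi^2=|\varphi(\J xxe)|\le\|\J xxe\|\le\|x\|^2$ by contractivity of the triple product, so $j$ is bounded with $\|j\|\le1$.

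The crux is to show that $\|\cdot\|_\varphi$ is a genuine norm on $N$, equivalently that $j$ is injective. By the orthogonal splitting it suffices to treat the two summands separately; on $M_2(e)$ this is the faithfulness just noted. On $M_1(e)$ everything reduces to the implication
$$ x\in M_1(e),\quad \J xxe=0\ \Longrightarrow\ x=0, $$
since $\langle x,x\rangle_\varphi=\varphi(\J xxe)=0$ together with $\J xxe\ge0$ and faithfulness forces $\J xxe=0$. I expect this to be the main obstacle: it is the precise point where a state concentrated on $M_2(e)$ must ``detect'' elements of the off-diagonal Peirce-$1$ space. In the von Neumann model, with $e$ a partial isometry, $p=ee^*$, $q=e^*e$ and $x=pa(1-q)+(1-p)bq\in M_1(e)$, one computes $\J xxe=\tfrac12\big(pa(1-q)a^*p+qb^*(1-p)bq\big)$, a sum of two positive pieces whose vanishing yields $pa(1-q)=0$ and $(1-p)bq=0$, i.e.\ $x=0$. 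The general case is the abstract analogue of this computation, which I would establish through the subtriple and range-tripotent calculus for $x$, reducing it to the positivity already used. Granting this, $j$ is injective on each block, hence on $N$.

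It remains to manufacture the generating operator and read off density of its range. I define $R\colon\C H\to N^*$ by $R(h)=\big(x\mapsto\langle j(x),h\rangle_{\C H}\big)$, which is bounded with $\|R\|\le\|j\|$. For $h=j(y)$ one has $R(j(y))=\big(x\mapsto\varphi(\J xye)\big)$, which is weak$^*$-continuous on $N$ because the triple product is separately weak$^*$-continuous and $\varphi\in M_*$; hence $R(j(y))\in N_*$. Since $j(N)$ is dense in $\C H$, $R$ is bounded, and $N_*$ is norm-closed in $N^*$, we conclude $R(\C H)\subseteq N_*$, so $R\colon\C H\to N_*$ is a bounded operator from a Hilbert space. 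Its adjoint is $R^*=j$ (indeed $\langle R^*x,h\rangle=\langle x,Rh\rangle=\langle j(x),h\rangle_{\C H}$), and therefore $\overline{R(\C H)}=N_*$ precisely because $R^*=j$ is injective. Thus $N_*$ is Hilbert-generated, which is the assertion; a fortiori it is weakly compactly generated via the weakly compact set $R(\ball_{\C H})$.
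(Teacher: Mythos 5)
Your argument follows essentially the same route as the paper: the Barton--Friedman form $\varphi\{\cdot,\cdot,e\}$ attached to a functional with support tripotent $e$, its Hilbert-space completion, and a (conjugate-linear) map into the predual whose membership in $N_*$ comes from separate weak$^*$-continuity and whose dense range is read off from Cauchy--Schwarz and a Hahn--Banach/adjoint-injectivity argument. The one step you leave open --- that $\norm{\cdot}_\varphi$ is definite on $M_2(e)\oplus M_1(e)$, equivalently that the annihilator of the seminorm is exactly $M_0(e)$ --- is precisely Lemma~\ref{l hilbertian norm}, which the paper quotes from \cite[Lemma 4.1]{EdRu98} rather than reproving, so your proof closes by that citation instead of the ``range-tripotent calculus'' you gesture at; the only other point to tidy is that your $R$ is conjugate-linear in $h$, which, as in the paper, one repairs by composing with a conjugation on $\C{H}$.
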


To see that Theorem~\ref{T:sigmafinite} follows from the above proposition it is enough to use the fact that any JBW$^*$-triple
contains an abundant set of complete tripotents.
In particular, any $\sigma$-finite JBW$^*$-triple $M$ contains a $\sigma$-finite complete tripotent $e\in M$ such that
$M=M_2(e)\oplus M_1(e)$. Hence Proposition \ref{prop Peirce 2 and 1 predual are WCG} entails Theorem~\ref{T:sigmafinite}.\smallskip

Next let us focus on the proof of Proposition~\ref{prop Peirce 2 and 1 predual are WCG}. Similarly as in the case of
von Neumann algebras and JBW$^*$-algebras it will be done by introducing a canonical (semi)definite inner product.
In \cite[Proposition 1.2]{BarFri1987}, Barton and Friedman showed that given an element $\varphi$ in the dual of a JB$^*$-triple $E$ and an element $z\in E$ such that $\varphi(z)=\|\varphi\|=\|z\|=1$, the map $E\times E\ni(x,y)\mapsto \langle x, y\rangle_{\varphi}:=\varphi\{x,y,z\}$ defines a hermitian semi-positive sesquilinear form with the associated pre-hilbertian seminorm $\|x\|_{\varphi}:=(\varphi\{x,x,z\})^{1/2}$ on $M$ and is independent of $z$.\smallskip

We shall need the following  technical lemma borrowed from \cite[Lemma 4.1]{EdRu98}:

\begin{lemma}\label{l hilbertian norm}
Let $M$ be a JBW$^*$-triple, let $\varphi\in M_*$ be of norm one and let $e=e(\varphi)\in M$ be its support tripotent. Then the annihilator of the pre-hilbertian seminorm  $\|\cdot\|_{\varphi}$  is precisely $M_0(e)$, that is,
\begin{equation}\label{eqn kernel of seminorm} \{ x\in M:
\norm{x}_{\varphi}=0\}= M_0(e).
\end{equation}
In particular, the restriction of $\norm{\cdot}_{\varphi}$ to $M_2(e)\oplus M_1(e)$ is a pre-hilbertian norm and
the restriction of $\langle\cdot,\cdot\rangle_{\varphi}$ to $M_2(e)\oplus M_1(e)$ is an inner product.
\end{lemma}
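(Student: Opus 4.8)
The plan is to evaluate the Barton--Friedman form at the distinguished element $z=e$ and then diagonalise it along the Peirce decomposition, so that the whole statement reduces to a definiteness question on the two nonzero Peirce blocks. Since $e=e(\varphi)$ satisfies $\varphi=\varphi P_2(e)$ and $\varphi|_{M_2(e)}$ is a faithful normal state of $M_2(e)$ (by \cite{FriRu85}), we have $\varphi(e)=1=\|\varphi\|=\|e\|$; hence $e$ is an admissible choice of distinguished element, and by the independence of the defining element in \cite{BarFri1987} I may compute $\langle x,y\rangle_{\varphi}=\varphi\{x,y,e\}$ and $\|x\|_{\varphi}^2=\varphi\{x,x,e\}$. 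As the form is positive semidefinite, its annihilator equals its radical, so by Cauchy--Schwarz $x$ lies in the annihilator if and only if $\langle x,y\rangle_{\varphi}=0$ for all $y$.

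Next I would block-diagonalise. Writing $x=x_2+x_1+x_0$ with $x_i=P_i(e)x$, the Peirce rules \eqref{eq peirce rules1}--\eqref{eq peirce rules2} together with $\varphi=\varphi P_2(e)$ make all cross terms and the $M_0$-diagonal vanish: indeed $\{x_2,x_1,e\}\in M_3(e)=\{0\}$ and $\{x_1,x_0,e\}\in M_3(e)=\{0\}$, while $\{x_2,x_0,e\}=0$ and $\{x_0,x_0,e\}=\{e,x_0,x_0\}=0$ by \eqref{eq peirce rules1}, and $\{x_1,x_2,e\},\{x_0,x_1,e\}\in M_1(e)$ are annihilated by $\varphi=\varphi P_2(e)$. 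Consequently $M_0(e)$ is contained in the annihilator and $\|x\|_{\varphi}^2=\varphi\{x_2,x_2,e\}+\varphi\{x_1,x_1,e\}$. Since $M=M_2(e)\oplus M_1(e)\oplus M_0(e)$, it remains only to prove that the form is definite on each of $M_2(e)$ and $M_1(e)$.

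For the diagonal blocks note that for $w\in M_i(e)$ with $i\in\{1,2\}$ one has $\{w,w,e\}\in M_2(e)$. On the Peirce-$2$ block, formula \eqref{eq product Peirce2 as JB*-algebra} gives $\{x_2,x_2,e\}=x_2\circ_e x_2^{*_e}$, which is manifestly a positive element of the JB$^*$-algebra $M_2(e)$. For $x_1\in M_1(e)$ positivity is not visible from a formula, so I would instead test against states: for every state $\psi$ of $M_2(e)$ the functional $\tilde\psi=\psi\circ P_2(e)$ satisfies $\tilde\psi(e)=1=\|\tilde\psi\|=\|e\|$, whence Barton--Friedman positivity for $\tilde\psi$ yields $\psi\{x_1,x_1,e\}=\tilde\psi\{x_1,x_1,e\}\ge0$; as this holds for all states, $\{x_1,x_1,e\}$ is positive in $M_2(e)$. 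In both cases $\varphi\{x_i,x_i,e\}=0$ combined with faithfulness of $\varphi|_{M_2(e)}$ then forces $\{x_i,x_i,e\}=0$.

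The remaining step — which I expect to be the crux — is to pass from $\{x_i,x_i,e\}=0$ back to $x_i=0$. Here I would invoke the characterization of orthogonality in \cite{BurFerGarMarPe}: the identity $\{a,a,b\}=0$ is equivalent to $a\perp b$. Applied with $b=e$ it gives $x_i\perp e$, i.e. $x_i\in M_0(e)$; but $x_i\in M_i(e)$ with $i\neq0$, so $x_i=0$. This identifies the annihilator as exactly $M_0(e)$, and the final assertion follows, because on the complementary summand $M_2(e)\oplus M_1(e)$ the seminorm $\|\cdot\|_{\varphi}$ is then a genuine norm and $\langle\cdot,\cdot\rangle_{\varphi}$ an inner product. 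The delicate ingredient is the positivity of $\{x_1,x_1,e\}$ in $M_2(e)$: faithfulness of the single state $\varphi$ is not enough to produce it, which is why the detour through all states of $M_2(e)$ (each lifted by $P_2(e)$) is required.
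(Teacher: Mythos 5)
Your argument is correct, but it is genuinely different from what the paper does: the paper's entire proof of this lemma is a citation of \cite[Lemma 4.1]{EdRu98} for the identity $\{x:\|x\|_\varphi=0\}=M_0(e)$ (with a pointer to \cite[Lemma 1.5]{FriRu85} and \cite{Pe15}), whereas you reconstruct the result from scratch. Your Peirce bookkeeping is accurate (all cross terms land in $M_3(e)=\{0\}$, in $M_1(e)=\ker(\varphi P_2(e))$, or vanish by the rule $\{M_2(e),M_0(e),M\}=\{0\}$), and the reduction to definiteness of $\varphi\{x_i,x_i,e\}$ on the two nonzero blocks is exactly the right skeleton; it is essentially the Friedman--Russo/Edwards--R\"uttimann argument that the paper outsources. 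The two points where you should add a line of justification are: (1) positivity of $\{x_2,x_2,e\}=x_2\circ_e x_2^{*_e}$ is ``manifest'' only after writing $x_2=h+ik$ with $h,k$ hermitian in $M_2(e)$ and observing that commutativity of the Jordan product gives $x_2\circ_e x_2^{*_e}=h^2+k^2$; and (2) your state-testing step for the $M_1(e)$ block implicitly uses that an element of a unital JB$^*$-algebra is positive as soon as all states take nonnegative (in particular real) values on it --- true, but worth citing. Your final step via \cite[Lemma 1]{BurFerGarMarPe} ($\{a,a,b\}=0\Leftrightarrow a\perp b$, hence $x_i\in M_0(e)\cap M_i(e)=\{0\}$) is a clean way to close the loop and is consistent with how the paper uses that reference elsewhere. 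What your route buys is self-containedness and an explicit display of where faithfulness of $\varphi|_{M_2(e)}$ enters; what the paper's route buys is brevity, at the cost of hiding exactly the positivity-of-$\{x_1,x_1,e\}$ issue you correctly identify as the crux.
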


\begin{proof} The first statement is proved in \cite[Lemma 4.1]{EdRu98}, the positive definiteness of $\norm{\cdot}_{\varphi}$ and of $\langle\cdot,\cdot\rangle_{\varphi}$ on $M_2(e)\oplus M_1(e)$ follows immediately (see also \cite[Lemma 1.5]{FriRu85}, \cite{Pe15}).
\end{proof}

Now we are ready to prove the main proposition of this section:

\begin{proof}[Proof of Proposition~\ref{prop Peirce 2 and 1 predual are WCG}]
Since $e$ is a $\sigma$-finite tripotent there exists a norm-one normal functional $\varphi\in M_*$ such that $e = e(\varphi)$
is the support tripotent of $\varphi$. Denote by $h_{\varphi}$ the pre-hilbertian space $M_2(e)\oplus M_1(e)$ equipped with
the inner product $\langle \cdot, \cdot\rangle_{\varphi}=\varphi\{\cdot,\cdot,e\}$, and write $H_{\varphi}$ for its completion.
Let us first consider $\widetilde\Phi(a)$ defined by $x\mapsto\langle x,a\rangle_{\varphi}$ for $a\in h_{\varphi}$, $x\in M$.
By the Cauchy-Schwarz inequality we have
$$|\widetilde\Phi(a)(x)|=|\langle x,a\rangle_{\varphi}|\leq\norm{x}_{\varphi}\norm{a}_{\varphi}\leq\norm{x}\norm{a}_{\varphi}$$
which, together with the separate $w^*$-continuity of the triple product, shows that $\widetilde\Phi$ is a well-defined conjugate-linear
contractive map from $h_{\varphi}$ to $M_*$.\smallskip

In order to see that the range of $\widetilde\Phi$ is contained in $(M_2(e)\oplus M_1(e))_*=(P_2^*(e)+P_1^*(e))(M_*)$,
let us observe that for any $a\in h_\varphi$ and $y\in M_0(e)$, we have $\norm{y}_{\varphi}=0$ by Lemma \ref{l hilbertian norm}, and
hence $\widetilde\Phi(a)(y)=0$.

Thus, by density of $h_\varphi$ in $H_\varphi$, $\widetilde\Phi=(P_2^*(e)+P_1^*(e))\widetilde\Phi$ gives rise to a
conjugate-linear continuous map $\Phi:H_{\varphi}\to (M_2(e)\oplus M_1(e))_*$.\smallskip

We shall finally prove that $\Phi $ has norm-dense range.
Suppose $z\in M_2(e)\oplus M_1 (e)$ satisfies $\Phi (a)  (z) =0$ for every $a\in h_{\varphi}$.
In particular, $0= \Phi (z) (z) = \norm{z}_{\varphi}^2$ and thus, by Lemma \ref{l hilbertian norm}, $z=0$.
By the Hahn-Banach theorem, $\Phi$ has dense range. If we replace the map $\Phi$ by $\Phi j$, where $j$ is a conjugation on $H_{\varphi}$, then we have a linear mapping.
\end{proof}

\section{The case of general JBW$^*$-triples}\label{sec:4}

{In this section we state and prove Theorem \ref{t predual of JBW*-triples are 1-Plichko without submodels}, which gives a more
precise version of the first part of Theorem~\ref{t:main}.\smallskip

To provide a precise formulation we introduce one more notation.
For a JBW$^*$ triple $M$ we define the set
\begin{eqnarray*}
M_\sigma =\{x\in M: \mbox{ there is a $\sigma$-finite tripotent $e\in M$ such that }P_2(e)x=x\}
\end{eqnarray*}
and note that
$$\begin{aligned}
M_\sigma&=\{x\in M: \mbox{ there is a $\sigma$-finite tripotent $e\in M$ such that }\J eex=x\}
\\&= \{x\in M: \mbox{ $r(x)$ is a $\sigma$-finite tripotent  }\}
.\end{aligned}
$$
Indeed, the first equality follows from the fact that the range of $P_2(e)$ is the eigenspace of $L(e,e)$ corresponding
to the eigenvalue $1$. Let us show the second equality.
The inclusion `$\supset$' is obvious. To show the converse inclusion, let $x\in M_\sigma$.
Fix a $\sigma$-finite tripotent $e\in M$ with $x=P_2(e)x$, i.e., $x\in M_2(e)$. Since $M_2(e)$ is a JBW$^*$-subtriple of $M$
and $r(x)$ belongs to the JBW$^*$-subtriple generated by $x$, we have $r(x)\in M_2(e)$ and so $r(x)$ is $\sigma$-finite
by Lemma~\ref{L:sigmafinitetrip}.\medskip

We mention the easy but useful fact that $M_{\sigma}$ is 1-norming in $M$.
To see this we simply observe that $M_{\sigma}$ contains all $\sigma$-finite tripotents of $M$, or equivalently,
all support tripotents of functionals in $M_*$.\smallskip

\begin{theorem}\label{t predual of JBW*-triples are 1-Plichko without submodels}
The predual space of a JBW$^*$-triple $M$ is a 1-Plichko space. Moreover,
\begin{equation}
\label{eq:Msigma}
M_{\sigma}\mbox{ is a $1$-norming $\Sigma$-subspace of } M=(M_*)^*.
\end{equation}
In particular, $M_*$ is weakly Lindel\"of determined if and only if $M$ is $\sigma$-finite.
\end{theorem}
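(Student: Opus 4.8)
The plan is to prove the stronger assertion \eqref{eq:Msigma} and to deduce everything else from it. Since the existence of a $1$-norming $\Sigma$-subspace of the dual is exactly the definition of the $1$-Plichko property, the first sentence is immediate once \eqref{eq:Msigma} is established. The final equivalence then follows formally: if $M$ is $\sigma$-finite, then every tripotent, and hence (via range tripotents) every element of $M$, is $\sigma$-finite, so $M_{\sigma}=M=(M_*)^*$ is a $\Sigma$-subspace of itself and $M_*$ is WLD; conversely, if $M_*$ is WLD, then by Lemma~\ref{L:sigmasubspace}$(iv)$ the only norming $\Sigma$-subspace of $M$ is $M$ itself, so \eqref{eq:Msigma} forces $M_{\sigma}=M$, whence every tripotent (being its own range tripotent) is $\sigma$-finite and $M$ is $\sigma$-finite. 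The WCG refinement in the $\sigma$-finite case is Theorem~\ref{T:sigmafinite}. As $M_{\sigma}$ is already known to be $1$-norming, the whole task reduces to showing that $M_{\sigma}$ is a $\Sigma$-subspace of $M$.

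First I would reduce to the summands of the Horn--Neher decomposition \eqref{eq decomposition of JBW*-triples}. Writing $M=\bigoplus_{\ell_\infty}^{i\in I}M^{(i)}$ gives $M_*=\bigoplus_{\ell_1}^{i\in I}M^{(i)}_*$, and a functional in an $\ell_1$-sum has countable support; consequently a tripotent of $M$ is $\sigma$-finite exactly when it is supported on countably many coordinates and $\sigma$-finite in each. From this one checks that $M_{\sigma}=\{(x_i):x_i\in M^{(i)}_{\sigma}\text{ for all }i\text{ and }x_i=0\text{ for all but countably many }i\}$, and that if each $M^{(i)}_{\sigma}$ is induced by a linearly dense set $S_i\subset M^{(i)}_*$, then $S=\bigcup_i S_i$ (viewed in the $i$-th coordinate) is linearly dense in $M_*$ and induces exactly $M_{\sigma}$. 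Hence it suffices to treat a single summand.

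For the summands of the form $A\overline{\otimes}C$ with $C$ a Cartan factor of type $1$--$4$, for $H(W,\alpha)$, and for $pV$, the point is that each is a bicontractively complemented subtriple of a von Neumann algebra: $H(W,\alpha)$ is the range of the bicontractive projection $\frac12(I+\alpha)$ on $W$, $pV$ is the range of the bicontractive projection $x\mapsto px$ on $V$, and $A\overline{\otimes}C$ is the range of a bicontractive projection on the von Neumann tensor product $A\overline{\otimes}B(H)$ by Lemma~\ref{l lifting bicontractive projections to vN tensor products} applied to the bicontractive projection of $B(H)$ onto $C$. By the von Neumann algebra case of our theorem, \cite{BoHamKalIsrael}, each ambient von Neumann algebra $N$ has $N_*$ $1$-Plichko with $1$-norming $\Sigma$-subspace $N_{\sigma}$ (the elements with $\sigma$-finite range projection; as the initial and final projections of a partial isometry are equivalent, this coincides with the elements of $\sigma$-finite range tripotent). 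The exceptional summands $A\overline{\otimes}C=C(\Omega,C)$ with $C$ finite-dimensional are not subtriples of any $B(H)$ and must be handled directly: the predual is $A_*\widehat{\otimes}_{\pi}C^*$, and since $C$ is finite-dimensional $\sigma$-finiteness is governed solely by the support in $\Omega$, so the $\Sigma$-subspace description can be read off from the $1$-Plichko space $A_*$ componentwise.

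It remains to transfer \eqref{eq:Msigma} from an ambient von Neumann algebra $N$ to a bicontractively complemented subtriple $M=P(N)$. Since $P$ is weak$^*$-continuous, $M$ is a weak$^*$-closed subtriple and equals $(N_*/M_\perp)^*$. I would apply Lemma~\ref{L:quotient} with $X=N_*$, $S=N_{\sigma}$, $Z=M$: this yields that $N_{\sigma}\cap M$ is a $1$-norming $\Sigma$-subspace of $M$, provided $N_{\sigma}\cap\ball_M$ is weak$^*$-dense in $\ball_M$. Two facts are needed, and both rest on the same point. For the density, the extreme points of $\ball_M$ are the complete tripotents of $M$, each of which, by \cite[Theorem~3.4$(ii)$]{EdRu98}, is the supremum of an orthogonal family of $\sigma$-finite tripotents of $M$ whose finite subsums are $\sigma$-finite tripotents (by \cite[Theorem~3.4$(i)$]{EdRu98}) of norm at most one converging to it weak$^*$; so $\ball_M=\overline{N_{\sigma}\cap\ball_M}^{\,w^*}$ by Krein--Milman, once these tripotents are known to lie in $N_{\sigma}$. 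For the identification $N_{\sigma}\cap M=M_{\sigma}$, observe that the range tripotent $r(x)$ of $x\in M$ is computed inside the subtriple generated by $x$ and hence is the same in $M$ and in $N$, so this equality says precisely that $r(x)$ is $\sigma$-finite in $N$ if and only if it is $\sigma$-finite in $M$. Thus both required facts reduce to the statement that a tripotent of $M$ is $\sigma$-finite relative to $M$ if and only if it is $\sigma$-finite relative to $N$. This equivalence --- the content of Proposition~\ref{p sigma-finiteness of bicontractively complemented subtriples} --- is the heart of the matter and the main obstacle, since passing to a subtriple need not respect orthogonal families; I expect to prove it by exploiting that $P=\frac12(I+\Theta)$ for a period-$2$ triple automorphism $\Theta$, together with Lemma~\ref{L:sigmafinitetrip} and the characterization of $\sigma$-finiteness through support tripotents of normal functionals.
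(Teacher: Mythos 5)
Your overall architecture --- reduce to \eqref{eq:Msigma}, decompose via Horn--Neher, treat each summand, transfer $\sigma$-finiteness along bicontractive projections, and glue with the $\ell_1$-sum observation --- matches the paper's, and your derivation of the remaining assertions from \eqref{eq:Msigma} is fine. But there is a genuine gap in the treatment of type 4 Cartan factors. You route $A\overline{\otimes}C$ for $C$ of type 4 through the bicontractive-projection argument, asserting that $C$ is the range of a bicontractive projection on $B(H)$ and invoking Lemma~\ref{l lifting bicontractive projections to vN tensor products}. No such bicontractive projection is available in general: what \cite[Lemma 2.3]{EffStor} provides is only a unital positive (hence contractive) projection of $B(H)$ onto a spin factor, and for a bicontractive $P$ one would have $P=\frac12(I+\Theta)$ with $\Theta$ a period-2 triple automorphism of $B(H)$, whose fixed-point sets do not yield general (in particular infinite-dimensional) spin factors. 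This is exactly why the paper separates type 4 from types 1--3: it uses only the contractive version (Lemma~\ref{L:contractiveproj}) to identify $(A\overline{\otimes}C)_*$ with $A_*\widehat{\otimes}_\pi C_*=L^1(\mu,C_*)$ and then exploits the reflexivity of $C$ via a vector-valued $L^1$ argument (Theorem~\ref{t Bochner 1-Plichko}, Propositions~\ref{p Bochner WCG} and~\ref{P:summandsBochner}), which simultaneously covers types 5 and 6. Your one-sentence dismissal of the exceptional factors would need the same machinery; in particular one must still match the $\Sigma$-subspace coming from the $L^1(\mu,C_*)$ picture with $M_\sigma$, which the paper does by proving that $L^\infty(\mu,C)$ is $\sigma$-finite when $\mu$ is finite.

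Second, you correctly isolate Proposition~\ref{p sigma-finiteness of bicontractively complemented subtriples} as the heart of the matter, but you do not prove it; ``I expect to prove it by exploiting $P=\frac12(I+\Theta)$'' is a statement of intent, not an argument. The paper's proof is a genuinely nontrivial computation: given a $\sigma$-finite tripotent $e$ of $N=P(M)$ with supporting functional $\phi\in N_*$, set $\psi=\phi P$, let $u$ be the support tripotent of $\psi$ in $M$, split $u=P(u)+(I-P)(u)$, use the identities \eqref{eq bicontractive proj 2} and the faithfulness of $\phi$ on $N_2(e)$ to conclude $P(u)=e$, and then apply $I-P$ to the tripotent identity for $u$ to force $(I-P)(u)=0$, whence $u=e$. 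Without this (or an equivalent) argument the transfer step, and with it the whole proof, remains incomplete. The rest of your transfer --- Krein--Milman to get weak$^*$-density of $N_\sigma\cap\ball_M$ in $\ball_M$, and the observation that $r(x)$ is computed inside the subtriple generated by $x$ --- is correct, though the density also follows more directly from the fact that $M_\sigma$ contains all support tripotents and is therefore $1$-norming.
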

}

It is not obvious that $M_\sigma$ is a subspace, but this will follow by the proof of
Theorem \ref{t predual of JBW*-triples are 1-Plichko without submodels}; it will be reproved a second time
in Theorem \ref{T:unique}.\medskip

The `in particular' part of the theorem is an immediate consequence of the first statements of the theorem.
Indeed,  $M$ is $\sigma$-finite if and only if $M=M_\sigma$ (cf. Lemma \ref{L:sigmafinitetrip}).
Hence, if $M$ is $\sigma$-finite, then $M_*$ is WLD by the first statement. Conversely, if $M_*$ is WLD,
then by the first part of {the} theorem together with Lemma~\ref{L:sigmasubspace}$(iv)$ we get  $M=M_\sigma$, hence $M$ is $\sigma$-finite.
Thus, it is enough to prove \eqref{eq:Msigma}. This will be done in the rest of this section
by using results in \cite{BoHamKalIsrael} and the decomposition \eqref{eq decomposition of JBW*-triples}.\smallskip

{The following proposition is almost immediate from the main results of \cite{BoHamKalIsrael}.}

\begin{proposition}\label{P:mainvN}
The statement of Theorem~\ref{t predual of JBW*-triples are 1-Plichko without submodels} holds for von Neumann algebras.
\end{proposition}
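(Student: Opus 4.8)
The plan is to deduce the proposition directly from the two main achievements of \cite{BoHamKalIsrael}: that the predual of any von Neumann algebra is $1$-Plichko, and that a canonical $1$-norming $\Sigma$-subspace of $M=(M_*)^*$ is the set
$$N=\{x\in M:\ \text{the range projection of } x \text{ is } \sigma\text{-finite}\},$$
where the range projection is taken in the usual von Neumann algebra sense. Granting this, the $1$-Plichko property is immediate, and in order to obtain \eqref{eq:Msigma} it suffices to verify the equality $M_\sigma=N$; this simultaneously shows that $M_\sigma$ is a subspace. Once \eqref{eq:Msigma} is established, the remaining assertions of Theorem~\ref{t predual of JBW*-triples are 1-Plichko without submodels} (in particular the characterization of the WLD property) follow at once from the general reduction already recorded in the text following that theorem, which uses only \eqref{eq:Msigma}. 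Thus the whole argument reduces to matching the two descriptions of the $\Sigma$-subspace.

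To compare them I would first describe the range tripotent concretely. Writing the polar decomposition $x=v|x|$ of a norm-one element, the odd functional calculus in the subtriple $E_x$ gives $x^{[\frac{1}{2n-1}]}=v\,|x|^{\frac{1}{2n-1}}$; since $|x|^{\frac{1}{2n-1}}$ increases weak$^*$ to the right support projection $v^*v$ of $x$ and $v(v^*v)=v$, the defining weak$^*$-limit shows $r(x)=v$. Hence $r(x)$ is exactly the partial isometry of the polar decomposition, and its left support $vv^*$ is precisely the range projection of $x$. Consequently $x\in M_\sigma$ if and only if the partial isometry $v=r(x)$ is a $\sigma$-finite tripotent, while $x\in N$ if and only if the projection $vv^*$ is $\sigma$-finite, so $M_\sigma=N$ amounts to the assertion that a partial isometry is a $\sigma$-finite tripotent precisely when its range projection is a $\sigma$-finite projection.

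For this last point I would invoke Lemma~\ref{L:sigmafinitetrip}$(ii)$, which reduces $\sigma$-finiteness of $v$ to $\sigma$-finiteness of the JBW$^*$-algebra $M_2(v)$. From the Peirce formula $P_2(v)x=vv^*\,x\,v^*v$ one has $M_2(v)=vv^*Mv^*v$, and right multiplication by $v^*$ is a Jordan (hence order-theoretic) isomorphism of $M_2(v)$ onto the corner $vv^*Mvv^*$; thus $M_2(v)$ is $\sigma$-finite if and only if the projection $vv^*$ is $\sigma$-finite, and by the remark recalled in Section~\ref{triples} the latter is the same in the triple and in the usual von Neumann senses. This yields $M_\sigma=N$ and finishes the proof. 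The only genuinely delicate step is exactly this matching of the two notions of $\sigma$-finiteness for a tripotent that is a proper partial isometry rather than a projection; I would also note that $vv^*$ and $v^*v$ are Murray--von Neumann equivalent via $v$ and hence simultaneously $\sigma$-finite, so the identification is unambiguous regardless of which support projection is taken as the range projection in \cite{BoHamKalIsrael}.
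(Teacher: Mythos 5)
Your proposal is correct and follows the same overall strategy as the paper: both reduce the proposition to \cite[Proposition 4.1]{BoHamKalIsrael} and then identify $M_\sigma$ with the $\Sigma$-subspace described there, after which the WLD characterization follows from the general reduction recorded after the statement of Theorem~\ref{t predual of JBW*-triples are 1-Plichko without submodels}. The difference is in how the identification is carried out. The paper works with the description $\{x\in M: x=qxq \text{ for a $\sigma$-finite projection } q\in M\}$ and argues directly: one inclusion is the one-line computation $\{q,q,x\}=x$, and for the other it writes $P_2(u)x=pxq$ with $p=uu^*$ and $q=u^*u$, checks by hand that $p$ and $q$ are $\sigma$-finite (an uncountable orthogonal family of projections below $p$ would yield the uncountable orthogonal family of tripotents $(r_\gamma u)$ below $u$), and then passes to the supremum $p\vee q$, citing \cite[Theorem 3.4]{EdRu98} for its $\sigma$-finiteness. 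You instead use the equivalent description $M_\sigma=\{x: r(x) \text{ is $\sigma$-finite}\}$, compute $r(x)$ explicitly as the partial isometry $v$ of the polar decomposition, and transfer $\sigma$-finiteness from the tripotent $v$ to the projection $vv^*$ via the Jordan $*$-isomorphism $a\mapsto av^*$ of $M_2(v)=vv^*Mv^*v$ onto the corner $vv^*Mvv^*$ combined with Lemma~\ref{L:sigmafinitetrip}$(ii)$. Both routes are sound; yours is somewhat more conceptual (the identity $r(x)=v$ is of independent interest and makes the matching of the two $\Sigma$-subspaces transparent), while the paper's is more elementary and avoids the polar decomposition and the Jordan-isomorphism step. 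The one point to watch in your version is the exact formulation in \cite{BoHamKalIsrael}: the cited proposition states the $\Sigma$-subspace as $\{x: x=qxq \text{ for a $\sigma$-finite projection } q\}$, so the equivalence with ``the range projection of $x$ is $\sigma$-finite'' still has to be noted --- which you do, correctly, via the Murray--von Neumann equivalence of $vv^*$ and $v^*v$.
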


\begin{proof}
{It is enough to show \eqref{eq:Msigma} in case $M$ is a von Neumann algebra.}
In view of \cite[Proposition 4.1]{BoHamKalIsrael}, {to this end} it is enough to show that
$$M_\sigma=\{x\in M: x=qxq\mbox{ for a $\sigma$-finite projection }q\in M\}.$$

Let $x$ be in the set on the right-hand side. Fix a $\sigma$-finite projection $q\in M$ with $x=qxq$. Then $q$ is a $\sigma$-finite tripotent and $\J qqx=\frac12(qx+xq)=qxq=x$. Hence $x\in M_\sigma$. \smallskip

Conversely, let $x\in M_\sigma$ and let $u\in M$ be  a $\sigma$-finite triponent with $x=P_2(u)x$. Since $M$ is a von Neumann algebra, $u$ is a partial isometry and hence $P_2(u)x=pxq$, where $p=uu^*$ is the final projection and $q=u^*u$ is the initial projection. Then $p$ is a $\sigma$-finite projection.
Indeed, suppose that $(r_\gamma)_{\gamma\in \Gamma}$ is an uncountable family of pairwise orthogonal projections smaller than $p$. Then it is easy to check that $(r_\gamma u)_{\gamma\in \Gamma}$ is an uncountable family of pairwise orthogonal tripotents smaller than $u$. Similarly we get that $q$ is $\sigma$-finite.
Hence their supremum $r=p\vee q$ is $\sigma$-finite as well {(\cite[Theorem 3.4]{EdRu98} or \cite[Exercice 5.7.45]{KadRing})}
and satisfies $x=rxr$. Thus $x$ belongs to the set on the right-hand side and the proof is complete.
\end{proof}

\begin{proposition}\label{p sigma-finiteness of bicontractively complemented subtriples} Let $P : M\to M$ be a bicontractive
projection on a JBW$^*$-triple, let $N= P(M)$, and let $e$ be a tripotent in $N$.
Then $e$ is $\sigma$-finite in $N$ if and only if $e$ is $\sigma$-finite in $M$, that is, $N_{\sigma} = N\cap M_{\sigma}$.
\end{proposition}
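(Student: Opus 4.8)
The plan is to reduce both implications to the description of $\sigma$-finiteness through support tripotents, using the structure of bicontractive projections. First I would record the ingredients from Section~\ref{subsec: contractive projections}: since $P$ is bicontractive, $N=P(M)$ is a JB$^*$-subtriple of $M$, and being the kernel of the weak$^*$-to-weak$^*$-continuous map $I-P$ it is weak$^*$-closed, hence a JBW$^*$-subtriple with predual $N_*=M_*/N_\perp$; moreover there is a period-$2$ triple automorphism $\Theta$ of $M$ with $P=\tfrac12(I+\Theta)$ and $N=\{x\in M:\Theta(x)=x\}$. Because $N$ is a subtriple, for tripotents of $N$ the relations $\perp$ and $\le$ and the range tripotents $r(x)$ all coincide whether computed in $N$ or in $M$ (they are expressed purely through the common triple product; e.g. $u\perp v$ iff $\{u,u,v\}=0$). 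Consequently an orthogonal family of tripotents of $N$ majorized by $e$ is also such a family in $M$, which gives at once the easy implication: if $e$ is $\sigma$-finite in $M$, then it is $\sigma$-finite in $N$.

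For the converse, suppose $e$ is $\sigma$-finite in $N$ and pick a norm-one $\psi\in N_*$ with support tripotent $e(\psi)=e$ (computed in $N$), using the characterization of $\sigma$-finiteness recalled after Lemma~\ref{L:sigmafinitetrip}. Set $\varphi:=\psi\circ P\in M_*$ (normal since $P$ is weak$^*$-continuous). As $P|_N=\mathrm{id}_N$, we have $\varphi|_N=\psi$; in particular $\|\varphi\|=\|\psi\|=1$ and $\varphi(e)=\psi(e)=1$, since $e$ is the unit of the JBW$^*$-algebra $N_2(e)$ and $\psi|_{N_2(e)}$ is a faithful normal state. The crucial extra feature is $\Theta$-invariance: from $\Theta^2=I$ one gets $P\Theta=\tfrac12(\Theta+\Theta^2)=P$, whence $\varphi\circ\Theta=\psi\circ P\circ\Theta=\psi\circ P=\varphi$.

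Next I would locate the support tripotent $e(\varphi)\in M$. Applying \eqref{eq order in support tripotents} with $u=e$ (legitimate since $\varphi(e)=1=\|\varphi\|$) yields $e(\varphi)\le e$. To see $e(\varphi)\in N$, I would use that for a triple automorphism $\Theta$ the Peirce projections transform by $P_i(\Theta w)=\Theta P_i(w)\Theta^{-1}$ (because $L(\Theta w,\Theta w)=\Theta L(w,w)\Theta^{-1}$), which together with the defining property of the support tripotent gives $e(\varphi\circ\Theta)=\Theta^{-1}(e(\varphi))=\Theta(e(\varphi))$. Since $\varphi\circ\Theta=\varphi$, this forces $\Theta(e(\varphi))=e(\varphi)$, i.e. $e(\varphi)\in N$. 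Now $e(\varphi)$ and $e-e(\varphi)$ are projections of the JBW$^*$-algebra $N_2(e)$, and using $e(\varphi)\in N$ one computes $\psi(e(\varphi))=\varphi(e(\varphi))=1$ (the last equality because $e(\varphi)$ is the unit of $M_2(e(\varphi))$ and $\varphi|_{M_2(e(\varphi))}$ is a state). Hence $\psi(e-e(\varphi))=\psi(e)-\psi(e(\varphi))=0$ with $e-e(\varphi)\ge0$ in $N_2(e)$; faithfulness of $\psi|_{N_2(e)}$ then gives $e-e(\varphi)=0$, so $e=e(\varphi)$ is the support tripotent of $\varphi\in M_*$ and $e$ is $\sigma$-finite in $M$.

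Finally, the displayed identity $N_\sigma=N\cap M_\sigma$ follows by specializing to range tripotents: for $x\in N$ one has $r(x)\in N$ with the same range tripotent in $N$ and in $M$, and by the description of $M_\sigma$ (and its analogue for $N$) membership is governed by $\sigma$-finiteness of $r(x)$; the equivalence just proved then yields $x\in N_\sigma\iff x\in N\cap M_\sigma$. I expect the main obstacle to be the identification $e(\varphi)\in N$, for which the $\Theta$-invariance of $\varphi$ combined with the covariance of the support tripotent under triple automorphisms is the decisive point; the final faithfulness step then upgrades the inequality $e(\varphi)\le e$ to the equality $e(\varphi)=e$.
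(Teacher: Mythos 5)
Your argument is correct, and the easy direction and the final reduction of $N_\sigma=N\cap M_\sigma$ to range tripotents match the paper exactly; but your treatment of the key step (the converse implication) takes a genuinely different route. Both proofs start the same way, pushing the functional forward by $P$ and using \eqref{eq order in support tripotents} to get $e(\varphi)\le e$, and both finish with the same faithfulness argument on $N_2(e)$. The difference is how one shows that the support tripotent computed in $M$ actually lies in $N$. The paper never isolates this as the issue: it splits $u=e(\varphi)$ as $u_1+u_2$ with $u_1=P(u)$, $u_2=(I-P)(u)$, uses the module identities \eqref{eq bicontractive proj 2} to see that $u_1$ is a hermitian contraction in $N_2(e)$, gets $u_1=e$ from faithfulness, and then expands the tripotent identity for $u=e+u_2$ and applies $I-P$ to obtain $-2u_2=\{u_2,u_2,u_2\}$, which kills $u_2$ by the norm axiom. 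You instead invoke the Friedman--Russo representation $P=\frac12(I+\Theta)$ with $\Theta$ a period-$2$ triple automorphism (available from \cite{FriRu87}, and recalled in Section~\ref{subsec: contractive projections}), note $\varphi\circ\Theta=\varphi$, and conclude $\Theta(e(\varphi))=e(\varphi)$ from the covariance $e(\varphi\circ\Theta)=\Theta^{-1}(e(\varphi))$. This is cleaner and avoids the cubic-identity computation entirely, at the price of relying on the deeper structure theorem for bicontractive projections and on the covariance of support tripotents under triple automorphisms, which you should justify explicitly: it follows from $P_2(\Theta^{-1}w)=\Theta^{-1}P_2(w)\Theta$ together with the uniqueness of the support tripotent in \cite[Proposition 2]{FriRu85}, since $\Theta$ restricts to a unital Jordan $*$-isomorphism of $M_2(\Theta^{-1}w)$ onto $M_2(w)$ and hence preserves faithful normal states. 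With that one-line verification added, your proof is complete.
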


\begin{proof}
The ``if'' implication is clear. Let $e$ be a $\sigma$-finite tripotent in $N$. By \cite[Theorem 3.2]{EdRu98} there exists a norm-one functional $\phi\in N_*$ whose support tripotent in $N$ is $e$. Let us define $\psi = P^* (\phi) = \phi P \in M_*$. Clearly $\|\psi\|=1$. We shall prove that $e$ is the support tripotent of $\psi$ in $M$, and hence $e$ is $\sigma$-finite in $M$ (\cite[Theorem 3.2]{EdRu98}). Let $u$ be the support tripotent of $\psi$ in $M$. From $\psi (e) = \phi (e) = 1= \|\psi\|$ we get $e\geq u$ (compare \cite[part $(b)$ in the proof of Proposition 2]{FriRu85}).\smallskip

We set $u_1=P(u)$ and $u_2=u-u_1$. Since $e\geq u$ in $M$, we deduce that $\{ e,u, e\}=u=\{e,e,u\}$ ($e-u\in M_0(u)$ and Peirce rules). Hence, $u_1=P(u)=\{ e,Pu, e\}=\{e,u_1,e\}$ and $u_1=\{e,e,u_1\}$ by \eqref{eq bicontractive proj 2}.
It follows that $u_1 = \{e,u_1,e\}\in M_2 (e)$ and that $u_1= \{e,u_1,e\}= u_1^{*_e}$ is a hermitian element in the closed unit ball of the JBW$^*$-algebra $N_2 (e)$.
As $e$ is the unit in this algebra and $u_1$ is a hermitian element of norm less than one, we see that $e-u_1$ is a positive element in the JBW$^*$-algebra $N_2(e)$.
The condition $$\phi(e)= 1= \psi (u) = \phi P (u) = \phi (u_1)$$ implies, by the faithfulness of $\phi|_{N_2 (e)}$, that $u_1 = e$.\smallskip

It follows from the above that $u_2=\{e,e,u\}-\{e,e,u_1\}=\{e,e,u_2\}$ and similarly $u_2=\{e,u_2,e\}$.
These identities combined with the fact that $u = e+ u_2$ is a tripotent (that is, $\{e+ u_2,e+ u_2,e+ u_2\} = e+ u_2$) yield
$$ e+ u_2 = e   + 2 \{u_2,u_2,e\}+ \{u_2, e, u_2\}+  3 u_2 + \{u_2,u_2,u_2\}.$$
After applying the bicontractive projection $I-P$ in both terms of the last equality we get $-2 u_2 = \{u_2,u_2,u_2\}$.
Now $2\|u_2\|=\|\{u_2,u_2,u_2\}\|=\|u_2\|^3$ implies either $u_2=0$ or $\|u_2\|^2=2$. The latter is not possible because
$\|u_2\|\leq1$ by the fact that $u_2=(I-P)u$ and $I-P$ is a contraction. Thus $u_2=0$, and hence $e= u$, which proves the first statement.\smallskip

For the last identity we observe that for every element $x\in N$, its range tripotent $r(x)$ (in $N$ or in $M$) lies in $N$. Suppose $x$ is an element in $N$ whose range tripotent is $\sigma$-finite in $N$. We deduce from the first statement that $r(x)$  is also $\sigma$-finite in $M$, and hence $N_{\sigma}\subseteq M_{\sigma}$.
The inclusion $N_{\sigma}\supseteq M_{\sigma}\cap N$ is clear.
\end{proof}

By combining Proposition~\ref{P:mainvN}, Proposition~\ref{p sigma-finiteness of bicontractively complemented subtriples}, and
Lemma~\ref{L:quotient} we get the following proposition.

\begin{proposition}\label{p 1-Plichko for predulas of images of normal bicontractive projections} Let $P : W\to W$ be a
bicontractive projection on a von Neumann algebra $W$, let $M= P(W)$. Then $M_*$ is a 1-Plichko space. Furthermore, $M_{\sigma}$
is a $1$-norming $\Sigma$-subspace of $M$.
\end{proposition}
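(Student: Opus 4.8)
The plan is to realize $M=P(W)$ as a dual of a nice quotient of $W_*$ and then transport the 1-Plichko structure from $W_*$ via Lemma~\ref{L:quotient}. First I would record what Proposition~\ref{P:mainvN} gives for the von Neumann algebra $W$: the predual $W_*$ is 1-Plichko and $W_\sigma$ is a 1-norming $\Sigma$-subspace of $W=(W_*)^*$. The goal is to cut this structure down to $M$.

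Next I would identify $M$ as a weak$^*$-closed subspace of $W=(W_*)^*$. Since $P$ is bicontractive on the JBW$^*$-triple $W$, by Section~\ref{subsec: contractive projections} it is weak$^*$-to-weak$^*$-continuous, so $P=(P_*)^*$ for a bicontractive projection $P_*$ on $W_*$, and $M=P(W)=\ker(I-P)$ is weak$^*$-closed. Writing $Z=M$ in the notation of Lemma~\ref{L:quotient}, I would take $X=W$ so that $Z=M$ is canonically $(W/M_\perp)^*$, where $M_\perp=\ker P_*=(I-P_*)(W_*)$. To apply Lemma~\ref{L:quotient} I need a 1-norming $\Sigma$-subspace $S$ of $W$ with $S\cap\ball_M$ weak$^*$-dense in $\ball_M$; I would take $S=W_\sigma$. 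The verification of weak$^*$-density is the crucial hypothesis: since $P$ is weak$^*$-continuous and contractive, it maps $\ball_W$ into $\ball_M$, and $\ball_W\cap W_\sigma$ is weak$^*$-dense in $\ball_W$ (the support tripotents of functionals in $W_*$ lie in $W_\sigma$ and are weak$^*$-dense in $\ball_W$); applying $P$ and using that $P(W_\sigma)\subset M_\sigma\subset W_\sigma$ — which is exactly Proposition~\ref{p sigma-finiteness of bicontractively complemented subtriples} — should yield that $M_\sigma=W_\sigma\cap M$ is weak$^*$-dense in $\ball_M$.

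With the density hypothesis in hand, Lemma~\ref{L:quotient} immediately gives that $W_\sigma\cap M$ is a 1-norming $\Sigma$-subspace of $M=(W/M_\perp)^*$, so $M_*\cong W/M_\perp$ is 1-Plichko. The final bookkeeping is to replace $W_\sigma\cap M$ by $M_\sigma$: by Proposition~\ref{p sigma-finiteness of bicontractively complemented subtriples} these two sets coincide, $M_\sigma=M\cap W_\sigma$, so $M_\sigma$ is itself the desired 1-norming $\Sigma$-subspace of $M$, completing the proof.

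I expect the main obstacle to be the weak$^*$-density step, namely producing enough elements of $M_\sigma$ inside $\ball_M$ to be weak$^*$-dense there, and making precise the passage from density in $\ball_W$ to density in $\ball_M=P(\ball_W)$ under the weak$^*$-continuous contraction $P$. Everything else is a clean concatenation: the identification of $M$ as a weak$^*$-closed dual space rests on the bicontractivity/automatic weak$^*$-continuity from Section~\ref{subsec: contractive projections}, the $\Sigma$-subspace transfer is Lemma~\ref{L:quotient}, and the identification $M_\sigma=M\cap W_\sigma$ is precisely Proposition~\ref{p sigma-finiteness of bicontractively complemented subtriples}.
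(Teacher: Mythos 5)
Your proposal is correct and follows essentially the same route as the paper, which obtains this proposition precisely by combining Proposition~\ref{P:mainvN}, Proposition~\ref{p sigma-finiteness of bicontractively complemented subtriples} and Lemma~\ref{L:quotient} in the way you describe. One small imprecision: the inclusion $P(W_\sigma)\subset M_\sigma$ is not literally the content of Proposition~\ref{p sigma-finiteness of bicontractively complemented subtriples} (it additionally uses that $W_\sigma$ is a linear subspace invariant under the period-2 automorphism $\Theta$ with $P=\frac12(I+\Theta)$); the weak$^*$-density hypothesis of Lemma~\ref{L:quotient} is obtained more directly from the fact that $M_\sigma$ contains all support tripotents of functionals in $M_*$ and is therefore $1$-norming in $M$, so that $\ball_M\cap M_\sigma=\ball_M\cap W_\sigma$ is weak$^*$-dense in $\ball_M$.
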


Now we are ready to prove the validity of \eqref{eq:Msigma} for most of the summands from the representation \eqref{eq decomposition of JBW*-triples}:

\begin{proposition}\label{P:summandsbicontractive}
Let $M$ be a JBW$^*$-triple of one of the following forms:
\begin{enumerate}[$(a)$]
	\item $M=A\overline{\otimes}C$, where $A$ is a commutative von Neumann algebra and $C$ is a Cartan factor of type 1, 2 or 3.
	\item $M=H(W,\alpha)$, where $W$ is a von Neumann algebra and $\alpha$ is a linear involution on $W$ commuting with $^*$.
	\item $M=pV$, where $V$ is a von Neumann algebra and $p\in V$ is a projection.
\end{enumerate}
Then $M_\sigma$ is a $1$-norming $\Sigma$-subspace of $M=(M_*)^*$.
\end{proposition}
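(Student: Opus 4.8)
The plan is to reduce each of the three cases $(a)$, $(b)$, $(c)$ to the already-established Proposition~\ref{p 1-Plichko for predulas of images of normal bicontractive projections}, which handles ranges of bicontractive projections on von Neumann algebras. The strategy in every case is the same: exhibit $M$ as the range of a bicontractive projection acting on a genuine von Neumann algebra, and then invoke that proposition verbatim. The identity $M_\sigma = M \cap W_\sigma$ needed to transport the description of the $\Sigma$-subspace is exactly the content of Proposition~\ref{p sigma-finiteness of bicontractively complemented subtriples} (the final equality $N_\sigma = N \cap M_\sigma$), so once $M$ is realized as such a range, both the 1-Plichko property and the explicit form \eqref{eq:Msigma} follow at once.

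For case $(b)$, $M = H(W,\alpha)$, the involution $\alpha$ is a $^*$-antiautomorphism of period $2$, so $P = \frac12(I + \alpha)$ is a projection onto $H(W,\alpha)$. Since $\alpha$ is an isometry (being a triple automorphism) one checks $\|P\|\le 1$ and $\|I-P\|=\|\frac12(I-\alpha)\|\le 1$, so $P$ is bicontractive. For case $(c)$, $M = pV$ with $p$ a projection; here the natural candidate is left multiplication $x \mapsto px$, which is a contractive projection onto $pV$, but it is \emph{not} bicontractive in general. The correct move, following the type-1 discussion on page~\pageref{def Cartan factors}, is to pass to the matrix algebra $M_2(V)$ (or equivalently $V \otimes B(\mathbb{C}^2)$): inside this larger von Neumann algebra one can build a genuine bicontractive projection (a compression by a suitable self-adjoint unitary) whose range is triple-isomorphic to $pV$. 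The same device realizes the $\dim K < \dim H$ and $\dim K > \dim H$ cases of type-$1$ Cartan factors.

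Case $(a)$ is where the hypotheses of the earlier lemmas are designed to be used. For $C$ a Cartan factor of type $1$, $2$, or $3$, the excerpt has already observed that $C$ is the range of a bicontractive projection on some $B(H)$ (for type $1$ when the dimensions differ, via $x\mapsto px$ or $x\mapsto xp$; for types $2$ and $3$ via $P(x)=\frac12(x^t+x)$). Call this bicontractive projection $P_0$ on $B(H)$. Then Lemma~\ref{l lifting bicontractive projections to vN tensor products} lifts $P_0$ to a bicontractive projection $Q = I_A \otimes P_0$ on the von Neumann algebra $W = A\overline{\otimes} B(H)$ whose range is exactly $A\overline{\otimes} C = M$. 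Thus $M$ is the range of a bicontractive projection on a von Neumann algebra, and Proposition~\ref{p 1-Plichko for predulas of images of normal bicontractive projections} applies directly. When $C = B(H)$ itself (equal dimensions), $A\overline{\otimes}C$ is already a von Neumann algebra and Proposition~\ref{P:mainvN} suffices.

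The main obstacle will be case $(c)$, because the obvious projection onto $pV$ is only contractive, not bicontractive, and contractivity alone does not yield that $M_\sigma = M \cap V_\sigma$ with the clean description we need. The resolution is the embedding into $M_2(V)$ described above, so the technical care lies in verifying that the compression there is genuinely bicontractive and that its range is triple-isomorphic to $pV$ in a way that matches $\sigma$-finite tripotents on both sides; once this identification is in place, Proposition~\ref{p sigma-finiteness of bicontractively complemented subtriples} transfers the $\Sigma$-subspace description without further work. Cases $(a)$ and $(b)$ are comparatively routine given the preparatory Lemmas~\ref{L:contractiveproj} and~\ref{l lifting bicontractive projections to vN tensor products}.
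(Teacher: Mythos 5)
Your overall strategy for cases $(a)$ and $(b)$ coincides with the paper's: realize $M$ as the range of a bicontractive projection on a von Neumann algebra (for $(a)$ via the bicontractive projection onto the Cartan factor lifted by Lemma~\ref{l lifting bicontractive projections to vN tensor products}, for $(b)$ via $x\mapsto\frac12(x+\alpha(x))$) and then quote Proposition~\ref{p 1-Plichko for predulas of images of normal bicontractive projections}. That part is fine.

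The problem is case $(c)$. Your central claim there --- that $x\mapsto px$ is ``\emph{not} bicontractive in general'' --- is simply false, and it is what drives you into an unnecessary and, as sketched, non-working detour. The complementary map is $x\mapsto x-px=(1-p)x$, and since $1-p$ is a projection we have $\|(1-p)x\|\le\|1-p\|\,\|x\|\le\|x\|$; hence $\|I-P\|\le 1$ and $P(x)=px$ is bicontractive. (Equivalently, $P=\frac12(I+\Theta)$ with $\Theta(x)=(2p-1)x$, which is a surjective linear isometry of period $2$, exactly the Friedman--Russo form recalled in Section~\ref{subsec: contractive projections}; note also that the type-1 discussion in Section~\ref{sec:structure}, which you yourself cite, already asserts that $x\mapsto px$ is bicontractive.) This is precisely how the paper disposes of case $(c)$ in one line. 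Your proposed substitute --- embedding into $M_2(V)$ and taking ``a compression by a suitable self-adjoint unitary'' --- does not deliver what you need: a bicontractive projection of the form $\frac12(I+\mathrm{Ad}_u)$ with $u$ a self-adjoint unitary has as its range the fixed-point set of $x\mapsto uxu$, which for $u=2p-1$ is $pVp\oplus(1-p)V(1-p)$, not $pV$. So as written the key step of case $(c)$ is both based on a wrong premise and replaced by an argument that would fail; the repair is to delete the detour and observe that the obvious projection already is bicontractive.
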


\begin{proof} We will apply Proposition~\ref{p 1-Plichko for predulas of images of normal bicontractive projections}.
To do that it is enough to show that $M$ is the range of a bicontractive projection on a von Neumann algebra.\smallskip

$(a)$ If $C$ is a Cartan factor of type 1, 2 or 3, then $C$ is the range of a bicontractive projection on a certain von Neumann algebra $W,$ as
it was previously observed after the definitions of the respective Cartan factors. The desired bicontractive projection on $A\overline{\otimes}W$ is finally given by Lemma~\ref{l lifting bicontractive projections to vN tensor products}. \smallskip

$(b)$ A bicontractive projection on $W$ is given by $x\mapsto \frac12(x+\alpha(x))$. \smallskip

$(c)$ The mapping $x\mapsto px$ defines a bicontractive projection on $V$.
\end{proof}

The remaining summands from \eqref{eq decomposition of JBW*-triples} are covered by the following theorem, which we
formulate in a more abstract setting of Banach spaces.

\begin{theorem}\label{t Bochner 1-Plichko} Let $(\Omega,\Sigma,\mu)$ be a measure space with a non-negative semifinite measure, and let $E$ be a reflexive Banach space. Then the space $L^1(\mu,E)$ of Bochner-integrable functions is $1$-Plichko. Furthermore, $L^1(\mu,E)$ is weakly Lindel\"{o}f determined if and only if $\mu$ is $\sigma$-finite, in the latter case it is even weakly compactly generated.

More precisely, there is a family of finite measures $(\mu_\gamma)_{\gamma\in\Gamma}$ such that $L^1(\mu,E)$
is isometric to
$$\left(\bigoplus_{\gamma\in\Gamma} L^1(\mu_\gamma,E)\right)_{\ell_1}$$
and
$$S=\left\{ f=(f_\gamma)_{\gamma\in\Gamma}\in \left(\bigoplus_{\gamma\in\Gamma} L^\infty(\mu_\gamma,E)\right)_{\ell_\infty}:
\{\gamma\in\Gamma : f_\gamma\ne 0\}\mbox{ is countable}\right\}$$
is a $1$-norming $\Sigma$-subspace of  $(L^1(\mu,E))^*=\left(\bigoplus_{\gamma\in\Gamma} L^\infty(\mu_\gamma,E)\right)_{\ell_\infty}$.
\end{theorem}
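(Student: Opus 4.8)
First I would reduce the problem to the case of a finite measure space via the decomposition of a semifinite measure into finite pieces. For this I would invoke the standard fact that any semifinite measure space $(\Omega,\Sigma,\mu)$ admits a partition into a family of finite-measure sets $(\Omega_\gamma)_{\gamma\in\Gamma}$ such that $\mu$ is the direct sum of its restrictions $\mu_\gamma=\mu|_{\Omega_\gamma}$, and correspondingly $L^1(\mu,E)$ is isometric to the $\ell_1$-sum $\left(\bigoplus_{\gamma\in\Gamma}L^1(\mu_\gamma,E)\right)_{\ell_1}$. This is where the displayed isometric identification in the statement comes from. The duality $(L^1(\mu_\gamma,E))^*=L^\infty(\mu_\gamma,E^*)=L^\infty(\mu_\gamma,E)$ uses the reflexivity of $E$ (so that $E$ has the Radon--Nikod\'ym property and $E=E^*$ up to isometry in the duality sense), giving $(L^1(\mu,E))^*=\left(\bigoplus_{\gamma\in\Gamma}L^\infty(\mu_\gamma,E)\right)_{\ell_\infty}$.

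\textbf{Identifying the $\Sigma$-subspace.} With this representation fixed, I would verify directly that the candidate set $S$ is a $\Sigma$-subspace induced by a concrete linearly dense set. The natural choice is $A=\bigcup_{\gamma\in\Gamma}L^1(\mu_\gamma,E)$, viewed inside $L^1(\mu,E)$ as the functions supported on a single $\Omega_\gamma$; this is linearly dense because simple functions with values in $E$ are dense and each such function lives on finitely many pieces. A functional $f=(f_\gamma)_\gamma\in\left(\bigoplus L^\infty(\mu_\gamma,E)\right)_{\ell_\infty}$ acts on $g\in L^1(\mu_{\gamma_0},E)$ only through its coordinate $f_{\gamma_0}$, so the set $\{x\in A: f(x)\ne0\}$ meets only the coordinates $\gamma$ with $f_\gamma\ne0$; a short argument shows this set of $x$ is countable precisely when $\{\gamma: f_\gamma\ne0\}$ is countable. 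Thus $S$ is exactly the $\Sigma$-subspace determined by $A$, which establishes that $S$ is a subspace and is a genuine $\Sigma$-subspace.

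\textbf{Norming and the WLD/WCG dichotomy.} That $S$ is $1$-norming follows because any $f\in L^1(\mu,E)$ is supported (up to null sets) on countably many $\Omega_\gamma$, and on each piece one can choose a norming element of $L^\infty(\mu_\gamma,E)$ of norm one, assembling a functional in $S$ of norm one attaining $\|f\|$; reflexivity of $E$ is what guarantees the pointwise norming element exists. This proves $L^1(\mu,E)$ is $1$-Plichko. For the dichotomy: if $\mu$ is $\sigma$-finite the index set $\Gamma$ can be taken countable, so $S$ equals the whole dual and $L^1(\mu,E)$ is WLD; moreover each $L^1(\mu_\gamma,E)$ with $E$ reflexive is weakly compactly generated (indeed Hilbert-generated arguments or the reflexive-valued Bochner theory apply), and a countable $\ell_1$-sum of WCG spaces is WCG. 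Conversely, if $\mu$ is not $\sigma$-finite then $\Gamma$ is uncountable, $S$ is a proper norming $\Sigma$-subspace, and by Lemma~\ref{L:sigmasubspace}$(iv)$ the space cannot be WLD.

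\textbf{Main obstacle.} The delicate point is the careful handling of the duality $(L^1(\mu_\gamma,E))^*=L^\infty(\mu_\gamma,E)$ and the semifinite (rather than $\sigma$-finite) setting: one must ensure the partition into finite pieces genuinely recovers $L^1(\mu,E)$ as an $\ell_1$-sum and that the dual identification does not silently use $\sigma$-finiteness. Reflexivity of $E$ (hence the Radon--Nikod\'ym property) is essential here and is the hypothesis that makes the $L^\infty$ description of the dual valid; verifying the measure-theoretic reduction and this duality cleanly is the part that requires the most care.
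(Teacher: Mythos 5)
Your overall architecture matches the paper's: decompose $L^1(\mu,E)$ as an $\ell_1$-sum over finite-measure pieces, identify the dual of each piece as $E^*$-valued $L^\infty$ via reflexivity and the Radon--Nikod\'ym property, and read off the $\Sigma$-subspace coordinatewise. (One caveat on the reduction: a general semifinite measure need not admit a literal partition into finite pieces in the strictly localizable sense; the paper instead takes, by Zorn's lemma, a \emph{maximal} family $\mathcal B$ of pairwise almost disjoint sets of finite positive measure. Maximality plus the fact that every Bochner-integrable function lives on a $\sigma$-finite set is what makes $L^1(\mu,E)\cong\bigl(\bigoplus_{B\in\mathcal B}L^1(\mu|_B,E)\bigr)_{\ell_1}$ work; you flagged this as the delicate point, and this is how it is resolved.)

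There is, however, a genuine gap in your verification that $S$ is a $\Sigma$-subspace. You propose the linearly dense set $A=\bigcup_{\gamma}L^1(\mu_\gamma,E)$ and claim that $\{x\in A: f(x)\ne 0\}$ is countable precisely when $\{\gamma: f_\gamma\ne 0\}$ is countable. The ``only if'' direction fails badly: as soon as a single coordinate $f_{\gamma_0}$ is nonzero, it is nonzero on uncountably many elements of $L^1(\mu_{\gamma_0},E)\subset A$, so $\{x\in A: f(x)\ne0\}$ is uncountable for every nonzero $f$ and the induced $\Sigma$-subspace would be $\{0\}$, not $S$. The missing idea is that one must first know each summand $L^1(\mu_\gamma,E)$ is WLD and then choose, \emph{inside each summand}, a linearly dense witnessing set $S_\gamma$ on which every functional of $L^\infty(\mu_\gamma,E^*)$ is countably supported; the set $A=\bigcup_\gamma S_\gamma$ then induces exactly $S$. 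This is precisely what the paper does: Proposition~\ref{p Bochner WCG} (the map $L^2(\mu_\gamma,E)\to L^1(\mu_\gamma,E)$, using reflexivity of $L^2(\mu_\gamma,E)$) shows each summand is WCG, hence WLD, and \cite[Lemma 4.34]{Kal2000} then yields that $S$ is a $1$-norming $\Sigma$-subspace of the $\ell_1$-sum. You do have the WCG ingredient in hand --- you invoke it for the $\sigma$-finite case --- but you need it already at this earlier stage. Your remaining points (the $1$-norming estimate, $\sigma$-finiteness of $\mu$ versus countability of $\Gamma$, and the use of Lemma~\ref{L:sigmasubspace}$(iv)$ to rule out WLD when $\Gamma$ is uncountable) are fine; the last is a slightly different and arguably cleaner route than the paper's, which instead observes that an uncountable $\ell_1$-sum contains $\ell_1(\omega_1)$ and that WLD passes to subspaces.
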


\begin{proposition}\label{p Bochner WCG} Let $\mu$ be a finite measure, and let $E$ be a reflexive Banach space. Then $L^1(\mu,E)$ is weakly compactly generated.
\end{proposition}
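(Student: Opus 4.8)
The plan is to realize $L^1(\mu,E)$ as the image of a reflexive space under a bounded operator with dense range, which automatically forces it to be weakly compactly generated. The natural candidate for the reflexive space is the vector-valued $L^2$-space $L^2(\mu,E)$.

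First I would recall that $L^2(\mu,E)$ is reflexive. Since $E$ is reflexive, both $E$ and $E^*$ are reflexive and hence enjoy the Radon--Nikod\'ym property; consequently the standard duality for Bochner spaces gives $L^2(\mu,E)^*=L^2(\mu,E^*)$ and, applying this once more, $L^2(\mu,E)^{**}=L^2(\mu,E^{**})=L^2(\mu,E)$, so that $L^2(\mu,E)$ is reflexive. (Alternatively one may quote directly the known fact that $L^p(\mu,E)$ is reflexive for reflexive $E$ and $1<p<\infty$.)

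Next, since $\mu$ is finite, the Cauchy--Schwarz inequality yields $\|f\|_1\le\mu(\Omega)^{1/2}\|f\|_2$ for every $f$, so the canonical inclusion $T\colon L^2(\mu,E)\to L^1(\mu,E)$ is a well-defined bounded linear operator. Its range is dense: every simple function lies in $L^2(\mu,E)$ (because $\mu$ is finite, whence $L^\infty(\mu,E)\subset L^2(\mu,E)$), and simple functions are dense in $L^1(\mu,E)$ by the very construction of the Bochner space.

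Finally I would exploit the reflexivity of $L^2(\mu,E)$: its closed unit ball $\ball_{L^2(\mu,E)}$ is weakly compact, and since $T$ is bounded and hence weak-to-weak continuous, the image $K:=T(\ball_{L^2(\mu,E)})$ is a weakly compact subset of $L^1(\mu,E)$. The closed linear span of $K$ contains $T(L^2(\mu,E))$, which is dense in $L^1(\mu,E)$; thus $K$ is a weakly compact generating set and $L^1(\mu,E)$ is weakly compactly generated. The only step demanding genuine care is the reflexivity of the vector-valued $L^2$-space, which rests on the Radon--Nikod\'ym property of $E$ and $E^*$; every remaining step is a routine consequence of the finiteness of $\mu$.
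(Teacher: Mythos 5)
Your proposal is correct and follows essentially the same route as the paper: the identity map $T\colon L^2(\mu,E)\to L^1(\mu,E)$, bounded by Cauchy--Schwarz since $\mu$ is finite, with dense range, together with the reflexivity of $L^2(\mu,E)$ obtained from the Radon--Nikod\'ym property of $E$ and $E^*$ (the paper cites \cite[Theorem IV.1.1]{DieUhl} for exactly this). The only cosmetic difference is that the paper invokes density of countably valued functions where you use simple functions; both are standard and equivalent for this purpose.
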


\begin{proof} The proof is done similarly as in the scalar case (cf. \cite[Theorem 5.1]{Kal2008}). Let us consider the identity mapping $T:L^2(\mu,E)\to L^1(\mu,E)$.
By the Cauchy-Schwarz inequality we get $\|T\|\le\sqrt{\|\mu\|}$, hence $T$ is a bounded linear operator. Moreover, the range of $T$ is dense, since countably valued functions in $L^1(\mu,E)$ are dense in the latter space. Finally, $L^2(\mu,E)$ is reflexive because $E$ and $E^*$ have Radon-Nikod\'{y}m property (see
\cite[Theorem IV.1.1]{DieUhl}). Thus, $L^1(\mu,E)$ is indeed weakly compactly generated.
\end{proof}

Remark: Note that if $E$ is isomorphic to a Hilbert space, then we can even conclude that $L^1(\mu,E)$ is Hilbert generated, since in this
case $L^2(\mu,E)$ is also isomorphic to a Hilbert space. Indeed, if $E$ is even isometric to a Hilbert space, the norm on
$L^2(\mu,E)$ is induced by the scalar product $$\langle f,g\rangle = \int \langle f(\omega),g(\omega)\rangle d\mu(\omega).$$

\begin{proof}[Proof of Theorem \ref{t Bochner 1-Plichko}]
We imitate the proof of \cite[Theorem 5.1]{Kal2008}. Let $\mathcal{B}\subset\Sigma$ be a maximal family  with the following
properties:
\begin{itemize}
	\item $0<\mu(B)<+\infty$ for each $B\in \mathcal{B}$;
	\item $\mu(B_1\cap B_2)=0$ for each $B_1,B_2\in\mathcal{B}$ distinct.
\end{itemize}
The existence of such a family follows immediately from Zorn's lemma.\smallskip

Take any separable-valued $\Sigma$-measurable function $f:\Omega\to E$.  Then clearly
$$\int \|f(\omega)\| d\mu(\omega)=\sum_{B\in\mathcal{B}}\int_B \|f(\omega)\| d\mu(\omega).$$
Therefore $L^1(\mu,E)$ is isometric to the $\ell_1$-sum of spaces $L^1(\mu_{|B},E)$, $B\in\mathcal{B}$. Since $\mu_{|B}$ is finite for each $B\in\mathcal{B}$, $L^1(\mu_{|B},E)$ is weakly compactly generated (and hence weakly Lindel\"{o}f determined) by the previous Proposition \ref{p Bochner WCG}. Further, it is clear that the dual of $L^1(\mu,E)$ is canonically isometric to the $\ell_\infty$-sum of the family $\{ (L^1(\mu_{|B},E))^* : B\in\mathcal{B}\}$. More concretely, since $E$ is reflexive, by \cite[Theorem IV.1.1]{DieUhl} we have $(L^1(\mu_{|B},E))^*= L^\infty(\mu_{|B},E^*)$ for each $B\in\mathcal{B},$ and hence
$$L^1(\mu,E)^*= \left(\bigoplus_{B\in\mathcal B} L^\infty(\mu_{|B},E^*)\right)_{\ell_\infty}.$$
Finally, it follows from \cite[Lemma 4.34]{Kal2000} that
$$S=\left\{ (f_B)_{B\in\mathcal B}\in \left(\bigoplus_{B\in\mathcal B} L^\infty(\mu_{|B},E^*)\right)_{\ell_\infty}:
\{B\in\mathcal B; f_B\ne 0\}\mbox{ is countable}\right\}$$
is a $1$-norming $\Sigma$-subspace of $(L^1(\mu,E))^*$.\smallskip

To prove the last statement, it is enough to observe that $\mu$ is $\sigma$-finite if and only if $\mathcal{B}$ is countable,
that a countable $\ell_1$-sum of weakly compactly generated spaces is again weakly compactly generated and that an
uncountable $\ell_1$-sum of nontrivial spaces contains $\ell_1(\omega_1)$ and hence is not weakly Lindel\"{o}f determined.
(Recall that WLD property passes to subspaces.)
\end{proof}

\begin{proposition}\label{P:Cartanpredual}
Let $A$ be a  commutative von Neumann algebra and $C$ a Cartan factor. Then $(A\overline{\otimes}C)_*= A_* \widehat{\otimes}_{\pi} C_*$.
\end{proposition}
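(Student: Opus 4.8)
The plan is to run through the six types of Cartan factors $C$ in the Horn--Neher classification, exploiting that types 1--3 are ranges of bicontractive projections while types 4--6 are reflexive. Throughout I would identify the commutative von Neumann algebra $A$ with $L^\infty(\mu)$ for a suitable measure $\mu$, so that $A_*=L^1(\mu)$ and $A=C(\Omega)$ with $\Omega$ the (hyperstonean) spectrum of $A$.

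First I would dispatch types 1, 2 and 3. As recalled just after the definition of each of these factors, $C$ is the range of a bicontractive projection $P$ on a von Neumann algebra $W$ (namely $W=B(H)$, or $W=B(K)$ for a type 1 factor $B(H,K)$ with $\dim K>\dim H$). Since the von Neumann tensor product $A\overline{\otimes}C$ is by definition the weak$^*$-closure of $A\otimes C=A\otimes P(W)$ inside $A\overline{\otimes}W$, it coincides with the space $A\overline{\otimes}P(W)$ of Lemma~\ref{l lifting bicontractive projections to vN tensor products}, and part $(iii)$ of that lemma yields directly
$$(A\overline{\otimes}C)_*=A_*\widehat{\otimes}_\pi(P(W))_*=A_*\widehat{\otimes}_\pi C_*.$$
(When $C=B(H)$ is itself a von Neumann algebra this is just \eqref{eq:predualtensor}.)

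For the reflexive types 4, 5 and 6 I would argue through Bochner spaces. Since $C$ is reflexive, so is $C_*$, and the classical identification of the projective tensor product with a Bochner space gives $A_*\widehat{\otimes}_\pi C_*=L^1(\mu)\widehat{\otimes}_\pi C_*=L^1(\mu,C_*)$. On the other side I would identify $A\overline{\otimes}C$ with $L^\infty(\mu,C)$: for the exceptional types 5 and 6 this is immediate from the definition $A\overline{\otimes}C=C(\Omega,C)$ together with the fact that on the hyperstonean $\Omega$ a continuous $C$-valued function (with $C$ finite dimensional) is nothing but an element of $L^\infty(\mu,C)$; for the spin factor $C_4$ one realises it as a weak$^*$-closed subtriple of some $B(H)$ and uses the direct-integral description $A\overline{\otimes}B(H)=L^\infty_{w^*}(\mu,B(H))$ to see that the weak$^*$-closure of $A\otimes C_4$ consists exactly of the decomposable functions taking values in $C_4$, i.e. $L^\infty(\mu,C_4)$. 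Because $C_*$ and $C=(C_*)^*$ then have the Radon--Nikod\'ym property, \cite[Theorem IV.1.1]{DieUhl} (as already used in the proof of Theorem~\ref{t Bochner 1-Plichko}) gives $L^1(\mu,C_*)^*=L^\infty(\mu,(C_*)^*)=L^\infty(\mu,C)$. As the predual of the JBW$^*$-triple $A\overline{\otimes}C$ is unique, I conclude $(A\overline{\otimes}C)_*=L^1(\mu,C_*)=A_*\widehat{\otimes}_\pi C_*$.

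The routine parts are the tensor-product bookkeeping and the scalar identity $C(\Omega)=L^\infty(\mu)$; the main obstacle is the identification $A\overline{\otimes}C=L^\infty(\mu,C)$ for the spin factor, where one must reconcile the von Neumann (decomposable, weak$^*$-measurable) tensor product with the Bochner space $L^\infty(\mu,C_4)$ and control the possible non-separability of $C_4$. A clean alternative that sidesteps this, if available, would be to exhibit $C_4$ as the range of a weak$^*$-continuous \emph{contractive} projection on $B(H)$; then Lemma~\ref{L:contractiveproj}$(iii)$ applies verbatim as in types 1--3, and the reflexive analysis above is needed only for the genuinely exceptional types 5 and 6.
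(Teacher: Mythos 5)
Your treatment of types 1--3 is exactly the paper's: realise $C$ as the range of a bicontractive projection $P$ on a von Neumann algebra $W$ and invoke Lemma~\ref{l lifting bicontractive projections to vN tensor products}$(iii)$. For the exceptional types 5 and 6 your Bochner-space detour is a harmless variant of the paper's one-line argument (the paper simply notes $(A_*\widehat{\otimes}_{\pi}C_*)^*=B(A_*,C)=A\widehat{\otimes}_{\varepsilon}C$ by finite-dimensionality of $C$). The real divergence, and the one genuine weak point, is the spin factor. Your primary route asserts that the weak$^*$-closure of $A\otimes C_4$ in $A\overline{\otimes}B(H)$ is the Bochner space $L^\infty(\mu,C_4)$ via a direct-integral picture; this is precisely the step you yourself flag as the obstacle, and it is not routine: the decomposable-operator description of $A\overline{\otimes}B(H)$ is usually developed for separable $H$, and for a non-separable $C_4$ one must reconcile weak$^*$-measurable decomposable fields with the essentially separably valued functions that constitute the Bochner space $L^\infty(\mu,C_4)$ appearing in the duality of \cite[Theorem IV.1.1]{DieUhl}. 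As written this step is a gap. The ``clean alternative'' you mention is exactly what the paper does, and it is indeed available: by \cite[Lemma 2.3]{EffStor} the spin factor is the range of a unital positive contractive projection $P$ on some $B(H)$; since $P$ need not be weak$^*$-continuous a priori, the paper passes to $P^{**}$ acting on the von Neumann algebra $B(H)^{**}$, which is automatically weak$^*$-to-weak$^*$-continuous and still has range $C$ by Goldstine's theorem and the reflexivity of $C$, so Lemma~\ref{L:contractiveproj}$(iii)$ applies verbatim. With that substitution your proof coincides with the paper's on types 1--4 and differs only cosmetically on types 5 and 6.
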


\begin{proof}
If $C$ is a Cartan factor of type 1, 2 or 3, then $C$ is the range of a bicontractive projection on a von Neumann
algebra and hence the equality follows from Lemma~\ref{l lifting bicontractive projections to vN tensor products}.\smallskip

If $C$ is a type 4 Cartan factor, it follows from \cite[Lemma 2.3]{EffStor} that $C$ is the range of a
(unital positive) contractive projection
$P:B(H)\to B(H)$ where $H$ is an appropriate Hilbert space.
{The mapping $P^{**}: B(H)^{**}\to B(H)^{**}$ is a weak$^*$-to-weak$^*$-continuous contractive projection
on the von Neumann algebra $B(H)^{**}$ whose range is $C$ by (Goldstine's theorem and) reflexivity of $C$.}
Hence the desired equality follows from Lemma~\ref{L:contractiveproj}.\smallskip

If $C$ is a Cartan factor of type 5 or 6, then it is finite-dimensional and $A\overline{\otimes}C$ is defined to be the
injective tensor product. Further, by \cite[3.2]{DefFlo} or \cite[p. 24]{Ryan2002} we get $(A_* \widehat{\otimes}_{\pi} C_*)^*=B(A_*,C)$ which coincides
with the injective tensor product $A\widehat{\otimes}_{\varepsilon} C$, as $C$ has finite dimension.
\end{proof}

\begin{lemma}\label{L:ellinftysum} Let $(M_\gamma)_{\gamma\in\Gamma}$ be an indexed family of JBW$^*$-triples, and let us denote $\displaystyle M=\left(\bigoplus_{\gamma\in\Gamma} M_\gamma\right)_{\ell_\infty}.$ Then
$$M_\sigma = \Big\{ (x_\gamma)_{\gamma\in\Gamma}\in M: x_\gamma\in (M_{\gamma})_\sigma\mbox{ for }\gamma\in\Gamma
\ \&\ \{\gamma\in\Gamma : x_\gamma\ne 0\}\mbox{ is countable}\Big\}.$$
\end{lemma}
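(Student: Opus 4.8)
The plan is to exploit that in the $\ell_\infty$-sum $M=\left(\bigoplus_{\gamma}M_\gamma\right)_{\ell_\infty}$ the triple product is computed coordinatewise, and to reduce the statement to a description of which tripotents of $M$ are $\sigma$-finite. Throughout I use the characterization, recorded right after the definition of $M_\sigma$, that $M_\sigma=\{x\in M: r(x)\text{ is a $\sigma$-finite tripotent}\}$, and likewise $(M_\gamma)_\sigma=\{z\in M_\gamma: r(z)\text{ is $\sigma$-finite}\}$. The two facts I must establish are: $(1)$ the range tripotent is computed coordinatewise, $r(x)=(r(x_\gamma))_\gamma$; and $(2)$ a tripotent $e=(e_\gamma)_\gamma$ of $M$ is $\sigma$-finite in $M$ if and only if each $e_\gamma$ is $\sigma$-finite in $M_\gamma$ and $\{\gamma:e_\gamma\ne0\}$ is countable. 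Granting these, the conclusion is immediate: $x\in M_\sigma$ iff $r(x)=(r(x_\gamma))_\gamma$ is $\sigma$-finite iff each $r(x_\gamma)$ is $\sigma$-finite and $\{\gamma:r(x_\gamma)\ne0\}$ is countable; since $r(x_\gamma)\ne0$ exactly when $x_\gamma\ne0$ and $r(x_\gamma)$ is $\sigma$-finite exactly when $x_\gamma\in(M_\gamma)_\sigma$, this is precisely membership in the set on the right-hand side.

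For $(1)$, I would use that $M_*=\left(\bigoplus_\gamma (M_\gamma)_*\right)_{\ell_1}$ and that the odd roots $x^{[\frac{1}{2n-1}]}$, being given by functional calculus inside the subtriple generated by $x$, are computed coordinatewise, so that $x^{[\frac{1}{2n-1}]}=(x_\gamma^{[\frac{1}{2n-1}]})_\gamma$; these are uniformly bounded. Testing against an arbitrary $\varphi=(\varphi_\gamma)\in M_*$ and using that each $x_\gamma^{[\frac{1}{2n-1}]}$ weak$^*$-converges to $r(x_\gamma)$, a dominated-convergence argument for the series $\sum_\gamma\varphi_\gamma(x_\gamma^{[\frac{1}{2n-1}]})$ shows that $(x^{[\frac{1}{2n-1}]})$ weak$^*$-converges to $(r(x_\gamma))_\gamma$; since it also weak$^*$-converges to $r(x)$, we obtain $r(x)=(r(x_\gamma))_\gamma$.

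Statement $(2)$ is the heart of the argument. For the ``only if'' direction I use the definition of $\sigma$-finiteness directly: writing $e^{(\gamma)}$ for the tripotent of $M$ agreeing with $e_\gamma$ in the coordinate $\gamma$ and vanishing elsewhere, one checks $e^{(\gamma)}\le e$ (the difference $e-e^{(\gamma)}$ has support disjoint from $e^{(\gamma)}$, hence is an orthogonal tripotent). If $\{\gamma:e_\gamma\ne0\}$ were uncountable, then $(e^{(\gamma)})$ would be an uncountable orthogonal family of nonzero tripotents below $e$; and if some $e_{\gamma_0}$ majorized an uncountable orthogonal family in $M_{\gamma_0}$, embedding it into the coordinate $\gamma_0$ would again produce such a family below $e$ --- either way contradicting $\sigma$-finiteness of $e$. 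For the ``if'' direction, let $C=\{\gamma:e_\gamma\ne0\}$ be countable with each $e_\gamma$ ($\gamma\in C$) $\sigma$-finite; by \cite[Theorem 3.2]{EdRu98} choose norm-one $\varphi_\gamma\in(M_\gamma)_*$ with support tripotent $e_\gamma$, enumerate $C=\{\gamma_n\}$, and set $\varphi=\sum_n 2^{-n}\varphi_{\gamma_n}\in M_*$, which has norm one. Using that $P_2(e)$, the JBW$^*$-algebra $M_2(e)$ and its positive cone all decompose coordinatewise, I would verify that $\varphi=\varphi P_2(e)$ and that $\varphi|_{M_2(e)}$ is a faithful normal state; by the uniqueness in \cite[Proposition 2]{FriRu85} this forces $e=e(\varphi)$, whence $e$ is $\sigma$-finite by \cite[Theorem 3.2]{EdRu98}.

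The main obstacle is statement $(2)$, and within it the ``if'' direction: one must manufacture a single normal functional on the whole sum whose support tripotent is exactly $e$, which requires combining the coordinate functionals with summable weights and checking faithfulness of the restriction to $M_2(e)$ coordinate by coordinate. The coordinatewise decompositions of $P_2(e)$, of $M_2(e)$ as a JBW$^*$-algebra, and of its positive cone are what make this verification go through; everything else is routine bookkeeping with the $\ell_\infty$/$\ell_1$ duality.
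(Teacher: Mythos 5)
Your proposal is correct and follows essentially the same route as the paper, which reduces the lemma to the single observation that a tripotent $e=(e_\gamma)_{\gamma\in\Gamma}$ of the $\ell_\infty$-sum is $\sigma$-finite if and only if each $e_\gamma$ is $\sigma$-finite and only countably many are nonzero. You simply supply the details the paper leaves implicit (the coordinatewise computation of $r(x)$ via the bounded odd roots and $\ell_1/\ell_\infty$ duality, and the construction of the functional $\sum_n 2^{-n}\varphi_{\gamma_n}$ with support tripotent $e$), and these details check out.
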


\begin{proof} This follows easily if we observe that $e=(e_\gamma)_{\gamma\in\Gamma}\in M$ is a tripotent if and only if $e_\gamma$ is a tripotent for each $\gamma$ and, moreover, $e$ is $\sigma$-finite if and only if each $e_\gamma$ is $\sigma$-finite and only countably many $e_\gamma$ are nonzero.
\end{proof}

\begin{proposition}\label{P:summandsBochner}
Let $A$ be a commutative von Neumann algebra and $C$ a reflexive Cartan factor. {\rm(}This applies, in particular, to Cartan factors of types 4, 5 and 6.{\rm)} Let $M=A\overline{\otimes}C$. Then $M_\sigma$ is a $1$-norming $\Sigma$-subspace of
$M=(M_*)^*$, and hence $M_*$ is $1$-Plichko. Furthemore,  $M_*$ is  weakly Lindel\"{o}f determined if and only if $A$ is $\sigma$-finite. In such a case $M_*$ is even weakly compactly generated.
\end{proposition}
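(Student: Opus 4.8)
The plan is to recognize $M_*$ as a vector-valued $L^1$-space and then quote Theorem~\ref{t Bochner 1-Plichko}. By Proposition~\ref{P:Cartanpredual} we have $M_*=A_*\widehat{\otimes}_{\pi}C_*$. Writing the commutative von Neumann algebra $A$ as $L^\infty(\mu)$ for a non-negative (localizable, hence semifinite) measure $\mu$, we get $A_*=L^1(\mu)$, and since $C$ is reflexive the space $E:=C_*$ is reflexive with $E^*=C$. The classical isometry $L^1(\mu)\widehat{\otimes}_{\pi}E=L^1(\mu,E)$ then identifies $M_*$ with $L^1(\mu,E)$. Theorem~\ref{t Bochner 1-Plichko} now applies verbatim: it yields that $M_*$ is $1$-Plichko, that $M_*$ is weakly Lindel\"of determined precisely when $\mu$ is $\sigma$-finite (equivalently, when $A$ is $\sigma$-finite), and that it is then weakly compactly generated. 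It also produces a decomposition $\mu=\bigoplus_{\gamma\in\Gamma}\mu_\gamma$ into finite measures together with a concrete $1$-norming $\Sigma$-subspace $S$ of $M$.

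It remains to check that this $S$ is exactly $M_\sigma$. Dualizing the isometry $M_*=\left(\bigoplus_{\gamma}L^1(\mu_\gamma,E)\right)_{\ell_1}$ gives $M=\left(\bigoplus_{\gamma}M_\gamma\right)_{\ell_\infty}$ with $M_\gamma:=(L^1(\mu_\gamma,E))^*=L^\infty(\mu_\gamma,E^*)=A_\gamma\overline{\otimes}C$, where $A_\gamma=L^\infty(\mu_\gamma)$ and $E^*=C$; under this identification $S$ becomes the set of countably supported families $(x_\gamma)_\gamma$ in $M$. On the other hand, Lemma~\ref{L:ellinftysum} describes $M_\sigma$ as the set of countably supported families $(x_\gamma)_\gamma$ with each $x_\gamma\in(M_\gamma)_\sigma$. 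Thus $M_\sigma=S$ is equivalent to $(M_\gamma)_\sigma=M_\gamma$ for every $\gamma$, i.e.\ to each $M_\gamma=A_\gamma\overline{\otimes}C$ being $\sigma$-finite.

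Proving this $\sigma$-finiteness is the main obstacle. Since $\mu_\gamma$ is finite, $(M_\gamma)_*=L^1(\mu_\gamma,C_*)$ is weakly compactly generated by Proposition~\ref{p Bochner WCG}, hence weakly Lindel\"of determined (as WCG implies WLD, \cite{AmirLind68}). I would then use the self-contained fact that \emph{a JBW$^*$-triple whose predual is WLD is $\sigma$-finite}. Indeed, if $(e_i)_{i\in I}$ were an uncountable family of mutually orthogonal nonzero tripotents in $M_\gamma$, choose for each $i$ a support functional $\varphi_i\in(M_\gamma)_*$ of norm one with $e(\varphi_i)=e_i$; since $e_j\in(M_\gamma)_0(e_i)$ for $j\ne i$ we have $\varphi_i(e_j)=\delta_{ij}$, and as orthogonal tripotents satisfy $\|\sum_{i\in F}\lambda_i e_i\|=\max_i|\lambda_i|$, testing against suitable unimodular combinations shows that $(\lambda_i)\mapsto\sum_i\lambda_i\varphi_i$ is an isometric embedding of $\ell_1(I)$ into $(M_\gamma)_*$. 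As $\ell_1(\omega_1)$ is not WLD and WLD passes to subspaces, this contradicts the WLD-ness of $(M_\gamma)_*$. Hence $I$ is countable, $M_\gamma$ is $\sigma$-finite, and the proof is complete.

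Finally, two remarks. The identification $M_\gamma=L^\infty(\mu_\gamma,C)$ and the duality $M=(M_*)^*$ rest on $(L^1(\mu_\gamma,C_*))^*=L^\infty(\mu_\gamma,(C_*)^*)$, which is legitimate because the reflexive space $C_*$ has the Radon--Nikod\'ym property (\cite[Theorem IV.1.1]{DieUhl}). A less uniform route to the $\sigma$-finiteness of $M_\gamma$ is available for Cartan factors of types $1$, $2$ and $3$: reflexivity then forces finite dimension, so $C$ is the range of a bicontractive projection on a finite-dimensional $B(H)$; lifting it via Lemma~\ref{l lifting bicontractive projections to vN tensor products} and applying Proposition~\ref{p sigma-finiteness of bicontractively complemented subtriples} to the $\sigma$-finite algebra $A_\gamma\overline{\otimes}B(H)$ gives the claim. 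The only infinite-dimensional reflexive Cartan factor, the spin factor of type $4$, as well as the exceptional types $5$ and $6$, are not covered by this shortcut, so the WLD argument above is the one I would keep.
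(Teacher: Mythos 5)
Your proof is correct and follows essentially the same route as the paper: identify $M_*$ with $L^1(\mu,C_*)$ via Proposition~\ref{P:Cartanpredual}, invoke Theorem~\ref{t Bochner 1-Plichko}, and reduce the identification $S=M_\sigma$ via Lemma~\ref{L:ellinftysum} to the $\sigma$-finiteness of each finite-measure piece $L^\infty(\mu_\gamma,C)$. The one place where you diverge is in justifying that last step: the paper simply notes that the predual of $L^\infty(\mu_\gamma,C)$ is WCG (Proposition~\ref{p Bochner WCG}) and refers back to Theorem~\ref{t Bochner 1-Plichko}, whereas you prove directly the implication ``WLD predual $\Rightarrow$ $\sigma$-finite triple'' by embedding $\ell_1(\omega_1)$ isometrically through support functionals of an uncountable orthogonal family of tripotents. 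This is a genuinely self-contained and arguably cleaner justification than the paper's citation. One small imprecision: you cannot in general choose $\varphi_i$ with $e(\varphi_i)=e_i$, since by \cite[Theorem 3.2]{EdRu98} only $\sigma$-finite tripotents arise as support tripotents; either first replace each $e_i$ by a nonzero $\sigma$-finite tripotent below it (possible by \cite[Theorem 3.4$(ii)$]{EdRu98}), or just take any norm-one $\varphi_i\in M_*$ with $\varphi_i(e_i)=1$ and observe that $e(\varphi_i)\le e_i$ forces $\varphi_i(e_j)=0$ for $j\ne i$. With that one-line fix the argument is complete.
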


\begin{proof}
If $A$ is a commutative von Neumann algebra, by \cite[Theorem III.1.18]{Tak} it can be represented as $L^\infty(\Omega,\mu)$,
where $\Omega$ is a locally compact space and $\mu$ a positive Radon measure on $\Omega$. In fact, $\Omega$ is the topological sum of a family of compact spaces $(K_\gamma)_{\gamma\in\Gamma}$. Then the predual of $A$ is identified with
$$L^1(\Omega,\mu)=\left(\bigoplus_{\gamma\in\Gamma} L^1(K_\gamma,\mu|_{K_\gamma})\right)_{\ell_1}.$$
Since
$$(A\overline{\otimes}C)_*=A_* \widehat{\otimes}_{\pi} C_*=L^1(\mu,C_*),$$
we can use Theorem~\ref{t Bochner 1-Plichko}. To complete the proof it is enough to show that $S=M_\sigma$, where
$S$ is the $\Sigma$-subspace provided by Theorem~\ref{t Bochner 1-Plichko}.
Since
$$M=\left(\bigoplus_{\gamma\in\Gamma} L^\infty(K_\gamma,\mu|_{K_\gamma},C)\right)_{\ell_\infty},$$
due to Lemma~\ref{L:ellinftysum}, it is enough to show that $L^\infty(\mu,C)$ is $\sigma$-finite whenever
$\mu$ is finite. But, in this case, its predual, $L^1(\mu,C_*),$ is weakly compactly generated by Proposition~\ref{p Bochner WCG}, thus
$L^\infty(\mu,C)$ is $\sigma$-finite by Theorem~\ref{t Bochner 1-Plichko}.
\end{proof}

\begin{proof}[Proof of Theorem \ref{t predual of JBW*-triples are 1-Plichko without submodels}]
{We have already mentioned that it is enough to show  \eqref{eq:Msigma}.}
Let $M$ be a JBW$^*$-triple and consider the decomposition \eqref{eq decomposition of JBW*-triples}.
By Propositions~\ref{P:summandsbicontractive} and~\ref{P:summandsBochner} each summand fulfills \eqref{eq:Msigma}.
Further, Lemma~\ref{L:ellinftysum} {and} \cite[Lemma 4.34]{Kal2000} yield the validity of \eqref{eq:Msigma} for $M$.
\end{proof}

In passing we remark that from Theorem \ref{t predual of JBW*-triples are 1-Plichko without submodels} (and the general facts on Plichko spaces)
we have that $M_\sigma$ is norm-closed and even weak$^*$-countably closed; it is additionally weak$^*$-closed if and only if $M$ is $\sigma$-finite.

\section{Structure of the space $M_\sigma$}

In the previous section we proved that, for any JBW$^*$-triple $M$, $M_\sigma$ is a $1$-norming $\Sigma$-subspace of $M=(M_*)^*$.
If $M$ is $\sigma$-finite, it is the only $1$-norming $\Sigma$-subspace and coincides with the whole $M$. If $M$ is not $\sigma$-finite,
there may be plenty of different $1$-norming $\Sigma$-subspaces (cf. \cite[Example 6.9]{Kal2000}). However, $M_\sigma$ is the only
canonical $1$-norming $\Sigma$-subspace. What we mean by this statement is in the content of the following theorem.\smallskip

\begin{theorem}\label{T:unique} Let $M$ be a JBW$^*$-triple. Then $M_\sigma$ is a norm-closed inner ideal in $M$. Moreover,
it is the only $1$-norming $\Sigma$-subspace which is also an inner ideal.
\end{theorem}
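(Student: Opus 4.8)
The plan is to prove the two assertions in turn, using throughout the two descriptions of $M_\sigma$ recorded just before Theorem~\ref{t predual of JBW*-triples are 1-Plichko without submodels}, namely that $M_\sigma=\bigcup\{M_2(e):e\text{ a }\sigma\text{-finite tripotent}\}$, together with the fact \eqref{eq:Msigma} that $M_\sigma$ is already a $\Sigma$-subspace of $M$. Norm-closedness and linearity of $M_\sigma$ are then immediate from Lemma~\ref{L:sigmasubspace}$(i)$. For the inner-ideal property it suffices, by linearity and the polarization $\{a,b,c\}=\frac12(\{a+c,b,a+c\}-\{a,b,a\}-\{c,b,c\})$, to check that $\{a,b,a\}\in M_\sigma$ for $a\in M_\sigma$, $b\in M$. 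Putting $e=r(a)$, which is $\sigma$-finite, we have $a\in M_2(e)$, and Peirce arithmetic \eqref{eq peirce rules1}--\eqref{eq peirce rules2} gives $\{M_2(e),M_j(e),M_2(e)\}\subseteq M_{4-j}(e)$, so only the $j=2$ term survives and $\{a,b,a\}=\{a,P_2(e)b,a\}\in M_2(e)$. Since $e$ is $\sigma$-finite, Lemma~\ref{L:sigmafinitetrip}$(iii)$ yields $M_2(e)\subseteq M_\sigma$, whence $\{a,b,a\}\in M_\sigma$.

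For uniqueness, let $S$ be any $1$-norming $\Sigma$-subspace which is an inner ideal; the target is $S=M_\sigma$. I would first reduce this to the single inclusion $M_\sigma\subseteq S$: once this holds, $S\cap M_\sigma=M_\sigma$ is $1$-norming, and as both $S$ and $M_\sigma$ are $\Sigma$-subspaces (hence satisfy $(i)$ and $(ii)$ of Lemma~\ref{L:sigmasubspace}), Lemma~\ref{L:sigmasubspace}$(iii)$ forces $S=M_\sigma$. To obtain $M_\sigma\subseteq S$ it is enough, since $S$ is an inner ideal, to show that every $\sigma$-finite tripotent $e$ lies in $S$: for then $P_2(e)y=\{e,\{e,y,e\},e\}\in\{S,M,S\}\subseteq S$ for all $y$, so $M_2(e)\subseteq S$, and $M_\sigma$ is the union of these subspaces. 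So I fix a $\sigma$-finite tripotent $e$ and $\varphi\in M_*$ with $\|\varphi\|=1$ and $e(\varphi)=e$; since $S$ is $1$-norming, Lemma~\ref{L:sigmasubspace}$(v)$ produces $x\in S$ with $\|x\|=1$ and $\varphi(x)=1$.

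The crux—and the step I expect to be the main obstacle—is to show that this norm-attaining $x$ dominates $e$ through its range tripotent. I would argue via the pre-hilbertian seminorm $\|\cdot\|_\varphi$ of Lemma~\ref{l hilbertian norm} by computing $\|x-e\|_\varphi^2=\varphi\{x-e,x-e,e\}$. Expanding, and using $\varphi=\varphi P_2(e)$ together with the facts that on $M_2(e)$ the operators $L(e,e)$ and $Q(e)$ act as the identity and as the involution $*_e$ while they annihilate $M_1(e)\oplus M_0(e)$, one finds $\varphi\{x,e,e\}=\varphi\{e,x,e\}=\varphi\{e,e,e\}=1$, whereas $\varphi\{x,x,e\}=\|x\|_\varphi^2\le\|x\|^2\le 1$. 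Hence $0\le\|x-e\|_\varphi^2=\|x\|_\varphi^2-1\le 0$, so $\|x-e\|_\varphi=0$ and therefore $x-e\in M_0(e)$ by Lemma~\ref{l hilbertian norm}; that is, $x=e+x_0$ with $x_0=P_0(e)x\perp e$. Splitting the functional calculus along this orthogonal decomposition gives $r(x)=e+r(x_0)$ with $e\perp r(x_0)$, i.e. $e\le r(x)$.

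It remains to deposit $r(x)$, and with it $e$, inside $S$. The norm-closed subtriple generated by $x$ lies in $S$, since the odd powers $x^{[2n+1]}=\{x,x,x^{[2n-1]}\}$ belong to $\{S,M,S\}\subseteq S$ by induction and $S$ is norm-closed; in particular the bounded sequence $x^{[1/(2n-1)]}\in S$ weak$^*$-converges to $r(x)$, so $r(x)\in S$ by the weak$^*$-countable closedness in Lemma~\ref{L:sigmasubspace}$(i)$. As $S$ is an inner ideal, $M_2(r(x))\subseteq S$, and since $e\le r(x)$ the tripotent $e$ is a projection in the JBW$^*$-algebra $M_2(r(x))$, so $e\in M_2(r(x))\subseteq S$. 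Thus every $\sigma$-finite tripotent lies in $S$, which gives $M_\sigma\subseteq S$ and, by the reduction above, completes the proof.
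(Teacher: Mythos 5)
Your proof is correct, and while the uniqueness half follows the same skeleton as the paper's, both halves contain genuinely different ingredients. For the inner-ideal property the paper proceeds via Proposition~\ref{p inner ideal of sigma finite range elements}: it fixes a complete tripotent, decomposes it into orthogonal $\sigma$-finite tripotents $(e_\lambda)$, identifies $M_\sigma$ with $\{x: \Lambda_x\text{ countable}\}$ (which needs Proposition~\ref{p characterization of sigma finite tripotents}), and verifies the inner-ideal property of that set by the Jordan identity; this route independently reproves that $M_\sigma$ is a linear subspace and yields an extra structural description. You instead take linearity for granted from \eqref{eq:Msigma} (legitimate, since Theorem~\ref{t predual of JBW*-triples are 1-Plichko without submodels} precedes Theorem~\ref{T:unique}) and then polarization plus the Peirce rule $\{M_2(e),M_j(e),M_2(e)\}\subseteq M_{4-j}(e)$ gives $\{a,b,a\}\in M_2(r(a))\subseteq M_\sigma$ — shorter, at the cost of the auxiliary characterization. (Two cosmetic slips: what you need is just $M_2(e)\subseteq M_\sigma$, which is the definition of $M_\sigma$ rather than Lemma~\ref{L:sigmafinitetrip}$(iii)$; and $L(e,e)$ does not annihilate $M_1(e)$ — it acts as $\tfrac12\,\mathrm{id}$ there — but this is harmless because $\varphi=\varphi P_2(e)$ kills the $M_1$-component anyway.) In the uniqueness part, the crux $e\le r(x)$ is where you diverge most: the paper invokes \cite[Proposition~2.5]{Pe15} to get $\phi(x^{[\frac{1}{2n+1}]})=1$, hence $\phi(r(x))=1$ by normality, and then \eqref{eq order in support tripotents}; you instead compute $\|x-e\|_\varphi^2=\|x\|_\varphi^2-1\le 0$ with the Barton--Friedman form and Lemma~\ref{l hilbertian norm}, obtaining the stronger conclusion $x-e\in M_0(e)$ and then $r(x)=e+r(P_0(e)x)\ge e$. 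This is valid (it uses $\varphi(y^{*_e})=\overline{\varphi(y)}$ for the state $\varphi|_{M_2(e)}$ and $\|\cdot\|_\varphi\le\|\cdot\|$) and has the advantage of being self-contained, avoiding the external reference; the remaining steps ($r(x)\in S$ via odd powers and weak$^*$-countable closedness, $e\in M_2(r(x))\subseteq S$, and the final appeal to Lemma~\ref{L:sigmasubspace}$(iii)$) coincide with the paper's.
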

\noindent
{The theorem will be proved at the end of this section.\smallskip

The following technical result provides a characterization of $\sigma$-finite tripotents which is required later.
We recall that, given a tripotent $u$ in a JBW$^*$-triple $M$, there exists a complete tripotent $w\in M$ such
that $u\leq w$ (see \cite[Lemma 3.12$(1)$]{Horn87}).\smallskip

\begin{proposition}\label{p characterization of sigma finite tripotents} Let $u$ be a tripotent in a JBW$^*$-triple $M$.
The following statements are equivalent:
\begin{enumerate}[$(a)$]\item $u$ is $\sigma$-finite;
\item There exist a $\sigma$-finite tripotent $v$ and a complete tripotent $w$ in $M$ such that $v\leq w$ and $(w-v)\perp u$.
\end{enumerate}
\end{proposition}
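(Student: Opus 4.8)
The plan is to prove the two implications separately; the forward implication is immediate, and the reverse one carries all the weight.

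For $(a)\Rightarrow(b)$ I would simply take $v=u$. Since $u$ is $\sigma$-finite by hypothesis and, by the fact recalled just before the statement (namely \cite[Lemma 3.12$(1)$]{Horn87}), there is a complete tripotent $w\in M$ with $u\leq w$, the element $w-v=w-u$ is by the very definition of the order a tripotent orthogonal to $u$. Thus $v\leq w$ and $(w-v)\perp u$, so $(b)$ holds.

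The real content is $(b)\Rightarrow(a)$. Put $s=w-v$; by definition of $v\leq w$ this is a tripotent with $s\perp v$, and by hypothesis $s\perp u$, so $u,v\in M_0(s)=:N$. First I would record that $N$ is a weak$^*$-closed inner ideal (hence a JBW$^*$-subtriple): weak$^*$-closedness comes from the weak$^*$-continuity of the Peirce projections, and \eqref{eq peirce rules2} gives $\J{M_0(s)}{M}{M_0(s)}=\J{M_0(s)}{M_0(s)}{M_0(s)}\subseteq M_0(s)$. The arithmetic fact I will use repeatedly is that $a\perp b$ implies $M_2(a)\subseteq M_0(b)$: for $a\in M_0(b)$ and $z\in M$, decomposing $z$ along the Peirce decomposition of $b$ and using \eqref{eq peirce rules2} shows $Q(a)z=\J{a}{z}{a}\in M_0(b)$ (the $M_1(b),M_2(b)$ parts land in $M_{-1}(b),M_{-2}(b)=\{0\}$), whence $M_2(a)=Q(a)^2M\subseteq M_0(b)$. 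Applied to $u\perp s$ and $v\perp s$, this guarantees that every tripotent dominated by $u$ or by $v$ already lies in $N$.

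I would then check that $v$ is a complete tripotent of $N$ that is $\sigma$-finite in $N$. For completeness, $L(s,v)=L(v,s)=0$ yields $L(w,w)=L(v,v)+L(s,s)$, so $N_0(v)=M_0(s)\cap M_0(v)=\ker L(s,s)\cap\ker L(v,v)\subseteq\ker L(w,w)=M_0(w)=\{0\}$, the last equality since $w$ is complete. For $\sigma$-finiteness in $N$, the previous paragraph shows that the tripotents $e\leq v$ are the same whether computed in $M$ or in $N$, with the same orthogonality relations (all governed by membership in $M_0(\cdot)$), so $v$ inherits $\sigma$-finiteness from $M$ (alternatively one may invoke \cite[Lemma 3.6(ii)]{EdRu98} for the weak$^*$-closed inner ideal $N$). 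Hence, by \cite[Theorem 3.2]{EdRu98}, there is a norm-one $\phi\in N_*$ whose support tripotent in $N$ is $v$; as $v$ is complete in $N$, Lemma~\ref{l hilbertian norm} applied inside $N$ shows the seminorm $\norm{\cdot}_{\phi}$ has kernel $N_0(v)=\{0\}$, i.e. $\phi$ is faithful on $N$. To conclude, assume $u\neq0$ and let $(u_\gamma)_{\gamma\in\Gamma}$ be pairwise orthogonal nonzero tripotents with $u_\gamma\leq u$; then each $u_\gamma\in M_2(u)\subseteq N$, and for finite $F\subseteq\Gamma$ the sum $\sum_{\gamma\in F}u_\gamma$ is a nonzero tripotent, hence of norm one. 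Evaluating the inner product with $z=v$ (legitimate since $\phi(v)=\norm{\phi}=\norm{v}=1$) and discarding the cross terms $\J{u_\gamma}{u_{\gamma'}}{v}=0$ for $\gamma\neq\gamma'$, I obtain $\sum_{\gamma\in F}\norm{u_\gamma}_{\phi}^2=\norm{\sum_{\gamma\in F}u_\gamma}_{\phi}^2\leq\norm{\sum_{\gamma\in F}u_\gamma}^2=1$, so $\sum_{\gamma\in\Gamma}\norm{u_\gamma}_{\phi}^2\leq1$; since faithfulness forces $\norm{u_\gamma}_{\phi}>0$, the index set $\Gamma$ is countable, and $u$ is $\sigma$-finite. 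The main obstacle I anticipate is precisely these two Peirce-arithmetic reductions — the inclusion $M_2(a)\subseteq M_0(b)$ and the splitting $L(w,w)=L(v,v)+L(s,s)$ that makes $v$ complete in $N$ — together with the bookkeeping showing that order, orthogonality, and $\sigma$-finiteness transfer faithfully between $M$ and $N=M_0(s)$; once these are in place, the faithful-functional estimate is routine.
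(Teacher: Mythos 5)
Your proof is correct and follows essentially the same route as the paper's: both implications hinge on reducing everything orthogonal to $w-v$ into $M_2(v)\oplus M_1(v)$, where the Barton--Friedman pre-hilbertian seminorm associated with a functional supported at $v$ becomes a genuine norm (Lemma~\ref{l hilbertian norm}), and then running the standard finite-sum counting argument against an uncountable orthogonal family below $u$. Your packaging via the subtriple $N=M_0(w-v)$, in which $v$ is complete, is only a mild reorganization of the paper's pointwise computation $P_i(w)u=P_i(v)u$ (both rest on $L(w,w)=L(v,v)+L(w-v,w-v)$ and the completeness of $w$), so no genuinely new ingredient is involved.
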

}
\begin{proof} The implication $(a)\Rightarrow (b)$ is clear with $v=u$.\smallskip

$(b)\Rightarrow (a)$ Suppose there exist a $\sigma$-finite tripotent $v$ and a complete tripotent $w$ in $M$ such that $v\leq w$
and $(w-v)\perp u$.
Writing $w=v+(w-v)$ and using successively the orthogonality of $w-v$ to $u$ and to $v$ we obtain
$\{w,w,u\}=\{w,v,u\}=\{v,v,u\},$ and hence $L(w,w)u=L(v,v)u$, and similarly $\{w,u,w\}=\{v,u,v\}$. Since $w-v\perp M_2(v)\ni \{v,u,v\}$,  it follows that $P_2(w)(u) =Q(w)^2 (u) =\{w,\{v,u,v\},w\}= \{v,\{v,u,v\},v\}=P_2(v) (u)$.
Therefore,  $P_2(w) (u) = P_2 (v) (u)$ and $P_1(w) (u) =2L(w,w)(u)-2P_2(w)(u) = P_1 (v) (u)$.
\smallskip

The completeness of $w$ assures that $u = P_2(w) (u) + P_1 (w) (u) = P_2(v) (u) + P_1 (v) (u)$ lies in $M_2 (v)\oplus M_1(v)$.\smallskip

We shall show now that $u$ is $\sigma$-finite.  Arguing by contradiction, assume there is an uncountable family $(u_{j})_{j\in \Gamma}$ of mutually orthogonal non-zero tripotents in $M$ with $u_{j}\leq u$ for every $j$ (see \cite[\S 3]{EdRu98}). Since $u_j\in M_2 (u)$ for every $j$ and $u\perp (w-v)$, it follows that $u_j\perp (w-v)$ for every $j\in \Gamma$. Arguing as above we obtain $u_j \in M_2 (v)\oplus M_1(v)$, for every $j\in \Gamma$.\smallskip

Having in mind that $v$ is $\sigma$-finite, we can find a norm one functional  $\phi_{v}\in M_*$ whose support tripotent is $v$
(see \cite[Theorem 3.2]{EdRu98}).
By Lemma~\ref{l hilbertian norm}, $\phi_{v}$ gives rise to a norm $\|\cdot\|_{\phi_v}$ on $M_2(v)\oplus M_1(v)$ defined by
$\|x\|_{\phi_v}=(\phi_v\{x,x,v\})^{1/2}$ ($x\in M_2(v)\oplus M_1(v)$).
As $u_j$ is a non-zero element in $M_2(v)\oplus M_1(v)$
by the preceding paragraph, we obtain $$ \phi_v\{u_j,u_j,v\}=\|u_j\|^2>0\,.$$

Therefore, there exists a positive constant $\Theta$ and an uncountable subset $\Gamma'\subseteq \Gamma$ such that $\phi_{v} \{u_{j},u_{j},v\}>\Theta$ for all $j\in \Gamma'$. Thus, for each natural $m$ we can find $j_1\neq j_2\neq \ldots\neq j_m\in \Gamma'$. Since the elements $u_{j_1},\ldots, u_{j_m}$ are mutually orthogonal, we get $$1 =\left\| \sum_{k=1}^m u_{j_k} \right\|^2\geq \left\| \sum_{k=1}^m u_{j_k} \right\|^2_{\phi_{v}} = \phi_{v} \left\{\sum_{k=1}^m u_{j_k}, \sum_{k=1}^m u_{j_k}, v \right\} $$ $$= \sum_{k=1}^m \phi_{v} \left\{ u_{j_k}, u_{j_k}, v \right\} > m \Theta,$$ which is impossible.
\end{proof}

To prove that $M_\sigma$ is an inner ideal, we need another representation of $M$. To this end fix a complete tripotent $e\in M$.
Applying Theorem 3.4$(ii)$ in \cite{EdRu98} we can find a family $(e_{\lambda})_{\lambda\in\Lambda}$ of mutually orthogonal
$\sigma$-finite tripotents in $M$ satisfying $\displaystyle e= \sum_{\lambda\in \Lambda} e_{\lambda}$. For each $x\in M$
let us define $$\Lambda_{x}:=\{ \lambda\in \Lambda : L(e_{\lambda},e_{\lambda}) (x)\neq 0\}.$$

\begin{proposition}\label{p inner ideal of sigma finite range elements} In the conditions above,  $$M_{\sigma} =\{ x\in M : \Lambda_{x} \hbox{ is countable }\},$$
and $M_{\sigma}$ is a norm-closed inner ideal of $M$.
\end{proposition}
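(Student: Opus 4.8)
The plan is to reduce the whole statement to the set-theoretic identity $M_\sigma=\{x\in M:\Lambda_x\text{ is countable}\}$, after which the remaining assertions come almost for free. The point is that for each $\lambda$ the kernel of $L(e_\lambda,e_\lambda)$ is exactly the Peirce-$0$ space $M_0(e_\lambda)$, which is a weak$^*$-closed inner ideal: by the Peirce rules $\{M_0(e_\lambda),M,M_0(e_\lambda)\}\subseteq M_0(e_\lambda)$. Hence if $a,c\in M_0(e_\lambda)$ then $\{a,b,c\}\in M_0(e_\lambda)$ for every $b\in M$, which gives $\Lambda_{\{a,b,c\}}\subseteq\Lambda_a\cup\Lambda_c$; together with the obvious $\Lambda_{x+y}\subseteq\Lambda_x\cup\Lambda_y$ and $\Lambda_{cx}=\Lambda_x$ this shows that $\{x:\Lambda_x\text{ countable}\}$ is a linear subspace and an inner ideal. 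Norm-closedness follows from continuity of each bounded operator $L(e_\lambda,e_\lambda)$: if $x_n\to x$ with each $\Lambda_{x_n}$ countable, then $\Lambda_x\subseteq\bigcup_n\Lambda_{x_n}$ is countable. So everything rests on the identity.

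For the inclusion $\{x:\Lambda_x\text{ countable}\}\subseteq M_\sigma$, I would take $x$ with $\Lambda_x$ countable and set $f=\sum_{\lambda\in\Lambda_x}e_\lambda$ and $g=e-f=\sum_{\lambda\notin\Lambda_x}e_\lambda$, both tripotents with $f\perp g$ and $f\le e$. Since $\Lambda_x$ is countable, $f$ is $\sigma$-finite by \cite[Theorem 3.4]{EdRu98}. For $\lambda\notin\Lambda_x$ one has $L(e_\lambda,e_\lambda)x=0$; as the cross terms $\{e_\lambda,e_\mu,x\}$ vanish for $\lambda\neq\mu$ (orthogonality) and the triple product is separately weak$^*$-continuous, $L(g,g)x=\sum_{\lambda\notin\Lambda_x}L(e_\lambda,e_\lambda)x=0$, i.e. $x\in M_0(g)$. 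As $M_0(g)$ is a weak$^*$-closed subtriple, the range tripotent $r(x)$ also lies in $M_0(g)$, so $r(x)\perp g=e-f$. Now Proposition~\ref{p characterization of sigma finite tripotents}, applied with $v=f$ and $w=e$ (complete, $f\le e$, $e-f\perp r(x)$), shows that $r(x)$ is $\sigma$-finite, that is, $x\in M_\sigma$.

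The reverse inclusion $M_\sigma\subseteq\{x:\Lambda_x\text{ countable}\}$ is the heart of the matter. Given $x\in M_\sigma$, put $u=r(x)$, a $\sigma$-finite tripotent, and recall $x\in M_2(u)$. First I note that $\lambda\in\Lambda_x$ forces $u\not\perp e_\lambda$: indeed, if $u\perp e_\lambda$ then $M_2(u)\subseteq M_0(e_\lambda)$ (a standard consequence of Peirce arithmetic for orthogonal tripotents, since $M_2(u)=Q(u)^2M$ and $u\in M_0(e_\lambda)$, the latter being an inner ideal), whence $x\in M_0(e_\lambda)$ and $\lambda\notin\Lambda_x$. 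It therefore suffices to prove that $\{\lambda:u\not\perp e_\lambda\}$ is countable. To this end I choose, by \cite[Theorem 3.2]{EdRu98}, a norm-one $\phi\in M_*$ with $e(\phi)=u$ and use the Barton--Friedman form: by Lemma~\ref{l hilbertian norm}, $\phi\{e_\lambda,e_\lambda,u\}=\|e_\lambda\|_\phi^2\ge 0$, and it is strictly positive precisely when $e_\lambda\notin M_0(u)$, i.e. when $u\not\perp e_\lambda$. Because $\{e_\lambda,e_\mu,u\}=0$ for $\lambda\neq\mu$, summing over any finite $F\subseteq\Lambda$ gives $\sum_{\lambda\in F}\phi\{e_\lambda,e_\lambda,u\}=\phi\{e_F,e_F,u\}\le\|e_F\|^2\|u\|\le 1$, where $e_F=\sum_{\lambda\in F}e_\lambda$. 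Hence $\sum_{\lambda\in\Lambda}\phi\{e_\lambda,e_\lambda,u\}\le 1$, so only countably many summands are nonzero; thus $\{\lambda:u\not\perp e_\lambda\}$, and a fortiori $\Lambda_x$, is countable.

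The main obstacle is this reverse inclusion, and more precisely the need to manufacture a summable nonnegative quantity out of the condition $\lambda\in\Lambda_x$ in order to run an uncountability-forcing argument. The device that resolves it is the positive Barton--Friedman form attached to the support functional of $u=r(x)$, combined with the orthogonality of the family $(e_\lambda)$, which simultaneously makes the diagonal terms nonnegative, kills the off-diagonal terms, and bounds every finite partial sum by $1$. The supporting facts are all Peirce-arithmetic bookkeeping --- that $M_0(e_\lambda)$ is an inner ideal, that $u\perp e_\lambda$ implies $M_2(u)\subseteq M_0(e_\lambda)$, and that weak$^*$-convergent orthogonal sums interact with the triple product through separate weak$^*$-continuity --- so the only genuinely analytic ingredient is the reduction to the convergent series just described.
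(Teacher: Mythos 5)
Your proof is correct and follows essentially the same route as the paper's: both directions rest on the same two ingredients, namely Proposition \ref{p characterization of sigma finite tripotents} applied with $v=\sum_{\lambda\in\Lambda_x}e_\lambda$ and $w=e$ for one inclusion, and the summability (equivalently, the uncountability-forcing bound) of the nonnegative quantities $\phi\{e_\lambda,e_\lambda,u\}$ coming from the Barton--Friedman form of a support functional of $u=r(x)$ for the other. Your minor variations --- deriving the inner-ideal property from the fact that each $M_0(e_\lambda)$ is an inner ideal rather than from the Jordan identity plus Peirce arithmetic, proving norm-closedness directly from continuity of $L(e_\lambda,e_\lambda)$ instead of invoking the $\Sigma$-subspace property, and obtaining $\Lambda_x\subseteq\Lambda_{r(x)}$ via $M_2(r(x))\subseteq M_0(e_\lambda)$ rather than via $x=\{r(x),x,r(x)\}$ --- are all valid and do not change the substance of the argument.
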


\begin{proof} Denote the set on the right-hand side by $M'_\sigma$. By the linearity of the Jordan product in the third variable it follows that $M'_\sigma$ is a linear subspace.
To show that it is an inner ideal, take $x,z\in M'_{\sigma}$ and $y\in M$. For each $\lambda\in \Lambda\backslash (\Lambda_{x}\cup\Lambda_z)$, we deduce via Jordan identity, that
$$\begin{aligned}L(e_{\lambda},e_{\lambda}) \J xyz &=\J{L(e_{\lambda},e_{\lambda}) x}yz - \J x{L(e_{\lambda},e_{\lambda})y}z +
\J xy{L(e_{\lambda},e_{\lambda})z}\\& =- \J x{L(e_{\lambda},e_{\lambda})y}z.\end{aligned}$$
Moreover, since $L(e_{\lambda},e_{\lambda}) x=L(e_{\lambda},e_{\lambda}) z=0$, we get $x,z\in M_{0} (e_{\lambda})$.
Since $P_0(e_{\lambda})y$ is in the $0$-eigenspace of $L(e_{\lambda},e_{\lambda})$ we have that
$L(e_{\lambda},e_{\lambda}) (y) \in M_1 (e_{\lambda})\oplus M_2 (e_{\lambda})$
and hence $\J x{L(e_{\lambda},e_{\lambda})(y)}z =0$ by Peirce arithmetic.
We have shown that $\Lambda_{\J xyz}\subseteq \Lambda_x\cup\Lambda_z,$ and thus $\Lambda_{\{x,y,z\}}$ is countable,
which proves that $\{x,y,z\}\in M'_{\sigma}$ and hence $M'_{\sigma}$ is an inner ideal of $M$.\smallskip

We continue by showing that $M_\sigma\subset M'_\sigma$. We shall first prove that $M'_\sigma$ contains all $\sigma$-finite tripotents in $M$.
Let $u$ be a $\sigma$-finite tripotent in $M$. We want to show that the set $\Lambda_u$ is countable.
We assume, on the contrary, that $\Lambda_u$ is uncountable. Let $\phi_{u}\in M_*$ be a norm one functional whose support tripotent
is $u$. For every $\lambda\in \Lambda_u$, we have that $e_\lambda\not\in M_0(u)$ because otherwise we would have
$L(e_{\lambda},e_{\lambda})(u)=0$.
Consequently, as in the proof of
Proposition~\ref{p characterization of sigma finite tripotents}, we deduce that $\phi_{u} \{e_{\lambda},e_{\lambda},u\}>0$. We can thus find a positive constant $\Theta$ and an uncountable subset $\Lambda_u'\subseteq \Lambda_u$ such that $\phi_{u} \{e_{\lambda},e_{\lambda},u\}>\Theta$ for all $\lambda\in \Lambda_u'.$ As before, for each natural $m$ we can find $\lambda_1\neq \lambda_2\neq \ldots\neq \lambda_m\in \Lambda_{u}'$. Then, applying the orthogonality of the elements $e_{\lambda_j}$ we get $$1 =\left\| \sum_{j=1}^m e_{\lambda_j} \right\|^2\geq \left\| \sum_{j=1}^m e_{\lambda_j} \right\|^2_{\phi_{u}} = \phi_{u} \left\{\sum_{j=1}^m e_{\lambda_j}, \sum_{j=1}^m e_{\lambda_j}, u \right\} $$ $$= \sum_{j=1}^m \phi_{u} \left\{ e_{\lambda_j}, e_{\lambda_j}, u \right\} > m \Theta,$$ which gives a contradiction. This proves that $\Lambda_u$ is countable, and hence $u\in M'_{\sigma}$.\smallskip

Let us now assume that $x$ is any element of $M_\sigma$. Then its range tripotent, $r(x)$, is $\sigma$-finite and hence  $r(x)\in M'_{\sigma}$ by the previous paragraph. Since $x\in M_2(r(x))$ is a positive and hence self-adjoint element, we have
$\J{r(x)}x{r(x)}=x$ and hence $x\in M'_\sigma$ as $M'_\sigma$ is an inner ideal.
This shows that $M_\sigma\subset M'_\sigma$.\smallskip

Conversely, let $x\in M'_{\sigma}$. In this case the set $\Lambda_{x}$ is countable. The tripotent $\displaystyle u=\hbox{w$^*$-}\sum_{\lambda\in \Lambda_x} e_{\lambda}$ is $\sigma$-finite in $M$, $e= u + v$, where $\displaystyle v= \hbox{w$^*$-}\sum_{\lambda\in \Lambda\backslash \Lambda_x} e_{\lambda}$ is another tripotent in $M$ with $u\perp v$. Since $\{e_{\lambda},e_{\lambda},x\}=0$ for all ${\lambda\in \Lambda\backslash \Lambda_x}$, it follows from the separate weak$^*$-continuity of the triple product of $M$ that $\{v,v,x\}=0$, that is, $x\in M_0 (v)$.
Hence also $r(x)\in M_0(v)$ (as $M_0(v)$ is a JBW$^*$-subtriple of $M$). It follows that $r(x)\perp v$ and hence $r(x)$ is $\sigma$-finite by Proposition \ref{p characterization of sigma finite tripotents}.\smallskip

We finally observe that, by Theorem \ref{t predual of JBW*-triples are 1-Plichko without submodels}, $M_{\sigma}$ is a $\Sigma$-subspace and hence it is norm-closed (cf. Lemma \ref{L:sigmasubspace}$(i)$). This completes the proof.
\end{proof}

We are now ready to prove the main theorem of this section.

\begin{proof}[Proof of Theorem~\ref{T:unique}]
$M_\sigma$ is a norm-closed inner ideal by Proposition~\ref{p inner ideal of sigma finite range elements}.
Let us prove the uniqueness.\smallskip

Let $I$ be {an  inner ideal} which is a $1$-norming  $\Sigma$-subspace. We will show that $I$ contains all sigma-finite
tripotents. Let $e\in M$ be a sigma-finite tripotent, $\phi\in M_*$ a normal functional of norm $1$ such that $e$ is the
support tripotent of $\phi$. By Lemma~\ref{L:sigmasubspace}(v) there is $x\in I$ of norm $1$ with $\phi(x)=1$.
Further, we get $r(x)\in I$. Indeed, $r(x)$ is contained in the weak$^*$-closure of the JB$^*$-subtriple of $M$ generated by $x$.
Since this subtriple is norm-separable, we get $r(x)\in I$ by Lemma~\ref{L:sigmasubspace}(i).\smallskip

In order to show $e\in I$ it is enough to show that $e\le r(x)$. By \eqref{eq order in support tripotents} it is enough to prove that $\phi(r(x))=1$.
Proposition 2.5 in \cite{Pe15} assures that $\phi(x^{[\frac{1}{2n+1}]}) =\phi(x)^{[\frac{1}{2n+1}]}=1,$ for all natural $n$. Since $\phi$ is a normal functional and $(x^{[\frac{1}{2n+1}]})\to r(x)$ in the weak$^*$ topology of $M$, it follows that $\phi (r(x)) =1$, as we desired.\smallskip

Now, if $z\in M_\sigma$ is arbitrary, then there is a $\sigma$-finite tripotent $f\in M$ with $z\in M_2(f)$.
By the above we have $f\in I$. Since $I$ is an inner ideal, we conclude that $M_2(f)\subset I$, and hence $z\in X$.\smallskip

Therefore, $M_\sigma\subset I$. Lemma~\ref{L:sigmasubspace}(iii) now shows that $M_\sigma=I$.
\end{proof}

\begin{remark}\label{remark proof with submodels}
It is possible to give a shorter proof of the fact that the predual of a JBW$^*$-triple is 1-Plichko by using the main result
of \cite{BoHamKal2016} at the cost of applying elementary submodels theory. However, this alternative argument does not yield $M_\sigma$ as a concrete description of a $\Sigma$-subspace.
We shall only sketch this variant:\smallskip

First, it is not too difficult to modify the decomposition \eqref{eq decomposition of JBW*-triples} by writing
\begin{equation}\label{eq decomposition of JBW*-triples 2} M = \left(\bigoplus_{j\in \mathcal{I}} A_j \overline{\otimes} G_j \right)_{\ell_{\infty}} \oplus_{\ell_{\infty}} N  \oplus_{\ell_{\infty}}  p V,
\end{equation}
where each $A_j$ is a commutative von Neumann algebra, each $G_j$ is a finite dimensional Cartan factor,
$p$ is a projection in a von Neumann algebra $V$, and $N$ is a JBW$^*$-algebra.\smallskip

Second, an almost word-by-word adaptation of the proof of \cite[Theorem 1.1]{BoHamKalIsrael} shows that the predual of $pV$ is 1-Plichko {\rm(}compare Proposition \ref{P:summandsbicontractive}{\rm)}.
So is the predual of $N$ by the main result of \cite{BoHamKal2016}. Finally, the summands $A_j \overline{\otimes} G_j$ are seen to have 1-Plichko predual
as in the proof of \ref{t Bochner 1-Plichko} {\rm(}or by an easier argument using the finite dimensionality of $C_j${\rm)}, and the stability of 1-Plichko spaces by $\ell_{1}$-sums {\rm(}\cite[Theorem 4.31$(iii)$]{Kal2000} or Lemma \ref{L:ellinftysum}{\rm)} allows us to conclude.
\end{remark}

\section{The case of real JBW$^*$-triples}\label{sec:6}

Introduced by J.M. Isidro, W. Kaup, and A. Rodr{\'i}guez (see \cite{IsKaRo95}), \emph{real JB$^*$-triples} are, by definition, the closed real subtriples of JB$^*$-triples. Every complex JB$^*$-triple is a real JB$^*$-triple when we consider the underlying real Banach structure. Real and complex C$^*$-algebras belong to the class of real JB$^*$-triples. An equivalent reformulation asserts that real JB$^*$-triples are in one-to-one correspondence with the real forms of JB$^*$-triples. More precisely, for each real JB$^*$-triple $E$ there exist a (complex) JB$^*$-triple $E_{c}$ and a period-2 conjugate-linear isometry (and hence a conjugate-linear triple isomorphism) $\tau: E_c\rightarrow E_c$ such that $E=\{b\in E_c\::\:\tau(b)=b\}$. The JB$^*$-triple $E_{c}$ identifies with the
complexification of $E$ (see \cite[Proposition 2.2]{IsKaRo95} or \cite[Proposition 4.2.54]{CabRodPal2014}). In particular, every JB-algebra (and hence the self-adjoint part, $A_{sa}$ of every C$^*$-algebra $A$) is a real JB$^*$-triple.\smallskip

Henceforth, for each complex Banach space $X$, the symbol $X_{\mathbb{R}}$ will denote the underlying real Banach space.\smallskip

In the conditions above we can consider another period-2 conjugate-linear isometry $\tau^{\sharp} : E_{c}^{*} \rightarrow
E_{c}^{*}$ defined by $$\tau^{\sharp}(\varphi) (z) := \overline{\varphi (\tau (z))}\ \ (\varphi\in E_{c}^{*}).$$ It is further known that
the operator $$(E_c^{*})^{\tau^{\sharp}}
\rightarrow (E_c^{\tau})^{*}, \ \ \varphi \mapsto \varphi|_{E}$$ is an isometric real-linear bijection, where $(E_c^{*})^{\tau^{\sharp}} := \{ \varphi \in E_{c}^{*} : \tau^{\sharp} (\varphi)=\varphi \}$.\smallskip

A real JBW$^*$-triple is a real JB$^*$-triple which is also a dual Banach space (\cite[Definition 4.1]{IsKaRo95} and \cite[Theorem 2.11]{MarPe}). It is known that every real JBW$^*$-triple admits a unique (isometric) predual and its triple product is separately weak$^*$-continuous (see \cite[Proposition 2.3 and Theorem 2.11]{MarPe}). Actually, by the just quoted results, given a real JBW$^*$-triple $N$ there exists a JBW$^*$-triple $M$ and a weak$^*$-to-weak$^*$ continuous period-2 conjugate-linear isometry $\tau: M\to M$ such that $N = M^{\tau}$. The mapping $\tau^{\sharp}$ maps $M_*$ into itself, and hence we can identify $(M_*)^{\tau^{\sharp}}$ with $N_{*} =(M^{\tau})_{*}$. We can also consider a weak$^*$-continuous real-linear bicontractive projection $P=\frac12 (Id+\tau)$ of $M$ onto $N= M^{\tau}$, and a bicontractive real-linear projection of $M_*$ onto $N_*$ defined by $Q=\frac12 (Id+\tau^{\sharp})$. From now on, $N$, $M$, $\tau$, $P,$ and $Q$ will have the meaning explained in this paragraph.\smallskip

Due to the general lack for real JBW$^*$-triples of the kind of structure results established by Horn and Neher for JBW$^*$-triples in \cite{Ho2,HoNe}, the proofs given in section \ref{sec:4} cannot be applied for real JBW$^*$-triples. Despite of the limitations appearing in the real setting, we shall see how the tools in previous section can be applied to prove that preduals of real JBW$^*$-triples are 1-Plichko spaces too.\smallskip

We shall need to extend the concept of $\sigma$-finite tripotents to the setting of real JBW$^*$-triples. The notions of tripotents, Peirce projections, Peirce decomposition are perfectly transferred to the real setting. The relations of orthogonality and order also make sense in the set of tripotents in $N$ (cf. \cite{IsKaRo95,MarPe}). Furthermore, for each tripotent $e$ in $N$, $Q(e)$ induces a decomposition of $N$ into $\RR$-linear subspaces satisfying $$N = N^1(e) \oplus
N^0(e) \oplus N^{-1}(e),$$ {where} $N^k (e) := \{
x\in N :\ Q(e)x = kx\},$  $$N_2(e) = N^1(e) \oplus N^{-1}(e)
 \ ,\quad
N^0(e) = N_1(e)\oplus N_0(e),$$ $$\{N^j(e),N^k(e),N^\ell(e)\}
\subset N^{jk\ell}(e) \hbox{ if } jk\ell\ne 0 ,\ j,k,\ell \in\{ 0,\pm 1\}, \hbox{ and zero otherwise}.$$ The natural projection of $N$ onto $N^{k}
(e)$ is denoted by $P^{k}(e)$. It is also known that $P^{1} (e)$, $P^{-1} (e)$, and $P^{0} (e)$ are all weak$^*$-continuous. The subspace $N^{1} (e)$ is a weak$^*$-closed Jordan subalgebra of the JBW-algebra $(M_2(e))_{sa}$, and hence $N^{1} (e)$ is a JBW-algebra.\smallskip

Given a normal functional $\phi\in N_*$, there exists a normal functional $\varphi\in M_*$ satisfying $\tau^{\sharp} (\varphi) =\varphi$ and $\varphi|_{N} =\phi$. Let $e(\varphi)$ be the support tripotent of $\varphi$ in $M$. Since $1=\varphi(e(\varphi)) = \overline{\varphi(\tau(e(\varphi)))} =\varphi(\tau(e(\varphi)))$, we deduce that $\tau(e(\varphi)) \geq e(\varphi)$. Applying that $\tau$ is a triple homomorphism, we get $e(\varphi) = \tau^2 (e(\varphi)) \geq \tau (e(\varphi)) \geq e(\varphi)$, which proves that $e(\varphi) =\tau(e(\varphi))\in N.$ That is, the support tripotent of a $\tau^{\sharp}$-symmetric normal functional $\varphi$ in $M_*$ is $\tau$-symmetric. The tripotent $e(\varphi)$ is called the support tripotent of $\phi$ in $N$, and it is denoted by $e(\phi)$. It is known that $\phi =\phi P^{1}(e(\phi))$ and $\phi|_{N^{1} (e(\phi))}$ is a faithful positive normal functional on the JBW-algebra $N^{1} (e(\phi))$ (compare \cite[Lemma 2.7]{PeSta}).\smallskip

As in the complex setting, a tripotent $e$ in $N$ is called \emph{$\sigma$-finite} if $e$ does not majorize an uncountable orthogonal subset of tripotents in $N$. The real JBW$^*$-triple $N$ is called \emph{$\sigma$-finite} if every tripotent in $N$ is {$\sigma$-finite}.

\begin{proposition}\label{p charact sigma-finite tripotents in real} In the setting fixed for this section, let $e$ be a tripotent in $N$. The following are equivalent:
\begin{enumerate}[$(a)$] \item $e$ is $\sigma$-finite in $N$;
\item $e$ is $\sigma$-finite in $M$;
\item $e$ is the support tripotent of a normal functional $\phi$ in $N_*$;
\item $e$ is the support tripotent of a $\tau^{\sharp}$-symmetric normal functional $\varphi$ in $M_*$.
\end{enumerate}
Consequently, for $$N_{\sigma} := \{x\in N : \hbox{there exists a $\sigma$-finite tripotent $e$ in } N \hbox{ with } \{e,e,x\} =x\}$$ we have $$N_{\sigma}=\{x\in M_{\sigma} : \tau (x) =x\} =N\cap M_{\sigma},$$ and the following are equivalent: \begin{enumerate}[$(i)$] \item $M$ is $\sigma$-finite {\rm(}i.e. $M_{\sigma}=M${\rm)}; \item $N$ is $\sigma$-finite {\rm(}i.e. $N_{\sigma}=N${\rm)}; \item $N$ contains a complete $\sigma$-finite tripotent.
\end{enumerate}
\end{proposition}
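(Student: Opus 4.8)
The plan is to establish the four-way equivalence $(a)\Leftrightarrow(b)\Leftrightarrow(c)\Leftrightarrow(d)$ and then read off the two ``consequently'' statements. Three of the implications are essentially for free. The equivalence $(c)\Leftrightarrow(d)$ is built into the definition of the support tripotent in $N$ given just before the proposition: a normal $\phi\in N_*$ is precisely the restriction of a $\tau^{\sharp}$-symmetric $\varphi\in M_*$, and $e(\phi)$ was \emph{defined} to be $e(\varphi)$. The implication $(d)\Rightarrow(b)$ is immediate from \cite[Theorem 3.2]{EdRu98}, since $e=e(\varphi)$ for some $\varphi\in M_*$ makes $e$ $\sigma$-finite in $M$. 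And $(b)\Rightarrow(a)$ is a direct comparison: as $N$ is a subtriple of $M$, orthogonality and the order relation among tripotents of $N$ coincide with those computed in $M$, so an uncountable orthogonal family of tripotents of $N$ dominated by $e$ would be one in $M$.

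The heart of the matter is $(a)\Rightarrow(b)$, for which I would pass to the Jordan algebras attached to $e$. By \cite[Cor.\ 1.7]{FriRu85} a tripotent $u\le e$ is exactly a projection of the JB$^*$-algebra $M_2(e)$; if in addition $u\in N$, then $u$ is a projection of the JBW-algebra $N^{1}(e)$, and conversely every projection of $N^{1}(e)$ is such a tripotent. Hence $(a)$ is equivalent to $N^{1}(e)$ being $\sigma$-finite as a JBW-algebra, while by Lemma~\ref{L:sigmafinitetrip}$(ii)$ statement $(b)$ is equivalent to $M_2(e)$, equivalently $(M_2(e))_{sa}$, being $\sigma$-finite. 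Using that a JBW-algebra is $\sigma$-finite iff it carries a faithful normal state \cite{HanStor}, it remains to transfer such a state between $(M_2(e))_{sa}$ and its fixed-point subalgebra $N^{1}(e)$. The key device is that $\tau$ restricts to a period-$2$ Jordan automorphism of $B:=(M_2(e))_{sa}$ (it fixes $e$ and commutes with the Peirce-$2$ involution), so $E:=\tfrac12(\mathrm{Id}+\tau)$ is a normal positive unital projection of $B$ onto $N^{1}(e)$. If $\phi_0$ is a faithful normal state on $N^{1}(e)$, then $\phi_0\circ E$ is one on $B$: for $b\ge0$ with $\phi_0(E(b))=0$ we get $E(b)=\tfrac12(b+\tau(b))=0$, whence $b=-\tau(b)\le0$ and so $b=0$. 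This proves $(a)\Rightarrow(b)$. To connect $\{a,b\}$ with $\{c,d\}$ I would also record $(b)\Rightarrow(d)$: a faithful normal state $\omega$ of $M_2(e)$, extended to $M$ by $P_2(e)$ and averaged with its $\tau^{\sharp}$-image, yields a $\tau^{\sharp}$-symmetric normal functional of support $e$, since $\tau$ commutes with $P_2(e)$.

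Granting the equivalence, $N_{\sigma}=N\cap M_{\sigma}$ follows at once. If $x\in N_{\sigma}$, pick a $\sigma$-finite $e\in N$ with $x\in N_2(e)\subset M_2(e)$; then $e$ is $\sigma$-finite in $M$ by $(a)\Rightarrow(b)$, so $x\in M_{\sigma}$. Conversely, for $x\in N\cap M_{\sigma}$ the range tripotent $r(x)$ lies in $N$ and is $\sigma$-finite in $M$, hence in $N$ by $(b)\Rightarrow(a)$, and $\{r(x),r(x),x\}=x$ gives $x\in N_{\sigma}$; since $N=\{x\in M:\tau(x)=x\}$ this is the stated description. For the trichotomy, $(i)\Rightarrow(ii)$ applies $(b)\Rightarrow(a)$ to every tripotent of $N$; $(ii)\Rightarrow(iii)$ uses that the weak$^*$-compact unit ball of the dual space $N$ has an extreme point, which is a complete tripotent \cite{IsKaRo95} and is $\sigma$-finite once $N$ is; and for $(iii)\Rightarrow(i)$ a complete tripotent $w$ of $N$ is complete in $M$ (since $N_0(w)=M_0(w)^{\tau}$ and a conjugate-linear involution with trivial fixed space forces $M_0(w)=0$) and is $\sigma$-finite in $M$ by $(a)\Rightarrow(b)$, after which a support functional $\varphi$ of $w$ makes $\|\cdot\|_{\varphi}$ a genuine norm on $M=M_2(w)\oplus M_1(w)$ (Lemma~\ref{l hilbertian norm}) and the estimate of Proposition~\ref{p characterization of sigma finite tripotents} forces every orthogonal family of tripotents of $M$ to be countable.

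The only real obstacle is $(a)\Rightarrow(b)$: because $P=\tfrac12(\mathrm{Id}+\tau)$ is merely real-linear, Proposition~\ref{p sigma-finiteness of bicontractively complemented subtriples} and the Horn--Neher machinery of Section~\ref{sec:4} are unavailable for real forms, and the averaging of faithful normal states through the fixed-point expectation $E$ is what replaces them.
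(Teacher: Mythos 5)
Your argument is correct, and it shares its first half with the paper's proof: both reduce $(a)$ to the statement that $e$ is a $\sigma$-finite projection in the JBW-algebra $N^{1}(e)$ and then invoke the existence of a faithful normal state $\phi_0$ on that algebra (the paper cites \cite[Theorem 4.6]{EdRu85} where you cite \cite{HanStor}). The two proofs diverge in how the state is exploited. The paper proves $(a)\Rightarrow(d)$ directly: it extends $\phi_0$ to $\phi=\phi_0 P^{1}(e)\in N_*$, lifts to a $\tau^{\sharp}$-symmetric $\varphi\in M_*$, notes $\varphi(e)=1$ forces $e\ge e(\varphi)$ with $e(\varphi)\in N$ a projection of $N^{1}(e)$, and concludes $e=e(\varphi)$ from $\phi_0(e(\varphi))=1$ and faithfulness; this yields the support-tripotent statement $(d)$ in one stroke and closes the cycle immediately. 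You instead prove $(a)\Rightarrow(b)$ by pushing $\phi_0$ forward through the positive unital normal expectation $\tfrac12(\mathrm{Id}+\tau)$ of $(M_2(e))_{sa}$ onto $N^{1}(e)$ to get a faithful normal state on the whole Peirce-$2$ algebra, and then use Lemma~\ref{L:sigmafinitetrip}; this is a clean ``$\sigma$-finiteness passes up through a faithful conditional expectation'' argument, but it obliges you to supply the extra leg $(b)\Rightarrow(d)$ (averaging a support functional with its $\tau^{\sharp}$-image) to link $\{a,b\}$ with $\{c,d\}$, a step the paper does not need. For the final trichotomy the paper simply remarks that a complete tripotent of $N$ is complete in $M$ and quotes \cite[Theorem 4.4]{EdRu98}; you make both points explicit -- the decomposition of $M_0(w)$ under the conjugate-linear involution, and a direct counting argument via $\|\cdot\|_{\varphi}$ as in Proposition~\ref{p characterization of sigma finite tripotents} -- which is a harmless (and arguably welcome) amplification. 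In short: same skeleton, a genuinely different transfer mechanism at the crux, both valid.
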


\begin{proof} The implication $(b)\Rightarrow (a)$ and the equivalence $(c)\Leftrightarrow (d)$ are clear. The implication $(d)\Rightarrow (b)$ follows from \cite[Theorem 3.2]{EdRu98}. To see $(a)\Rightarrow (d)$, let us assume that $e$ is $\sigma$-finite in $N$. Clearly $e$ is the unit in the JBW-algebra $N^{1} (e)$, and since every family of mutually orthogonal projections in this algebra is a family of mutually orthogonal tripotents in $N$ majorized by $e$, we deduce that $e$ is a $\sigma$-finite projection in $N^{1}(e).$ Theorem 4.6 in \cite{EdRu85} assures the existence of a faithful normal state $\phi$ in $(N^{1}(e))_*$. By a slight abuse of notation, the symbol $\phi$ will also denote the functional $\phi P^{1} (e)$. Clearly $\phi\in N_*$ and $\phi|_{N^{1} (e)}$ is a faithful normal state.\smallskip

By the arguments above, there exists a $\tau^{\sharp}$-symmetric normal functional $\varphi$ in $M_*$ such that $\varphi|_{N} =\phi$. Let $e(\varphi)$ be the support tripotent of $\varphi$ in $M$. We have also commented before this proposition that $\tau(e(\varphi))=e(\varphi)$ (i.e. $e(\varphi)\in N$) because $\phi$ is $\tau^{\sharp}$-symmetric. Since $\varphi (e) =\phi (e) =1$, we deduce that $e\geq e(\varphi)$. Therefore $e(\varphi)$ is a projection in the JBW-algebra $N^{1} (e)$. Furthermore, $\phi (e(\varphi)) = \varphi (e(\varphi))=1$ and the faithfulness of $\phi|_{N^{1} (e)}$ show that $e=e(\varphi)$. This proves the equivalence of $(a)$, $(b)$, $(c)$ and $(d)$. The equality $N_{\sigma} = N\cap M_{\sigma}$ is clear from the first statement.\smallskip

Since a complete tripotent in $N$ is a complete tripotent in $M$, the rest of the statement follows from the previous equivalences and \cite[Theorem 4.4]{EdRu98}.
\end{proof}

We can prove now our main result for preduals of real JBW$^*$-triples.

\begin{theorem}\label{t predual real JBW*-triples} The predual of any real JBW$^*$-triple $N$ is a $1$-Plichko space. 
Moreover, $N_*$ is is weakly Lindel\"{o}f determined if and only if $N$ is $\sigma$-finite. In the latter case $N_*$ is even weakly compactly generated.
\end{theorem}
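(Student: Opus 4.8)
The plan is to transfer the complex result of Theorem~\ref{t predual of JBW*-triples are 1-Plichko without submodels} to $N$ by viewing $N_*$ as the range of the real-linear bicontractive projection $Q=\frac12(Id+\tau^{\sharp})$ on the underlying real space $(M_*)_{\RR}$, and dually viewing $N$ as a weak$^*$-closed real subspace of $(M)_{\RR}=((M_*)_{\RR})^*$. The engine will be Lemma~\ref{L:quotient}, applied with $X=(M_*)_{\RR}$ and $Z=N$. First I would record the real--complex bookkeeping: the real dual $((M_*)_{\RR})^*$ is isometric to $(M)_{\RR}$ via $\phi\mapsto\Re\phi$, and under this identification $N=\{x\in M:\tau x=x\}$ is weak$^*$-closed because $\tau$ is weak$^*$-continuous. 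I would then check that $N_\perp=\ker Q$: if $\tau^{\sharp}f=-f$ then $\Re f(x)=0$ for every $x\in N$, and conversely if $\Re f|_N=0$ then $Qf\in N_*=(M_*)^{\tau^{\sharp}}$ restricts to $0$ on $N$, whence $Qf=0$ by injectivity of the restriction identification $(M_*)^{\tau^{\sharp}}\to N_*$. Consequently $(M_*)_{\RR}/N_\perp=(M_*)_{\RR}/\ker Q$ is isometric to $N_*$, so that $N=(N_*)^*=(X/Z_\perp)^*$ in the language of Lemma~\ref{L:quotient}.

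Next I would promote the complex conclusion to the realification. If $M_\sigma$ is the $\Sigma$-subspace of $M=(M_*)^*$ induced by a complex-linearly dense set $\mathcal S\subset M_*$, then $\mathcal S\cup i\mathcal S$ is real-linearly dense in $(M_*)_{\RR}$, and a short computation using $\Re\phi(is)=-\Im\phi(s)$ shows that the real $\Sigma$-subspace of $(M)_{\RR}$ induced by $\mathcal S\cup i\mathcal S$ coincides, as a set, with $M_\sigma$. Moreover, $1$-norming in the complex sense yields $1$-norming in the real sense after rotating a norming functional by a suitable unimodular scalar. Thus $(M_*)_{\RR}$ is $1$-Plichko with $1$-norming $\Sigma$-subspace $M_\sigma\subset(M)_{\RR}$.

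The remaining hypothesis of Lemma~\ref{L:quotient}, namely that $M_\sigma\cap\ball_N$ be weak$^*$-dense in $\ball_N$, is the step I expect to be the crux. Since $M_\sigma$ is $1$-norming, its unit ball is weak$^*$-dense in $\ball_M$ (bipolar theorem), so each $x\in\ball_N$ is a weak$^*$-limit of a net $(\phi_\alpha)\subset\ball_{M_\sigma}$. The projection $P=\frac12(Id+\tau)$ is weak$^*$-continuous, of norm one, fixes $N$ pointwise, and leaves $M_\sigma$ invariant: indeed $\tau$ is a conjugate-linear triple automorphism, hence permutes the $\sigma$-finite tripotents and satisfies $r(\tau\phi)=\tau(r(\phi))$, while $M_\sigma$ is a subspace. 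Therefore $P\phi_\alpha\in M_\sigma\cap\ball_N$ and $P\phi_\alpha\to Px=x$ weak$^*$, which gives the required density. Lemma~\ref{L:quotient} now yields that $M_\sigma\cap N$ is a $1$-norming $\Sigma$-subspace of $N=(N_*)^*$, and by Proposition~\ref{p charact sigma-finite tripotents in real} this space is exactly $N_\sigma$; in particular $N_*$ is $1$-Plichko.

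For the \emph{moreover} part I would argue as in the complex case. Since $N_\sigma$ is a $1$-norming $\Sigma$-subspace, Lemma~\ref{L:sigmasubspace}$(iv)$ shows that $N_*$ is weakly Lindel\"of determined if and only if $N_\sigma=N$, that is, if and only if $N$ is $\sigma$-finite by Proposition~\ref{p charact sigma-finite tripotents in real}. If $N$ is $\sigma$-finite, then so is $M$ by the same proposition, hence $M_*$ is weakly compactly generated by Theorem~\ref{T:sigmafinite}; as weak compactness and linear density pass between a complex space and its realification, $(M_*)_{\RR}$ is weakly compactly generated, and $N_*=Q((M_*)_{\RR})$ is weakly compactly generated as the bounded linear image of a weakly compactly generated space. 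The main obstacle throughout is the weak$^*$-density verification together with keeping the real/complex duality identifications consistent; once these are in place the result follows mechanically from the complex theory.
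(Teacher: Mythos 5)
Your proposal is correct and follows essentially the same route as the paper: identify $M_{\mathbb{R}}$ with $((M_*)_{\mathbb{R}})^*$, observe that $M_\sigma$ remains a $1$-norming $\Sigma$-subspace of the realification (the paper cites \cite[Proposition 3.4]{Kal2005} where you verify this by hand), prove weak$^*$-density of $\ball_N\cap M_\sigma$ in $\ball_N$ by symmetrizing a net from $\ball_{M_\sigma}$ with $\frac12(Id+\tau)$, and conclude via Lemma~\ref{L:quotient}, Proposition~\ref{p charact sigma-finite tripotents in real} and the complex WCG result. The extra bookkeeping you supply ($N_\perp=\ker Q$, $\tau$-invariance of $M_\sigma$ via range tripotents) is sound and only makes explicit what the paper leaves implicit.
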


\begin{proof} We keep the notation fixed for this section with $N$, $M$ and $\tau$ as above. There exists a canonical isometric identification of $M_{\mathbb{R}}$ with $((M_*)_{\mathbb{R}})^*$, where any $x\in M_{\mathbb{R}}$ acts on $(M_*)_{\mathbb{R}}$ by the assignment $\omega\mapsto\operatorname{Re}\omega(x)$ ($\omega\in (M_*)_{\mathbb{R}}$). Thus $(M_*)_{\mathbb{R}}$ is a real $1$-Plichko space and $M_{\sigma}$ is again a $1$-norming $\sigma$-subspace by Theorem \ref{t predual of JBW*-triples are 1-Plichko without submodels} and \cite[Proposition 3.4]{Kal2005}.\smallskip

In view of Lemma~\ref{L:quotient} to prove that the predual of $N$ is $1$-Plichko, it is enough to show that ${\mathcal B}_{N}\cap M_\sigma$ is weak$^*$-dense in ${\mathcal B}_N$. Since $M_{\sigma}$ is a 1-norming subspace we can easily see that $\ball_{M_{\sigma}}$ is weak$^*$-dense in $\ball_{M}$. Take an element $a\in {\mathcal B}_N\subset {\mathcal B}_M$. Then there exists a net $(a_{\lambda}) \subset {\ball_{M_{\sigma}}}$ converging to $a$ in the weak$^*$-topology of $M$. Since $\tau$ is weak$^*$-continuous and $M_{\sigma}$ is a norm-closed $\tau$-invariant subspace of $M$, we can easily see that $(\frac{a_{\lambda}+\tau(a_{\lambda})}{2})\to a$ in the weak$^*$-topology of $M$, where $(\frac{a_{\lambda}+\tau(a_{\lambda})}{2})\subset {\ball_{N_{\sigma}}} ={\mathcal B}_{N}\cap M_\sigma$, which proves the desired weak$^*$-density.\smallskip

For the last statement, we observe that $N$ is $\sigma$-finite if and only if $M$ is (see Proposition \ref{p charact sigma-finite tripotents in real}), and hence the desired equivalence follows from Theorem \ref{t predual of JBW*-triples are 1-Plichko without submodels} and the results presented in sections \ref{sec:4} and \ref{sec:6}. We also note that $N$ $\sigma$-finite implies $M$ $\sigma$-finite implies $M_*$ WCG implies
$N_*$ WCG, being a complemented subspace.
\end{proof}

We can rediscover the following two results in \cite{BoHamKalIsrael} and \cite{BoHamKal2016} as corollaries of our last theorem.

\begin{corollary}\label{c Msa}\cite[Theorem 1.4]{BoHamKalIsrael} Let $W$ be a von Neumann algebra. Then the predual, $(W_{sa})_*$, of the self-adjoint part, $W_{sa},$ of $W$ is a 1-Plichko space. Moreover, $(W_{sa})_*$ is weakly
Lindel\"{o}f determined if and only if $W$ is $\sigma$-finite. In the latter case $W_*$ and $(W_{sa})_*$ are even weakly compactly generated. $\hfill\Box$
\end{corollary}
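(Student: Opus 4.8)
The plan is to recognise $W_{sa}$ as a real JBW$^*$-triple and then read off the conclusion from Theorem~\ref{t predual real JBW*-triples}, using Proposition~\ref{p charact sigma-finite tripotents in real} to translate $\sigma$-finiteness back to $W$. Concretely, I would set $M=W$, regarded as a JBW$^*$-triple under the product $\{x,y,z\}=\frac12(xy^*z+zy^*x)$, and take $\tau:M\to M$ to be the involution $\tau(x)=x^*$. The point to verify is that $\tau$ is a period-$2$, conjugate-linear, weak$^*$-to-weak$^*$-continuous isometric triple automorphism, so that the pair $(M,\tau)$ is exactly the data attached to a real JBW$^*$-triple in Section~\ref{sec:6}. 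Conjugate-linearity and $\tau^2=\mathrm{id}$ are immediate, the identity $\|x^*\|=\|x\|$ holds in any C$^*$-algebra, and the involution of a von Neumann algebra is weak$^*$-continuous; the triple-automorphism property is a one-line computation from $(ab)^*=b^*a^*$, giving $\tau\{x,y,z\}=\frac12(z^*yx^*+x^*yz^*)=\{\tau x,\tau y,\tau z\}$. With this in hand, $W_{sa}=\{x\in W:x^*=x\}=M^{\tau}$, so $W_{sa}$ is the real JBW$^*$-triple with complexification $M=W$ and conjugation $\tau$.

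Once this identification is in place, Theorem~\ref{t predual real JBW*-triples} applied to $N=W_{sa}$ yields immediately that $(W_{sa})_*$ is $1$-Plichko and that it is weakly Lindel\"{o}f determined if and only if $N=W_{sa}$ is $\sigma$-finite. To rephrase the last condition in terms of $W$, I would invoke the equivalence $(i)\Leftrightarrow(ii)$ of Proposition~\ref{p charact sigma-finite tripotents in real}, which states that $N=M^{\tau}$ is $\sigma$-finite precisely when $M=W$ is $\sigma$-finite. This gives the asserted dichotomy: $(W_{sa})_*$ is WLD if and only if $W$ is $\sigma$-finite.

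For the final weak-compact-generation claim, suppose $W$ is $\sigma$-finite. Then $N=W_{sa}$ is $\sigma$-finite by the equivalence just used, so $(W_{sa})_*=N_*$ is weakly compactly generated by the last assertion of Theorem~\ref{t predual real JBW*-triples}; and $W_*$ is weakly compactly generated (in fact Hilbert-generated) by Theorem~\ref{T:sigmafinite}, since a $\sigma$-finite von Neumann algebra is in particular a $\sigma$-finite JBW$^*$-triple (an orthogonal family of partial isometries produces an orthogonal family of range projections, necessarily countable when the unit is $\sigma$-finite). There is no genuinely hard step here; the whole statement is a direct specialisation of the results of this section. The only point that requires care, and hence the one I would present in detail, is the verification that the involution is a triple automorphism of $M=W$, so that $W_{sa}$ legitimately enters the real JBW$^*$-triple machinery.
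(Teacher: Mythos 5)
Your argument is correct and is exactly the route the paper intends: the corollary is stated as an immediate consequence of Theorem~\ref{t predual real JBW*-triples} applied to the real JBW$^*$-triple $N=W_{sa}=W^{\tau}$ with $\tau(x)=x^*$, with the $\sigma$-finiteness transfer handled by Proposition~\ref{p charact sigma-finite tripotents in real} and the WCG claim for $W_*$ by Theorem~\ref{T:sigmafinite}. Your explicit verification that the adjoint is a weak$^*$-continuous conjugate-linear triple automorphism is the only detail the paper leaves implicit (it simply records in Section~\ref{sec:6} that $A_{sa}$ is a real JB$^*$-triple), and it checks out.
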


\begin{corollary}\label{c JBW preduals}\cite[Theorem 1.1]{BoHamKal2016}  The predual of any JBW-algebra $J$ is $1$-Plichko. Moreover, $J_*$ is is weakly Lindel\"{o}f determined if and only if $J$ is $\sigma$-finite. In the latter case $J_*$ is even weakly compactly generated.$\hfill\Box$
\end{corollary}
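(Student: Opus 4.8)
The plan is to exhibit a JBW-algebra as a special case of a real JBW$^*$-triple and then to quote Theorem~\ref{t predual real JBW*-triples}. Recall from the opening of this section that every JB-algebra is a real JB$^*$-triple. Since a JBW-algebra $J$ is, by definition, a JB-algebra which is simultaneously a dual Banach space, it is a real JB$^*$-triple that is a dual Banach space, i.e.\ a real JBW$^*$-triple. Hence the whole machinery of Theorem~\ref{t predual real JBW*-triples} is available for $J$, and it already yields that $J_*$ is $1$-Plichko, that $J_*$ is weakly Lindel\"of determined exactly when $J$ is $\sigma$-finite, and that in that case $J_*$ is even weakly compactly generated.

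The one point that genuinely needs checking is that the order-theoretic notion of $\sigma$-finiteness used for JBW-algebras (no uncountable orthogonal family of nonzero projections, equivalently the existence of a faithful normal state, cf.\ \cite[Theorem 4.6]{EdRu85}) coincides with the triple-theoretic notion appearing in Theorem~\ref{t predual real JBW*-triples}. To reconcile them I would use that the unit $1$ of $J$ is a complete (indeed unitary) tripotent, so that $Q(1)$ is the identity, $J=N^1(1)$ is its own associated JBW-algebra, and $1$ is complete since $J_0(1)=\{0\}$. By the final equivalences of Proposition~\ref{p charact sigma-finite tripotents in real}, $J$ is $\sigma$-finite as a real JBW$^*$-triple precisely when it contains a complete $\sigma$-finite tripotent; taking this tripotent to be $1$, and combining with the equivalence between $\sigma$-finiteness of a tripotent and its being the support tripotent of a normal functional, this happens exactly when $1$ is the support tripotent of some $\phi\in J_*$, i.e.\ when $\phi|_{N^1(1)}=\phi|_J$ is a faithful normal state. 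This is precisely $\sigma$-finiteness in the JBW-algebra sense.

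With this identification in hand, the three assertions of the corollary transcribe verbatim from Theorem~\ref{t predual real JBW*-triples}; the companion statement Corollary~\ref{c Msa} is then obtained by applying the same theorem to the self-adjoint part $W_{sa}$ of a von Neumann algebra, which is itself a JBW-algebra. The main (and essentially only) obstacle is the bookkeeping of the two $\sigma$-finiteness conventions described above; once that is settled no further computation is needed, which is exactly why the statement can be recorded as an immediate corollary.
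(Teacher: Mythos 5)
Your proposal is correct and follows exactly the route the paper intends: the corollary is recorded with no written proof precisely because a JBW-algebra is a real JBW$^*$-triple, so Theorem~\ref{t predual real JBW*-triples} applies verbatim. Your extra care in matching the two notions of $\sigma$-finiteness (via the unit being a complete unitary tripotent, $J=N^1(1)$, and the faithful-normal-state characterization through Proposition~\ref{p charact sigma-finite tripotents in real}) is exactly the bookkeeping the paper leaves implicit, and it is carried out correctly.
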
\smallskip

\noindent{\sc Acknowledgement} The last mentioned author thanks A. Defant for helpful remarks.


\begin{thebibliography}{0}

\bibitem{AmirLind68} D. Amir, J. Lindenstrauss, The structure of weakly compact sets in Banach spaces, \emph{Ann. of Math.} (2) \textbf{88}, 35-46 (1968).

\bibitem{BarFri1987}
T.~{Barton}, Y.~{Friedman},
\newblock{Grothendieck's inequality for JB$^*$-triples and applications}
\newblock{\em {J. Lond. Math. Soc., II. Ser.}}, \textbf{36} (3), 513-523 (1987).

\bibitem{BarTi}
T.~{Barton}, R.M. {Timoney}.
\newblock {Weak$\sp *$-continuity of Jordan triple products and its
  applications,}
\newblock {\em {Math. Scand.}} \textbf{59} 177-191 (1986).


\bibitem{BoHamKalIsrael} M. Bohata, J. Hamhalter, O. Kalenda, On Markushevich bases in preduals
of von Neumann algebras. Israel J. Math. (to appear), arXiv:1504.06981.

\bibitem{BoHamKal2016} M. Bohata, J. Hamhalter, O. Kalenda, Decompositions of preduals of JBW and JBW$^*$-algebras, J. Math. Anal. Appl. (to appear, doi:10.1016/j.jmaa.2016.08.031), arXiv:1511.01086.

\bibitem{BraKaUp78} R.B. Braun, W. Kaup, H. Upmeier, A holomorphic characterization of Jordan-C$^*$-algebras, \emph{Math. Z.} \textbf{161}, 277-290 (1978).

\bibitem{Bunce01}
L.~J. {Bunce},
\newblock {Norm preserving extensions in JBW*-triple preduals,}
\newblock {\em {Quart. J. Math.}} \textbf{52}(2), 133-136 (2001).

\bibitem{BuChuZa}
L.~J. {Bunce}, C.~H. {Chu}, B.~{Zalar},
\newblock {Structure spaces and decomposition in JB$^*$-triples,}
\newblock {\em {Math. Scand.}} \textbf{86} (1), 17-35 (2000).

\bibitem{BurFerGarMarPe} M. Burgos, F. J. Fern{\' a}ndez-Polo, J. J. Garc{\'e}s, J. Mart{\'i}nez Moreno, A. M. Peralta, Orthogonality preservers in C$^*$-algebras, JB$^*$-algebras and JB$^*$-triples,
 \emph{J. Math. Anal. Appl.} \textbf{348}, 220-233 (2008).

\bibitem{CabRodPal2014} M. Cabrera, A. Rodriguez Palacios,
\newblock {\emph{Non-Associative Normed Algebras: Volume 1, The Vidav-Palmer and
  Gelfand-Naimark Theorems,}}
\newblock Cambridge University Press (2014).

\bibitem{Chu2012} Ch.-H. Chu, \emph{Jordan Structures in Geometry and Analysis}, Cambridge Tracts in Math. 190, Cambridge. Univ. Press, Cambridge, 2012.

\bibitem{DefFlo} A. Defant, K. Floret, \emph{Tensor norms and operator ideals}, North-Holland Mathematics Studies, 176, North-Holland Publishing Co., Amsterdam, 1993.

\bibitem{DieUhl} J. Diestel, J.J. Uhl, \emph{Vector measures}, Amer. Math. Soc., Providence, R.I., 1977.

\bibitem{EdRu85} C.M. {Edwards}, G.T. {R\"uttimann}, On the facial struture of the unit balls in a GL-space and its dual, \emph{Math. Proc. Camb. Philos. Soc.} \textbf{98}, 305-322 (1985).

\bibitem{EdRu88}
C.M. {Edwards}, G.T. {R\"uttimann},
\newblock {On the facial structure of the unit balls in a $JBW\sp*$-triple and
  its predual,}
\newblock {\em {J. Lond. Math. Soc., II. Ser.}} \textbf{38} (2), 317-332 (1988).

\bibitem{EdRu88bis}
C.M. {Edwards}, G.T. {R\"uttimann},
\newblock {Inner ideals in $W\sp*$-algebras,}
\newblock {\em {Mich. Math. J.}} \textbf{36} (1), 147-159 (1989).

\bibitem{EdRu92}
C.M. {Edwards}, G.T. {R\"uttimann},
\newblock {A characterization of inner ideals in $JB\sp \ast$-triples,}
\newblock {\em {Proc. Am. Math. Soc.}} \textbf{116} (4), 1049-1057 (1992).

\bibitem{EdRu96} C.M. {Edwards}, G.T. {R\"uttimann}, Structural
projections on JBW$^*$-triples, \textit{J. London Math. Soc.}
\textbf{53}, 354-368 (1996).

\bibitem{EdRu98} C.M. Edwards, G.T. R{\"u}ttimann, Exposed faces of the unit ball in a ${\rm JBW}^*$-triple,
\emph{Math. Scand.}  \textbf{82},  no. 2, 287-304  (1998).

\bibitem{EdRu01}
C.M. {Edwards}, G.T. {R\"uttimann},
\newblock {Orthogonal faces of the unit ball in a Banach space,}
\newblock {\em {Atti Semin. Mat. Fis. Univ. Modena}} \textbf{49} (2), 473-493 (2001).

\bibitem{EfRu} E.G. Effros, Z.-J. Ruan, \emph{Operator Spaces}, London Mathematical Society Monographs. New Series, 23. The Clarendon Press, Oxford University Press, New York, 2000.

\bibitem{EfRu1990} E. G. Effros, Z.-J. Ruan, On approximation properties for operator spaces.
\emph{Internat. J. Math.} \textbf{1}, no. 2, 163--187  (1990).

\bibitem{EffStor} E.G. Effros, E. St{\o}rmer, Positive projections and Jordan structure in operator algebras, \emph{Math. Scand.} \textbf{45}, 127-138 (1979).

\bibitem{FHHMZ}
M. {Fabian}, P. {Habala}, P. {H\'ajek}, V. {Montesinos}, V. {Zizler},
\newblock {\em {Banach space theory. The basis for linear and nonlinear
  analysis,}}
\newblock Berlin: Springer, 2011.

\bibitem{FriRu82} Y. Friedman, B. Russo, Contractive projections on $C_0(K)$, \emph{Trans. Amer. Math. Soc.} \textbf{273}, no. 1, 57-73 (1982).

\bibitem{FriRu85} Y. Friedman, B. Russo, Structure of the predual of a
JBW$^*$-triple, \emph{J. Reine Angew. Math.} \textbf{356}, 67-89 (1985).

\bibitem{FriRu2} Y. Friedman, B. Russo, A Gelfand-Naimark theorem for JB$^*$-triples, \emph{Duke Math. J.} {\bf 53}, 139-148 (1986).

\bibitem{FriRu4} Y. Friedman, B. Russo, Solution of the contractive
projection problem, \textit{J. Funct. Analysis} \textbf{60}, 56-79
(1986).

\bibitem{FriRu87} Y. {Friedman}, B. {Russo},
\newblock {Conditional expectation and bicontractive projections on Jordan
  $C\sp *$- algebras and their generalizations.}
\newblock {\em {Math. Z.}} \textbf{194}, 227-236 (1987).

\bibitem{GhoGoMauScha87} N. Ghoussoub, G. Godefroy, B. Maurey, W. Schachermayer, Some topological and geometrical structures in Banach spaces, Mem. Amer. Math. Soc. 70, 378 (1987).

\bibitem{Hanche-Olsen1983} H. Hanche-Olsen,  On the structure and tensor products of JC-algebras,  \emph{Canad. J. Math.} \textbf{35}, no. 6, 1059-1074 (1983).

\bibitem{HanStor} H. Hanche-Olsen, E. St{\o }rmer, \emph{Jordan operator algebras}, Monographs and Studies in
Mathematics 21, Pitman, London-Boston-Melbourne, 1984.

\bibitem{HoMarPeRu} T. Ho, J. Martinez-Moreno, A.M. Peralta, B. Russo,
 Derivations on real and complex JB$^\ast$-triples, \emph{J. London Math. Soc.} (2)
 \textbf{65}, no. 1, 85-102 (2002).

\bibitem{Horn87} G. Horn, Characterization of the predual and ideal structure of a JBW$^*$-triple, \emph{Math. Scand.} \textbf{61}, no. 1, 117-133 (1987).

\bibitem{Ho2} G. Horn, Classification of JBW*-Triples of type I, {\it
Math.\  Z.} \textbf{196}, 271-291 (1987).

\bibitem{HoNe} G. Horn, E. Neher, Classification of continuous
JBW*-Triples, {\it Trans.\  Amer.\  Math.\  Soc.} \textbf{306}, 553-578
(1988).

\bibitem{IsKaRo95} J.M. Isidro, W. Kaup, A. Rodr{\'\i}guez, On real forms of JB$^*$-triples, \emph{Manuscripta Math.} \textbf{86}, 311-335 (1995).

\bibitem{KadRing} R.V. Kadison, J.R. Ringrose, \emph{Fundamentals of the theory of operator algebras. Vol. I},
vol. 15 of Graduate Studies in Mathematics, American Mathematical Society, Providence, RI, 1997, reprint of the 1983 original.

\bibitem{Kal2000} O.F.K. Kalenda, Valdivia compact spaces in topology and Banach space theory, \emph{Extracta
Math.} \textbf{15}, 1, 1-85 (2000).

\bibitem{Kal2002}
O.F.K. Kalenda, Note on {M}arkushevich bases in subspaces and quotients of {B}anach spaces, {\em Bull. Polish Acad. Sci. Math.} \textbf{50}, 2, 117--126 (2002).

\bibitem{Kal2005} O.F.K. Kalenda, Complex Banach spaces with Valdivia dual unit ball, \emph{Extracta Math.} \textbf{20},
3, 243-259  (2005).

\bibitem{Kal2008} O.F. Kalenda, Natural examples of Valdivia compact space, \emph{J. Math. Anal. Appl.} \textbf{340},
1, 81-101 (2008).

\bibitem{KaUp77} W. Kaup, H. Upmeier, Jordan algebras and
symmetric Siegel domains in Banach spaces, \emph{Math. Z.} \textbf{157}, 179-200 (1977).

\bibitem{Ka83} W. Kaup, A Riemann mapping theorem for bounded
symmetric domains in complex Banach spaces, \emph{Math. Z.}
\textbf{183}, 503-529 (1983).

\bibitem{Ka2} W. Kaup, Contractive projections on Jordan
C$^*$-algebras and generalizations, \textit{Math. Scand.}
\textbf{54}, 95-100 (1984).

\bibitem{Kubis} W. Kubi\'s, Banach spaces with projectional skeleteons, \emph{J. Math. Anal. Appl.} \textbf{350},
 758-776 (2009).

\bibitem{Loos}
O.~Loos.
\newblock {\em Bounded symmetric domains and Jordan pairs}.
\newblock Lecture Notes. University of California Irvine, 1977.

\bibitem{MarPe} J. Mart{\'i}nez, A.M. Peralta, Separate weak$^*$-continuity of the triple product in dual real JB$^*$-triples, \emph{Math. Z.} \textbf{234}, 635-646 (2000).

\bibitem{Ori} J. Orihuela, On weakly Lindel\"{o}f Banach spaces, Progress in Functional Analysis, Math. Studies. North Holland 170, 279-291 (1992).

\bibitem{Pe15} A.M. Peralta, Positive definite hermitian mappings associated with tripotent elements, \emph{Expo. Math.} \textbf{33}, 252-258 (2015).

\bibitem{PePfiJMAA2016} A.M. Peralta, H.~Pfitzner,
\newblock Perturbation of $\ell_1$-copies in preduals of {JBW$^*$-triples},
\newblock \emph{J. Math. Anal. Appl.} \textbf{434}, no. 1, 149-170 (2016).

\bibitem{PeSta} A.M. Peralta, L.L. Stach{\'o}, Atomic decomposition of real ${\rm JBW}^*$-triples,
\emph{Quart. J. Math. Oxford} \textbf{52}, no. 1, 79-87 (2001).


\bibitem{Plichko79}  A.N. Plichko, The existence of a bounded M-basis in a weakly compactly generated space (Russian), \emph{Teor. Funktsii Funktsional. Anal. i Prilozhen.} No. \textbf{32}, 61-69 (1979).

\bibitem{Plichko82}  A.N. Plichko, Projection decompositions of the identity operator and Markushevich bases (Russian), \emph{Dokl. Akad. Nauk SSSR} \textbf{263}, no. 3, 543-546 (1982).

\bibitem{Plichko83}  A.N. Plichko,  Projection decompositions, Markushevich bases and equivalent norms (Russian), \emph{Mat. Zametki} \textbf{34}, no. 5, 719-726 (1983).

\bibitem{Plichko86}  A.N. Plichko, Bases and complements in nonseparable Banach spaces II (Russian),\emph{ Sibirsk. Mat. Zh.} \textbf{27}, no. 2, 149-153 (1986).

\bibitem{Ryan2002} R.A. Ryan, \emph{Introduction to tensor products of Banach spaces}, Springer-Verlag London. London, 2002.

\bibitem{Sak} S. Sakai, \emph{C$^*$-algebras and $W^*$-algebras}, Springer Verlag. Berlin (1971).

\bibitem{Sta} L.L. Stacho, A projection principle concerning biholomorphic automorphisms, \emph{Acta Sci. Math.} \textbf{44}, 99-124 (1982).

\bibitem{Tak} M. Takesaki, {\it Theory of operator algebras I}, Springer Verlag, New York (1979).

\bibitem{Val91} M. Valdivia, Simultaneous resolutions of the identity operator in normed spaces, \emph{Collect. Math.} \textbf{42}, no. 3, 265-284 (1992).


\end{thebibliography}
\end{document}